\theoremstyle{plain}
\newtheorem{thm}{Theorem}[section]
\newtheorem{prop}[thm]{Proposition}
\newtheorem{lem}[thm]{Lemma}
\theoremstyle{definition}
\newtheorem{defn}[thm]{Definition}
\theoremstyle{remark}
\numberwithin{equation}{section}
\title{A priori estimates for water waves with emerging bottom}
\author{Thibault de Poyferr\'{e}\footnote{UMR 8536 CNRS, Centre de mathématiques et de leurs applications, \'Ecole Normale Supérieure Paris-Saclay, 94235 Cachan, France. Email: thibault.de.poyferre@cmla.ens-cachan.fr}}
\date{}
\DeclareMathOperator{\dive}{div}
\renewcommand{\c}[1]{\mathcal{#1}}
\renewcommand{\b}[1]{\mathbb{#1}}
\newcommand{\nab}[1]{\nabla_{\!\!#1}}
\def\d{\,\mathrm{d}}
\def\C{\bm{\mathrm{C}}}
\def\D{\bm{\mathrm{D}}}
\def\eps{\varepsilon}
\def\Id{\mathrm{Id}}
\def\la{\left\vert}
\def\lA{\left\Vert}
\def\lb{\left[}
\def\lB{\left\{}
\def\lp{\left(}
\def\ls{\left\langle}
\def\mez{\frac{1}{2}}
\def\N{\bm{\mathrm{N}}}
\def\R{\bm{\mathrm{R}}}
\def\ra{\right\vert}
\def\rA{\right\Vert}
\def\rb{\right]}
\def\rB{\right\}}
\def\rp{\right)}
\def\rs{\right\rangle}
\def\tr{\mathrm{tr}}
\def\Z{\bm{\mathrm{Z}}}
\renewcommand{\Re}{\mathrm{Re}}
\renewcommand{\Im}{\mathrm{Im}}
\begin{document}

\maketitle

\begin{abstract}
We study the beach problem for water waves. The case we consider is a compact fluid domain, where the free surface intersect the bottom along an edge, with a non-zero contact angle.  Using elliptic estimates in domain with edges and a new equation on the Taylor coefficient, we establish a priori estimates, for angles smaller than a dimensional constant. Local existence will be derived in a following paper.
\end{abstract}

\section{Introduction}
Suppose we are given a fixed smooth simply connected domain~$\mathcal{O}$ of~$\R^n$, with~$n\geq2$. We call~$\mathcal{M}$ its boundary, which we assume to be connected. An incompressible fluid fills a time-dependent domain~$\Omega_t\subset\mathcal{O}$, delimited by~$\mathcal{M}$ and a time-dependent hypersurface~$\mathcal{S}_t$. We assume this surface to be at all times connected, and such that~$\Omega_t$ is always compact and simply connected. The part of~$\mathcal{M}$ that bounds~$\Omega_t$, called the bottom, is thus time-dependent. We denote it by~$\mathcal{B}_t$.

Our last hypothesis on the domain is that for all times the intersection between~$\mathcal{M}$ and~$\mathcal{S}_t$ is along a time-dependent compact codimension~$2$ submanifold, the water line~$\mathcal{L}_t$. This intersection is assumed to be transverse, so that in particular the contact angle along~$\mathcal{L}_t$ is bounded away from~$0$ on each compact interval of time.

The motion of the fluid is described by its velocity~$v$ with values in~$\R^n$ defined for each~$t$ in the domain~$\Omega_t$, and satisfying the incompressible Euler equations in a constant gravity field,
\begin{equation*} \tag{E} \label{E}
	\lB
	\begin{aligned}
		&\partial_tv+v\cdot\nabla v=-\nabla p-ge_n,\\
		&\nabla\cdot v=0,
	\end{aligned}
	\right.
\end{equation*}
where for each time~$t$, the function~$p:\Omega_t\rightarrow\R$ is the pressure of the fluid. The constant~$g\geq0$ measures gravity, and~$e_n$ is a fixed unitary length vector which we think of as the upward direction.
The fluid domain moves with the velocity field, and pressure at the boundary is~$0$, so that
\begin{equation*} \tag{BC} \label{BC}
	\lB
	\begin{aligned}
		&\D_t:=\partial_t+v\cdot\nabla\text{ is tangent to }\cup_t\Omega_t\subset\R^{n+1},\\
		&p(t,x)=0, \quad x\in\mathcal{S}_t.
	\end{aligned}
	\right.
\end{equation*}
Here~$\D_t$ is the material derivative, and the first condition equivalently says that the velocity of~$\mathcal{S}_t$ is given by~$\ls v, N\rs$ with~$N$ the unit outward normal to~$\c S_t$, and that $\ls v,\nu\rs=0$ with~$\nu$ the unit outward normal on~$\c M$.

At a time~$t$ and a point~$x\in\c L_t$ of the water line, the angle between~$\c B_t$ and~$\c S_t$ measured in the plane of~$\nu$ and~$N$, will be written~$\omega(x)$.

Our objective is to give a local well-posedness result for the associated Cauchy problem. In this paper, we concern ourselves with establishing a priori estimates.
The study of the water waves problem has a long story, starting with Cauchy in~\cite{CauchyWW}. The rigorous derivation of local existence in Sobolev spaces, however, was only established in~1997, by Wu (\cite{WuWPos2D,WuWPos3D}). Then a number of other proofs, improving on the regularity, the shape of the bottom, or using different approaches, appeared in the last~20 years. We only quote Beyer and Gunther in~\cite{BeyerGuntherCapDrop}, Christodoulou and Lindblad in~\cite{ChristodoulouLindbladMotFrSur},  Lannes in~\cite{LannesWellPos}, Coutand and Shkoller in~\cite{CoutandShkollerWW}, Alazard, Burq and Zuily in~\cite{AlazardBurqZuilyExwST,AlazardBurqZuilyExw/ST}, Hunter, Ifrim and Tataru in~\cite{HunterIfrimTataru2DWW}, and with vorticity, Castro and Lannes in~\cite{CastroLannesVortForm}, and Shatah and Zeng in~\cite{ShatahZeng1,ShatahZeng2,ShatahZeng3}.

However, all of those papers assume a laterally infinite ocean, where there is no contact line. One trick to study such a configuration, when the walls are vertical, is to periodize and symmetrize. This was done by Alazard, Burq and Zuily in~\cite{AlazardBurqZuilyExULoc}, for right angles, and later developed in the case of general angles by Kinsey and Wu (\cite{KinsleyWuAngled}) and then by Wu (\cite{Wuangled}). The case of a more general angle has only, to the best of our knowledge, been tackled by Ming and Wang (\cite{MingWangDNCorner}). In their paper, they study the Dirichlet to Neumann operator associated with such a configuration, in 2D, and give a complete description of its singularities at the corner.

The model of the Euler equation, with the boundary condition described above, is only an approximation of the real physical phenomenon. In practice, viscosity and surface tension are fundamental to a precise description of the motion close to the corner. Steps in this direction have been done by Guo and Tice for stability of the equilibrium in~\cite{GuoTiceStabContLin}, and by Tice and Zheng in~\cite{TiceZhengLocWPCont} for well-posedness, both for the Stokes flow.

Our theorem, stated informally, is as follows. The notation~$H^s$ is for the Sobolev spaces based on~$L^2$.
\begin{thm}
 Suppose~$\c S_t$, a~$C^2$ in time family of~$H^s$ hypersurfaces, and~$v\in C^2(H^s(\Omega_t))$, are solution of the equations.
 
 Here~$s>1+\frac n2$, and~$s<\frac12+\frac\pi{2\overline\omega}$, where~$\overline\omega>0$ is a number, such that for all~$t$, for all~$x$ in~$\c L_t$, $\omega(x)\leq\overline\omega$. Notice this implies~$\omega<\frac\pi{n+1}$.
 
 Assume also that there is a number~$a_0>0$ such that the Taylor coefficient~$a:=-\nab Np\geq a_0>0$ for all~$t$, and a number~$\underline \omega>0$ such  that~$\omega\geq\underline\omega$ for all~$t$.
 
 Then, for some energy~$E(t)=E\lp\Omega_t,v(t,\cdot)\rp$, to be defined below, and controlling~$\c S_t$ in~$H^s$ and~$v\in H^s(\Omega_t)$, there exists a time~$T>0$, depending only on the norms of the initial data, such that for all~$t$ in~$[0,T]$, 
 \[E(t)\leq E(0)+\int_0^t\c F\lp E(t')\rp\d t',\]
 where~$\c F$ is an increasing function depending only on~$\underline\omega$, $s$, $a_0$, and a neighborhood of the initial data in the rougher topology~$H^{s-\frac12}\times H^{s-\frac12}(\Omega_t)$.
\end{thm}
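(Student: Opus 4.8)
The plan is to run a geometric energy estimate directly on the moving fluid domain $\Omega_t$, since the edge $\c L_t$ prevents the usual flattening of the free surface onto a fixed reference domain; the price is that one must rely throughout on the elliptic estimates in domains with edges established above, and it is precisely their range of validity that forces the angle restriction $s<\frac12+\frac\pi{2\overline\omega}$. The first task is to fix the energy $E(t)$. I would take it as a sum of three pieces: a top-order term at regularity $s$, built from iterated tangential (along $\c S_t$) and material derivatives applied to a good unknown — the trace of $v$ on $\c S_t$ together with a parametrization of $\c S_t$, suitably corrected so that the edge elliptic estimates apply — with the leading surface and velocity contributions weighted by the Taylor coefficient $a$; an interior term $\int_{\Omega_t}|\nabla q|^2\d x$ for an auxiliary function $q$ carrying one extra material derivative of the motion; and lower-order terms controlled in $H^{s-1/2}$ that let the final differential inequality close. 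Coercivity of the top-order term uses the Rayleigh--Taylor hypothesis $a\geq a_0>0$, and the equivalence of $E(t)$ with $\lA\c S_t\rA_{H^s}^2+\lA v\rA_{H^s(\Omega_t)}^2$ up to the lower-order norms uses the edge elliptic estimates, available exactly because $s$ lies below the threshold set by the first corner exponent $\pi/2\omega$ of the mixed Dirichlet--Neumann problem for the pressure.

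Next I would record the auxiliary elliptic and transport equations. Taking the divergence of~\eqref{E} gives $-\Delta p=\tr\big((\nabla v)^2\big)$ in $\Omega_t$; by~\eqref{BC}, $p=0$ on $\c S_t$, and dotting~\eqref{E} with $\nu$ on the fixed wall and using $\ls v,\nu\rs=0$ yields a Neumann condition $\partial_\nu p=\mathrm{II}_{\c M}(v,v)-g\ls e_n,\nu\rs$ on $\c B_t$ — this is the mixed boundary value problem whose corner exponent governs the admissible $s$, and from it the edge elliptic estimates control $p$, hence $a=-\nab Np$, in terms of $E(t)$. The vorticity is transported by $\D_t$, with a stretching term in dimension $n\geq3$, which gives a direct transport estimate, and the shape of $\c S_t$ — encoded in its second fundamental form or in a defining function — evolves by a transport equation along $\D_t$ forced by tangential derivatives of $\ls v,N\rs$. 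The decisive new ingredient is an equation for $a$ itself: commuting $\D_t$ through the elliptic problem for $p$ and using~\eqref{E} to rewrite the source yields an elliptic equation for $\D_t p$, and hence, after restriction to $\c S_t$, a closed equation for $\D_t a$ whose forcing is of order strictly below that of the top-order term of $E$. This is the ``new equation on the Taylor coefficient,'' and it is what allows differentiating the $a$-weighted leading term of $E$ in time without losing a derivative.

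With these in hand, the core step is to compute $\tder E(t)$. Differentiating the leading term and substituting~\eqref{E} and the transport equations, the genuinely top-order contributions are those in which all $s$ derivatives fall on $\ls v,N\rs$ or on the curvature of $\c S_t$; these cancel against one another and against the $\D_t a$ contribution by the Rayleigh--Taylor structure, just as in the case without a contact line but now carried out intrinsically on $\Omega_t$. Every integration by parts — over $\Omega_t$ or over $\c S_t$ — leaves a boundary integral over the water line $\c L_t$, and controlling these is the heart of the estimate: one uses the lower bound $\omega\geq\underline\omega$, which makes $\c L_t$ a uniformly transverse intersection so that the traces $H^s(\Omega_t)\to H^{s-1/2}(\c S_t)\to H^{s-1}(\c L_t)$ and their analogues on $\c B_t$ are uniformly bounded, together with $\omega\leq\overline\omega$ small, which keeps the corner-singular parts of $p$, $v$ and the surface geometry near $\c L_t$ regular enough that these boundary integrals are bounded by $\c F(E)$ with no loss of a derivative. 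The remaining commutator and product terms are absorbed by $H^{s-1/2}$ estimates, giving $\tder E(t)\leq\c F(E(t))$; integrating in time yields the stated inequality, and a bootstrap on this differential inequality produces $T>0$ depending only on the data.

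The main difficulty is exactly this last point. Within the window $s<\frac12+\frac\pi{2\overline\omega}$ permitted by the edge elliptic theory one must verify that none of the many $\c L_t$-boundary terms produced by the energy computation loses a derivative — in effect, that the $r^{\pi/2\omega}$-type singular components of $p$, $v$ and the surface geometry near the water line cancel or are absorbed — and, relatedly, that the Taylor-coefficient equation can be set up coherently on the moving surface $\c S_t$ with its moving boundary $\c L_t$ and with forcing of acceptable order. Both are where the smallness of $\overline\omega$ enters, beyond its role in making the basic edge elliptic estimates available.
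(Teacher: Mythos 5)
Your proposal captures several correct ingredients: the mixed Dirichlet--Neumann problem for $p$ whose first corner exponent $\frac{\pi}{2\omega}$ sets the admissible range of $s$, the role of $a\geq a_0$ for coercivity, the transport of the vorticity, and the recognition that an evolution equation for the Taylor coefficient is the decisive new input. But the core of your energy estimate rests on a cancellation that fails here, and you leave the resulting difficulty unresolved. You build the top-order energy from $s$ tangential derivatives of a surface parametrization and of the trace of $v$, and you close by asserting that the top-order contributions, ``those in which all $s$ derivatives fall on $\ls v,N\rs$ or on the curvature of $\c S_t$, cancel against one another.'' That is the classical mean-curvature quasilinearization of Shatah--Zeng, and the paper states explicitly that it fails in this geometry (``the failure of the mean curvature to quasi-linearize the equations''). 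Relatedly, differentiating tangentially along $\c S_t$ is precisely what the paper rules out, since the problem is not translation-invariant near the emerging bottom; the whole point of Section~\ref{sec:quasilin} is to differentiate in time instead. Your acknowledged ``main difficulty'' --- that every integration by parts over $\Omega_t$ or $\c S_t$ produces a water-line integral over $\c L_t$ whose $r^{\pi/2\omega}$-type singular components must cancel or be absorbed --- is not a verification left to the reader; it is the obstruction that forces a different strategy, and your proposal offers no mechanism for the cancellation.

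The paper avoids both problems at once. The quasilinear equation is second order in time on the Taylor coefficient itself, $\D_t^2a+a\c Na=R$ with $R\in H^{s-\frac32}(\c S_t)$ (Proposition~\ref{prop:quasilin}), obtained by applying $\D_t$ twice to $\nabla p$ through its elliptic problem and only then taking the normal trace. The energy is $\int_{\c S_t}\la\c N^{s-\frac32}\D_ta\ra^2+\int_{\c S_t}a\la\c N^{s-1}a\ra^2+\lA\mu\rA^2_{H^{s-1}(\Omega_t)}$, built from fractional powers of the Dirichlet-to-Neumann operator; its time derivative is computed using the self-adjointness of $\c N$ and the commutator bounds on $\lb\D_t,\c N^\sigma\rb$ and $\lb a,\c N^{s-2}\rb$, so no integration by parts on $\c S_t$, and hence no $\c L_t$ boundary term, ever appears. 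Finally, you do not explain how $v\in H^s(\Omega_t)$ and $\la\c S_t\ra_s$ are recovered from an energy carried by $a$ and $\D_ta$: this requires the div-curl estimate (which the paper presents as new), applied successively to $\nabla p$, $\D_t\nabla p$ and $v$, together with a bootstrap in the regularity index, because $s>1+\frac n2$ alone is too weak to close the nonlinear elliptic estimates in one pass; the curvature of $\c S_t$ (and of $\c L_t$ when $n\geq3$) is then read off from $\kappa=\frac1a(\Delta p-D^2p(N,N))$. These are not cosmetic differences: without the time-differentiated Taylor equation as the quasilinear structure and the DN-based energy, the estimate does not close.
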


To state this Theorem precisely, we need to prescribe the topology on~$H^s$ hypersurfaces, which will be done in~\ref{sec:analysis}, and to define the Energy. Then Proposition~\ref{prop:controlE} gives the control of the unknowns from~$E$ and Proposition~\ref{prop:Gron} gives the time~$T$ and the estimation on the evolution of~$E$.

A few remarks are in order. First, in the classical case of a well-separated bottom and free surface, we would have the same Theorem, without the restrictions on the angle. The level of regularity, which corresponds to~$v\in C^1$ by Sobolev embedding, is the best we can find without using dispersive properties (see for example~\cite{AlazardBurqZuilyExw/ST}). Notice we do not assume the field~$v$ to be irrotational.

Second, the condition on the angle arise because of the presence of an edge in the domain. In such a domain, the elliptic regularity theory degenerates. This elliptic regularity is needed to make sense of the equations, since~$p$ solves an elliptic problem. It also comes into play often in the analysis. The allowed range for~$s$ is the one where elliptic regularity works as in smooth domains, as will be seen in Section~\ref{sec:analysis}.

Last, we expect to be able to prove local well-posedness for the same problem, under the same regularity and with the conditions on the Taylor coefficients and the angle being satisfied initially, for a time depending only on the norms of the initial data~$a_0$, and~$\underline\omega$. 

In Section~\ref{sec:Geom}, we study heuristically the infinite-dimensional geometry of the problem, derive the linearized equation, and explain its consequences on our strategy. In Section~\ref{sec:analysis}, we develop all the analytical tools needed to study moving hypersurfaces with boundaries and moving domains with edges, in particular the elliptic regularity theory. Since the problem is fully non-linear, a classical strategy to prove a priori estimates is to differentiate the equation to reduce it to a quasilinear form, which we hope to be equivalent to the original one. Usually, one would differentiate in space. However, this only work for translation-invariant equations, which is not the case of this problem. Instead, we take advantage of the time-translation invariance, and differentiate in time. This is accomplished in Section~\ref{sec:quasilin}. At last, the Energy is defined and studied in Section~\ref{sec:Energy}, where the two main Propositions are stated. 

In our analysis, we are heavily indebted to two works, from which we draw heavily. The first is the book by Dauge, \cite{DaugeEllCor}, from which we take the analysis of the elliptic problem. Our main contribution in this domain is to precise the dependence of the constants in the geometry. To the best of our knowledge, the div-curl lemma is new.
The second work is the series of three articles by Shatah and Zeng, \cite{ShatahZeng1,ShatahZeng2,ShatahZeng3}, who developed a coordinate-free framework for the analysis of the water waves problem. Although the analysis ends up being quite different, due to the failure of the mean curvature to quasi-linearize the equations, the coordinate-free framework, most of the notations, and a few computations come directly from there.

\section*{Acknowledgment}
 The author would like to sincerely thank his advisor, T. Alazard, for his support and comments, and B. Pausader for a very helpful discussion.

\section{Geometry of the problem}	\label{sec:Geom}
In this section we heuristically derive the linearized equation from the energy. In particular, we do not discuss the smoothness of the objects involved. Most of this section is from the work~\cite{ShatahZeng1} by Shatah and Zeng, where they study the case of a droplet. We show that this heuristic analysis stays valid in our case, and explain its consequences for our strategy.
\subsection{Lagrangian formulation}
Under the conditions~\eqref{BC}, the Euler equation~\eqref{E} is easily seen to admit a conserved energy
\[E_0=\int_{\Omega_t}\frac{\la v\ra^2}{2}\d x+g\int_{\Omega_t}x^n\d x,\]
where~$x_n$ is the coordinate of~$x$ along~$e^n$. We want to express~\eqref{E} as a minimizer of an associated Lagrangian, under the constraints~\eqref{BC}.

For this, we introduce the Lagrangian coordinates by solving the ODE
\[\frac{\d x}{\d t}=v(t,x),\quad x(0)=y,\]
which gives the spatial path of a fluid particle initially at position~$y\in\Omega_0$. Then we introduce for each~$t$ the diffeomorphism~$u(t,\cdot):\Omega_0\rightarrow\Omega_t$ as the flow of this ODE.
The divergence free condition on~$v$ induces that~$u$ preserves the volume, and now~$v=u_t\circ u^{-1}$.
For any vector field~$w$ on~$\Omega_t$, we write~$\bar{w}=w\circ u$ defined on~$\Omega_0$, and the chain rule implies
\begin{equation}	\label{eq:Eultder}
	\D_tw=\partial_tw+\nab vw=\bar{w}_t\circ u^{-1}.
\end{equation}
Here and in all the following, $\nab v w:=\ls v,\nabla w\rs$ where~$\ls,\rs$ is the scalar product.

A solution of the Euler equation is thus a path, starting from identity, in the infinite dimensional manifold
\[\Gamma:=\lB \Phi:\Omega_0\rightarrow\R^n\;|\;\Phi\text{ is volume preserving and }\Phi(\c B_0)\subset\c M\rB.\]
Its tangent space at the point~$\Phi$ is
\[T_\Phi\Gamma:=\lB\bar w:\Omega_0\rightarrow\R^n\;|\;\nabla\cdot w=0\text{ on }\Phi(\Omega_0)\text{ and } w\cdot\nu=0\text{ on }\Phi(\c B_0), \text{ for }w=\bar w\circ\Phi^{-1}\rB.\]
The energy now takes the form 
\[E_0=\frac12\la u_t\ra^2_{L^2(\Omega_0)}+g\b G(u):=\int_{\Omega_0}\frac{\la u_t\ra^2}2\d y+g\int_{\Omega_0}u^n\d y.\]
This suggest that the associated Lagrangian action is
\[\int\b L(u)\d t=\iint_{\Omega_0}\frac{\la u_t\ra^2}2\d y\d t-g\int\b G(u)\d t.\]

It then follows from classical variational principles that a minimizer of this action is a path~$u$ in~$\Gamma$ whose velocity~$v(t)$ should satisfy the equation
\begin{equation}	\label{eq:EulLag}
	\bar{\mathscr{D}}_tu_t+g\b G'(u)=0.
\end{equation}
Here~$\bar{\mathscr{D}}$ is the covariant derivative on~$\Gamma$ for the~$L^2$ metric. We notice that~$\Gamma$ is a submanifold of the space of diffeomorphisms from~$\Omega_0$, equipped with~$L^2$ metric, whose tangent space is simply the space of vector fields~$\bar w$ on~$\Omega_0$. Its covariant derivative along a path~$u$ is simply~$\bar w_t$, so that
we have for an element~$\bar w\in T\Gamma$ defined above the path~$u(t)$ the formula
\begin{equation}	\label{eq:cov1}
	\bar{\mathscr{D}}\bar w=\bar w_t-\mathrm{II}_u(u_t,\bar w).
\end{equation}
Here~$\mathrm{II}$ is the second fundamental form of~$\Gamma$ as a submanifold of this space of diffeomorphisms. 
\paragraph{Hodge decomposition.}
To compute~$\mathrm{II}(u_t,\bar w)$ we observe that any vector field~$X$ in~$\Omega$ can be decomposed uniquely as
\begin{equation*}
	X=w+\nabla\phi,
\end{equation*}
where~$\phi$ is defined as the solution of 
\begin{equation} \label{eq:Hodgeperp}
	\lB
	\begin{aligned}
		&\Delta\phi=\nabla\cdot X\text{ in }\Omega,\\
		&\phi\rvert_{\c S}=0,\\
		&\nab \nu\phi\rvert_{\c B}=\ls X,\nu\rs.
	\end{aligned}
	\right.
\end{equation}
 Thus~$w$ verifies
\begin{equation*} 
	\lB
	\begin{aligned}
		&\nabla\cdot w=0\text{ in }\Omega,\\
		&w\rvert_{\c S}=X\rvert_{\c S},\\
		&\ls w,\nu\rs\rvert_{\c B}=0.
	\end{aligned}
	\right.
\end{equation*}
This decomposition is easily seen to be~$L^2$ orthogonal. Thus we can identify
\begin{equation}	\label{eq:perpGamma}
	(T_\Phi\Gamma)^\perp=\lB-(\nabla\phi)\circ\Phi\;|\;\phi\rvert_{\c S}=0\rB.
\end{equation}
Keep in mind however that, since in~\eqref{eq:Hodgeperp} we define~$\phi$ by inverting the Laplace operator in a domain with corner, the parts~$w$ and~$\nabla\phi$ of the decomposition are not necessarily smooth, even if~$X$ is. 
\paragraph{Covariant derivative.}
Now coming back to~\eqref{eq:cov1}, we see that for a path~$u(t)$ in~$\Gamma$ with velocity field~$u_t=\bar v$, and a vector field~$\bar w$ defined on it, there holds
\begin{equation*}
	\bar{\mathscr{D}}_t\bar w=\bar w_t+(\nabla p_{v,w})\circ u,
\end{equation*}
where
\begin{equation*}
	\lB
	\begin{aligned}
		&\Delta p_{v,w}=-\mathrm{tr}(DvDw)\text{ in }\Omega,\\
		&p_{v,w}\rvert_{\c S}=0,\\
		&\nab \nu p_{v,w}\rvert_{\c B}=-\Pi_{\c M}(v, w),
	\end{aligned}
	\right.
\end{equation*}
with~$\Pi_{\c M}$ the second fundamental form of the bottom. This can be inferred from~\eqref{eq:Hodgeperp} by taking~$X=\bar{w}_t\circ u^{-1}=\D_tw$.

Now this is in Lagrangian coordinates, and it can be rewritten in Eulerian coordinates, using~\eqref{eq:Eultder}. This gives
\begin{equation*}
	\bar{\mathscr{D}}_t\bar w=\lp\D_tw+\nabla p_{v,w}\rp\circ u=\lp\partial_tw+\nab vw+\nabla p_{v,w}\rp\circ u.
\end{equation*}
\paragraph{Gravity force.}
We then compute~$\b G'(u)$. For any~$\bar w\in T_u\Gamma$, take a path in~$\Gamma$ indexed by~$\eps$ and starting from~$u$ at~$\eps=0$, with tangent vector at~$\eps=0$ equal to~$\bar w$. Then
\begin{align*}
	\ls\b G'(u),\bar w\rs_{L^2(\Omega_0)}&=\frac{\d}{\d\eps}\int_{u(\Omega_0)}x^n\d x\\
					     &=\int_{u(\Omega_0)}\D_\eps x^n\d x\\
					     &=\int_{u(\Omega_0)}\ls w,\nabla x^n\rs\d x\\
					     &=\int_{u(\c S_0)}w^\perp x^n\d S\\
					     &=\int_{u(\Omega_0)}\ls w,\nabla\c H\lp x^n\rvert_{u(\c S_0)}\rp\rs\d x.
\end{align*}
Here we have used the Green formula twice, and the terms on~$u(\c B)$ vanish since~$\ls w,\nu\rs=0$ there. We have replaced~$\nabla x^n$ with~$\nabla\c H\lp x^n\rvert_{u(\c S_0)}\rp$, where~$\c H$ is the harmonic extension with homogeneous Neumann condition on the bottom, so that now~$\nabla\c H\lp x^n\rvert_{u(\c S_0)}\rp\in T_u\Gamma$ and we can identify it with~$\b G'(u)$.

Then the Euler-Lagrange equation~\eqref{eq:EulLag} for our action become in Eulerian coordinates
\begin{equation}
	\partial_tv+v\cdot\nabla v=-\nabla p_{v,v}-g\nabla\c H\lp x^n\rvert_{\c S_t}\rp=-\nabla p-g e^n,
\end{equation}
with~$p=p_{v,v}+g (e^n-\c H\lp x^n\rvert_{\c S_t}\rp)$ the physical pressure. Combined with the constraint that~$v\circ u\in T_u\Gamma$ is the velocity vector of the domain, this gives the Euler equations~\eqref{E} with boundary conditions~\eqref{BC}.
\subsection{The linearized equation}
To help us study the Euler equations, we want to find a way to linearize them around a given solution, i.e. a path~$u(t)$ in~$\Gamma$, such that its velocity~$\bar v=u_t\in T_u\Gamma$ satisfies the Euler equations.
Since we see the Euler equation as a geodesic  flow with potential, the natural linearization is through the Jacobi equation. It is the equation that a time-dependent vector field~$\bar w\in T_u(t)\Gamma$ defined on the path~$u$ has to satisfy if, by moving the curve~$u$ by the flow of~$\bar w$, we want it to stay a solution of the Euler equations. Classically, this is 
\begin{equation}
	\bar{\mathscr{D}}_t^2\bar w+\bar{\mathscr{R}}(u_t,\bar w)u_t+g\bar{\mathscr{D}}^2\b G(u)\bar w=0,
\end{equation}
where~$\bar{\mathscr{R}}$ is the Riemann curvature tensor of~$\Gamma$ at the point~$u(t)$, and~$\bar{\mathscr{D}}^2\b G(u)$ is the Hessian of~$\b G$. 
We need to compute~$\bar{\mathscr{R}}(u_t,\bar w)u_t$ and~$\bar{\mathscr{D}}^2\b G(u)\bar w$, or at least their principal parts,  from their bilinear forms. To do this, we consider for a given~$\bar w\in T_u(t)\Gamma$ a family of curves~$u(t,\eps)\in\Gamma$ such that~$u(t,0)=u(t)$, and~$\partial_\eps u(t,0)=\bar w$. Then we extend~$\bar w$ to be the tangent vector in~$\eps$. 
\paragraph{The Riemann curvature}
We use the classical formula
\begin{align*}
	\ls\bar{\mathscr{R}}(\bar v,\bar w)\bar v,\bar w\rs_{L^2(\Omega_0)}&=\ls\mathrm{II}_u(\bar v,\bar v),\mathrm{II}_u(\bar w,\bar w)\rs_{L^2(\Omega_0)}-\la\mathrm{II}_u(\bar v,\bar w)\ra^2_{L^2(\Omega_0)}\\
								    &=\int_{\Omega_t}\ls\nabla p_{v,v},\nabla p_{w,w}\rs\d x-\int_{\Omega_t}\la\nabla p_{v,w}\ra^2\d x.
\end{align*}

Then assuming enough regularity on~$v$ and~$w$, we can repetitively use the Green formula and the definition of~$p_{.,.}$to find
\begin{align*}
	\int_{\Omega_t}\ls\nabla p_{v,v},\nabla p_{w,w}\rs\d x&=\int_{\Omega_t}p_{v,v}\mathrm{tr}(DwDw)\d x+\int_{\c B_t}p_{v,v}\nab \nu p_{w,w}\d S\\
							    &=-\int_{\Omega_t}\ls\nabla p_{v,v},\nab ww\rs\d x+\int_{\c B_t}p_{v,v}(\ls\nab w\nu,w\rs+\ls\nab ww,\nu\rs)\d S\\
							    &=\int_{\Omega_t}D^2p_{v,v}(w,w)\d x-\int_{\c S_t}w^\perp\nab wp_{v,v}\d S.
\end{align*}
Here we have also used the identity 
\[\ls\nab w\nu,w\rs+\ls\nab ww,\nu\rs=w\ls w,\nu\rs=0,\]
where~$w$ is taken as a derivation, because~$\ls w,\nu\rs=0$ on~$\c B$, and~$\nabla\cdot w=0$.

Then~$p_{v,v}=0$ on~$\c S_t$, and thus~$\nab wp_{v,v}=w^\perp\nab Np_{v,v}$. A last application of the Green formula gives
\[\int_{\Omega_t}\ls\nabla p_{v,v},\nabla p_{w,w}\rs\d x=\int_{\Omega_t}\ls w,\nabla\c H(-\nab Np_{v,v}w^\perp)\rs\d x+\int_{\Omega_t}D^2p_{v,v}(w,w)\d x.\]

Now the second term is expected to be more regular, so that
\begin{equation*}
	\bar{\mathscr{R}}(\bar v,\bar w)\bar v\eqsim(\mathscr{R}_0(v)w)\circ u 
\end{equation*}
where~$\mathscr{R}_0(v)w=\nabla\c H(-\nab Np_{v,v}w^\perp)$.

\paragraph{The gravity term}
To compute~$\b G''(u)$, we use the formulas~\eqref{eq:EvN} and~\eqref{eq:EvSur} for the evolutions of the normal and the surface element of a surface moving with divergence-free velocity~$w$:
\begin{align*}
	\ls\b G''(u)\bar w,\bar w\rs_{L^2(\Omega_0)}&=\partial_\eps\ls\b G'(u),\bar w\rs_{L^2(\Omega_0)}-\ls\b G'(u),\bar{\mathscr{D}}_\eps\bar w\rs_{L^2(\Omega_0)}\\
						 &=\partial_\eps\int_{\c S}x^nw^\perp\d S-\int_{\c S}x^nN\cdot(\D_\eps w+\nabla p_{w,w})\d S\\
						 &=\int_{\c S} w^nw^\perp-x^nw\cdot\lp(Dw)_*(N)\rp^\top+x^nN\cdot\D_\eps w\\
						 &\qquad-x^nw^\perp\nab Nw\cdot N-x^nN\cdot\D_\eps w-x^nN\cdot\nabla p_{w,w}\d S\\
						 &=\int_{\c S}\lp w^nw^\perp-x^n\nab ww\cdot N-x^nN\cdot\nabla p_{w,w}\rp\d S.
\end{align*}
But using repeated Green formulas give
\begin{align*}
	-\int_{\c S}x^nN\cdot\nabla p_{w,w}\d S&=-\int_\Omega\nabla\c H(x^n\rvert_{\c S})\cdot\nabla p_{w,w}+\c H(x^n\rvert_{\c S})\mathrm{tr}\lp(Dw)^2\rp\d x\\
	&\qquad+\int_{\c B}\c H(x^n\rvert_{\c S})\nu\cdot\nabla p_{w,w}\d S\\
					      &=-\int_\Omega\nabla\c H(x^n\rvert_{\c S})\cdot\nab ww\d x+\int_{\c S}x^n\nab ww\cdot N\d S\\
					      &\qquad+\int_{\c B}\c H(x^n\rvert_{\c S})(\nab ww\cdot\nu+\nab w\nu\cdot w)\d S\\
					      &=\int_\Omega D^2\c H(x^n\rvert_{\c S})(w,w)\d x+\int_{\c S}x^n\nab ww\cdot N-w^\perp\nab w\c H(x^n\rvert_{\c S})\d S.
\end{align*}
Noticing that
\[\nab w\c H(x^n\rvert_{\c S})=\nab{w^\top}x^n+w^\perp\c N(x^n\rvert_{\c S})=w^n-w^\perp N^n+w^\perp\c N(x^n\rvert_{\c S}),\]
we find
\begin{align*}
	\ls\b G''(u)\bar w,\bar w\rs_{L^2(\Omega_0)}&=\int_{\c S}\lp w^\perp\rp^2\lp N^n-\c N(x^n\rvert_{\c S})\rp\d S+\int_\Omega D^2\c H(x^n\rvert_{\c S})(w,w)\d x\\
						    &=\int_\Omega w\cdot\nabla\c H\lb\lp N^n-\c N(x^n\rvert_{\c S})\rp w^\perp\rb\d x+\int_\Omega D^2\c H(x^n\rvert_{\c S})(w,w)\d x.
\end{align*}

Again the second term is more regular, so that
\begin{equation*}
	\b G''(u)\bar w\simeq(\mathscr Gw)\circ u 
\end{equation*}

where~$\mathscr Gw=\nabla\c H\lb\lp N^n-\c N(x^n\rvert_{\c S})\rp w^\perp\rb$.

Thus the linearized equation becomes in Eulerian coordinates
\begin{equation}	\label{eq:lin}
	\mathscr{D}_t^2w+\mathscr{R}_0(v)w+g\mathscr Gw=\text{bounded terms}.
\end{equation}
we observe that both~$\mathscr R_0$ and~$\mathscr G$ are of order~$1$, and that their forms are similar. In fact, we can write
\begin{equation}	\label{eq:defRg}
	(\mathscr R_0+g\mathscr G)w=\nabla\c H(aw^\perp)=:\mathscr R_gw,
\end{equation}
where~$a$ is the Rayleigh-Taylor coefficient
\begin{equation}	\label{eq:RayTayCoef}
	a:=-\nab N\lp p_{v,v}+\c H(x^n\rvert_{\c S})-x^n\rp=-\nab Np,
\end{equation}
where~$p$ is again the physical pressure.

\paragraph{The Rayleigh-Taylor coefficient}
It can be seen on this equation that there is stability (meaning exponential decay of the solution) only if the Rayleigh-Taylor condition
\begin{equation}
	a(t,x)\geq c>0,\forall x\in\Omega_t
\end{equation}
holds for all times, with~$c$ an arbitrary positive constant.

Assuming enough regularity, we can compute~$a$ at the triple line.
There holds
\begin{equation*}
	\nab\nu p_{v,v}=\ls\nu,\nabla p_{v,v}\rs=\ls\nu^\top,\nabla^\top p_{v,v}\rs+\ls\nu, N\rs\nab Np_{v,v}=\ls\nu, N\rs\nab Np_{v,v},
\end{equation*}
because~$p_{v,v}=0$ on~$\c S$. Here, $A^\top$ refer to the tangent part at~$\c S$. On the other hand,
\begin{equation*}
	\nab\nu p_{v,v}=\ls v,\nab v\nu\rs=-\ls\nu,\nab vv\rs,
\end{equation*}
because~$\ls v,\nu\rs=0$ on~$\c B$. Therefore, assuming~$\ls\nu,N\rs\neq0$, we find
\begin{equation}	\label{eq:hydroTay}
	-\nab Np_{v,v}=\frac{\nu\cdot\nab vv}{\nu\cdot N}.
\end{equation}

A similar computation can be performed on the gravity part:
\begin{multline*}
	0=\nab\nu\c H(x^n\rvert_{\c S})=\ls\nu^\top,\nabla^\top\c H(x^n\rvert_{\c S})\rs+\ls\nu,N\rs\nab N\c H(x^n\rvert_{\c S})\\
	=\ls e^n,\nu-\ls\nu,N\rs N\rs+\ls\nu,N\rs\nab N\c H(x^n\rvert_{\c S}),
\end{multline*}
and since~$\nab N x^n=N^n$, we find
\begin{equation}	\label{eq:graTay}
	-g\nab N(\c H(x^n\rvert_{\c S})-x^n)=g\frac{\nu^n}{\ls\nu,N\rs},
\end{equation}
again assuming~$\ls\nu,N\rs\neq 0$.

Thus putting together~\eqref{eq:hydroTay} and~\eqref{eq:graTay} gives for~$\ls\nu,N\rs\neq 0$ that
\begin{equation}
	a=\frac{g\nu^n+\ls\nu,\nab vv\rs}{\ls\nu,N\rs}.
\end{equation}

To see what this means, we specialize to 2D situations, with zero velocity field. Then~$\ls\nu,N\rs=-\cos(\omega)$, with~$\omega$ the angle between the bottom and the free surface, so that the condition~$a>0$ gives the situation of an acute angle and where the water is above the bottom, which would be the case of a beach, as stable.

Of course, when the velocity field is non zero, it can counterbalance the effect of gravity and change this situation.

\section{Analysis on moving domains}	\label{sec:analysis}
In this section, we develop the norms and estimates we need for our analysis.
The main objective is to derive estimates whose constants are independent of the domain.

\subsection{Surface coordinates}
Our first objective is to give a description in coordinates of the hypersurfaces in a given~$H^{s_0}$ neighborhood. Take~$s_0>\frac{n+1}2$.
Using local coordinates, one can easily define what it means to be an~$H^r$ function on a given~$H^{s_0}$ hypersurface with boundary~$\c S$. For~$s_0>r>-s_0$, those are simply functions whose coordinates representatives are locally in~$H^r(\R^{n-1})$ for interior coordinates and~$H^r(\R^{n-1}_+)$ for boundary coordinates. Here~$\R^{n-1}_+$ is the upper half-plane, and~$H^r$ functions are simply restrictions of functions that are~$H^r$ in the whole plane. 

It is easy to see that this produce a Banach space, and a norm can be chosen by taking a covering of~$\c S$ by a finite number of coordinates patch, and an adapted partition of unity. However such a norm is dependent on those choices of coordinates, and therefore we will not use it. 
After that, one can define a topology on the space of~$H^{s_0}$ surfaces with boundary contained in our fixed bottom hypersurface~$\c M$, by saying that two are close if a diffeomorphism from one to the other is close to identity in~$H^{s_0}$ norm. It is quite easy to see that the subspace of such surfaces whose intersection with~$\c M$ is transverse is an open set, and therefore we can consider a neighborhood of a given smooth hypersurface~$\c S_*$ consisting entirely of~$H^{s_0}$ surfaces intersecting~$\c M$ transversally. By density, any hypersurface is included in one such neighborhood.

Now we will construct such a neighborhood. Take some compact, smooth, reference hypersurface~$\c S_*$,  whose intersection with~$\c M$ is transverse, and whose boundary is this intersection~$\c L_*$. We want to represent close enough surfaces as graphs over~$\c S_*$, and for this we need a good collar neighborhood of~$\c S_*$. We cannot use normal coordinates because since we want to represent surfaces with boundary contained in~$\c M$, we need to straighten~$\c M$ in some way. We accomplish this through the following lemma. Recall that~$\c O$ is the domain whose boundary is~$\c M$.
\begin{lem}	\label{lem:vectcoord}
	There exists a smooth unit vector field~$X$, defined on a neighborhood of~$\c S_*$ in~$\c O$, that is always transverse to~$\c S_*$ and always tangent to~$\c M$. 
	There exists~$\delta>0$ such that the flow of~$X$, 
	\[\phi:\c S_*\times[-\delta,\delta]\rightarrow\R^n,\]
	is a smooth diffeomorphism from its domain to a neighborhood of~$\c S_*$ in~$\c O$.
\end{lem}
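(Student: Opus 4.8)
The plan is to construct $X$ by a partition-of-unity argument adapted to the corner, and then to run the classical ``collar neighborhood'' argument: the time-$\delta$ flow of a unit vector field transverse to a compact hypersurface is a diffeomorphism from a slab onto a neighborhood of that hypersurface. The two new features here are that $\c S_*$ has a boundary lying in $\partial\c O=\c M$, and that it is precisely the transversality of $\c S_*$ and $\c M$ that allows $X$ to be simultaneously transverse to $\c S_*$ and tangent to $\c M$.

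First I would build $X$. Fix the unit normal $N_*$ of $\c S_*$ (pointing out of the fluid domain) and cover a neighborhood of the compact set $\c S_*$ in $\overline{\c O}$ by finitely many charts $U_i$, each either disjoint from $\c M$ or a boundary chart straightening $\c M$ to $\{x_n=0\}$ and $\overline{\c O}$ to $\{x_n\ge0\}$, with $U_i\cap\c S_*$ connected. On a chart disjoint from $\c M$ take $X_i$ to be any smooth extension of $N_*|_{U_i\cap\c S_*}$. On a boundary chart meeting $\c L_*$, use that transversality of $\c S_*$ and $\c M$ forces $T_p\c M\not\subset T_p\c S_*$ at each $p\in\c L_*$; hence some constant combination $\sum_{j<n}c_j\,\partial_{x_j}$ of the coordinate fields tangent to $\{x_n=0\}$ is not in $T_p\c S_*$ at a chosen base point, and, transversality being an open condition, it stays transverse to $\c S_*$ on $U_i\cap\c S_*$ after shrinking $U_i$; this field is tangent to $\c M$ everywhere on $U_i$, and I set $X_i$ equal to it. Replacing $X_i$ by $-X_i$ if necessary, we may assume $\langle X_i,N_*\rangle>0$ on $U_i\cap\c S_*$; this remains possible in a boundary chart because vanishing of $\langle X_i,N_*\rangle$ there would say $X_i\in T\c S_*$, contradicting transversality. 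With a partition of unity $\{\rho_i\}$ subordinate to $\{U_i\}$, put $Y=\sum_i\rho_iX_i$ and $X=Y/|Y|$. Since $\langle Y,N_*\rangle=\sum_i\rho_i\langle X_i,N_*\rangle>0$ on $\c S_*$, the field $Y$ is nonzero near $\c S_*$, so $X$ is a well-defined smooth unit field on a neighborhood $V$ of $\c S_*$; it is transverse to $\c S_*$ (positive $N_*$-component there), and near $\c M$ only the boundary charts contribute, so $X$ is tangent to $\c M$ near $\c M$.

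Then I would study the flow $\phi(x,t)$ of $X$. Because $X$ is tangent to $\c M$ on $V\cap\c M$, a trajectory that meets $\c M$ must, by uniqueness of solutions, lie entirely in $\c M$; hence the flow preserves $\c M\cap V$ and $(\operatorname{int}\overline{\c O})\cap V$, and is smooth up to the boundary. By compactness of $\c S_*$ there is $\delta_0>0$ so that $\phi$ is defined on $\c S_*\times[-\delta_0,\delta_0]$ with values in $\overline{\c O}$, and $\phi(\cdot,0)=\mathrm{id}_{\c S_*}$. At $(x,0)$ its differential sends $(v,s)\in T_x\c S_*\oplus\R$ to $v+sX(x)$, which is an isomorphism onto $T_x\R^n$ exactly because $X(x)\notin T_x\c S_*$; the same holds at corner points $x\in\c L_*$ with the full tangent spaces, and there $\phi$ additionally carries the face $\c L_*\times[-\delta_0,\delta_0]$ into $\c M$. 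So $\phi$ is a local diffeomorphism (of manifolds with corners) near the compact slice $\c S_*\times\{0\}$, on which it is injective; the usual extraction-of-convergent-subsequences argument then produces $\delta\in(0,\delta_0]$ on which $\phi$ is globally injective. A bijective local diffeomorphism onto its image is a diffeomorphism, and the image is a neighborhood of $\c S_*$ in $\overline{\c O}$ since, by invariance of domain for manifolds with boundary, it contains the relatively open set $\phi(\c S_*\times(-\delta/2,\delta/2))$.

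The step I expect to be the real obstacle is the local construction of $X$ near $\c L_*$: it is here that the transversality hypothesis is genuinely used, to produce a vector tangent to $\c M$ yet not tangent to $\c S_*$, and one must check that the sign normalization $\langle X_i,N_*\rangle>0$ — without which the partition-of-unity sum could degenerate — is compatible with that tangency. Once $X$ is in hand, the remainder is the classical collar/tubular-neighborhood argument, with only the mild bookkeeping that $\overline{\c O}$ and $\c S_*\times[-\delta,\delta]$ are manifolds with corners and that tangency of $X$ to $\c M$ keeps the flow inside $\overline{\c O}$.
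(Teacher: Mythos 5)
Your proof is correct and follows essentially the same route as the paper's: glue a normal field in the interior to a field tangent to $\c M$ and transverse to $\c S_*$ near $\c L_*$ (using transversality) via a partition of unity, then run the standard collar/flow argument. Your explicit sign normalization $\langle X_i,N_*\rangle>0$, ensuring the partition-of-unity sum does not degenerate, is a detail the paper's sketch leaves implicit, and is a welcome addition.
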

\begin{proof}
	One start by constructing~$X_1$, always normal to~$\c S_*$ away from its boundary. For this, we consider~$\c S_*$ only as an hypersurface with boundary of~$\R^n$. We take an open submanifold of it, which is an  hypersurface of~$\R^n$. Now we take the normal to this hypersurface, and we extend it to a neighborhood in~$\R^n$.
	
	Then in a neighborhood of~$\c L_*$ in~$\c M$, we can construct a smooth vector field~$X_2$, tangent to~$\c M$ and transverse to~$\c L_*$, by extending the normal to~$\c L_*$ in~$\c M$. We can extend it in a neighborhood of~$\c L_*$ in~$\bar{\c O}$ to a smooth vector field tangent to~$\c M$ and transverse to~$\c S_*$, since their intersection is transverse.
	
	To finish, we can cover a small enough neighborhood of~$\c S_*$ in~$\bar{\c O}$ with this neighborhood where~$X_2$ is defined, and an open set whose closure is in the interior of~$\c O$, and where~$X_1$ is well defined. At last, we can use a partition of unity to glue them smoothly to form the vector field~$X$.
	
	The existence of~$\phi$, its regularity, and the fact that it is a diffeomorphism for small enough~$\delta$ is a simple consequence of the theory of ODEs.
\end{proof}
If we fix an~$H^{s_0}$ norm on~$\c S_*$, we can express a neighborhood of it in the space of~$H^{s_0}$ surfaces with boundary in~$\c M$ by the condition that there exits a diffeomorphism~$F$ between the two satisfying
\[\lA F-\Id\rA_{H^{s_0}(\c S_*;\R^n)}<\delta_1.\]
For~$\delta_1$ small enough, this implies transversality of all the surfaces in the neighborhood. Taking again~$\delta_1$ small enough, those surfaces are all contained in the collar neighborhood we just defined. Then in those collar coordinates, again for~$\delta_1$ small, those are necessarily graphs above~$\c S_*$. Therefore, we can represent such a neighborhood by functions~$\eta_{\c S}$ defined on~$\c S_*$ with small enough~$H^{s_0}$ norms, and those give diffeomorphisms 
\begin{equation*}
	\Phi_{\c S}(p):=\phi(p,\eta_{\c S}(p))
\end{equation*}
in~$H^s(\c S_*;\R^n)$. 

All those notions can be restricted to~$\c L_*$, so that~$\c L$ is the graph of a function~$\eta_{\c L}$ which is the trace of~$\eta_{\c S}$ on~$\c L_*$, giving a diffeomorphism~$\Phi_{\c L}$ which is also the trace of~$\Phi_{\c S}$. If~$n=2$, $\c L$ is just two points, and these terms are well-defined because~$\eta_{\c S}$ is~$H^{s_0}$, and therefore continuous. For~$n\geq3$, those are traces in the Sobolev sense, and those traces are well-defined in~$H^{s_0-\frac12}(\c L_*)$ since~$s_0>\frac{n+1}2$.

\begin{defn}
	For~$\delta>0$ and~$s_0>\frac{n+1}2$, we define~$\Lambda(\c S_*,s_0,\delta)$ as the neighborhood of~$H^{s_0}$ hypersurfaces~$\c S$ such that their associated~$\eta_{\c S}$ satisfies~$\lA\eta_{\c S}\rA_{H^{s_0}(\c S_*)}<\delta$.
\end{defn}

For surfaces~$\c S$ in~$\Lambda(\c S_*,s_0,\delta)$, we can define the Sobolev norms~$H^r(\c S)$, for~$-s_0\leq r\leq s_0$, from the reference norm on~$\c S_*$.
In the analysis of the evolution problem, we will work with surfaces in~$\Lambda_*:=\Lambda(\c S_*,s-\frac12,\delta)$, with~$s>1+n/2$, and where~$\delta>0$ is small enough that all the above properties hold. However, our surfaces will really be of~$H^s$ class. The reason for this is that we do not want any smallness condition in the norm where the dynamics take place. The set~$\Lambda_*$ takes the role of a control neighborhood, and by choosing~$\c S_*$ close to the initial surface~$\c S_0$, we can treat any case. 

Since being in~$\Lambda_*$ is sufficient to have a well-defined~$\Phi_{\c S}$, we can use its~$H^s$ norm to measure the regularity of~$\c S$. More precisely, for~$\c S\in\Lambda_*$, if both~$\c S_*$ and~$\c S$ are in~$H^s$, we write
\begin{equation}
 \la\c S\ra_{s}:=\lA \Phi_{\c S}\rA_{H^s(\c S_*)}.
\end{equation}
Of course, any other choice of reference surface~$\c S_*$ provides an equivalent quantity, as soon as both are defined.

We also write
\begin{equation}
 \la\c L\ra_{s-\frac12}:=\lA \Phi_{\c L}\rA_{H^{s-\frac12}(\c L_*)}
\end{equation}
in dimension~$n\geq3$. In dimension~$n=2$, $\c L$ consists only of two point, whose positions on~$\c M$ are controlled by the condition~$\c S\in\Lambda_*$, so that we do not need to control any regularity.

The procedure to prove estimates with constants uniform in~$\Lambda_*$ is to prove them on~$\c S_*$  and then pushing them forward to~$\c S$. If we do not study norms greater than~$H^{s-\frac12}(\c S)$, this will only involve the~$H^{s-\frac12}(\c S_*)$ norms of~$\Phi_{\c S}$ and~$\Phi_{\c S}^{-1}$, which are uniformly bounded for~$\c S\in\Lambda_*$. For example, it is easy to prove the following product estimates, which will be used freely in the paper.
\begin{prop}
 If~$s>1+\frac n2$, $\c S_*$ is a reference hypersurface, $\delta$ small enough and~$\c S\in\Lambda_*$, then for any functions~$f\in H^{s_1}(\c S)$ and~$g\in H^{s_2}(\c S)$, with~$s_1\leq s_2\leq s-\frac12$, there holds
 \begin{align*}
  \lA fg\rA_{H^{s_1+s_2-\frac{n-1}2}(\c S)}\leq C\lA f\rA_{H^{s_1}(\c S)}\lA g\rA_{H^{s_2}(\c S)}&\quad\text{if }s_2<\frac{n-1}2\text{ and }0<s_1+s_2,\\
  \lA fg\rA_{H^{s_1}(\c S)}\leq C\lA f\rA_{H^{s_1}(\c S)}\lA g\rA_{H^{s_2}(\c S)}&\quad\text{if }s_2>\frac{n-1}2\text{ and }0\leq s_1+s_2.
 \end{align*}
 Here~$C$ depends only on~$\Lambda_*$.
 
 Similar inequalities hold on~$\c L$ in dimension~$n\geq3$.

\end{prop}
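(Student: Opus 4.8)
The plan is to reduce everything to the corresponding statements on the fixed reference hypersurface $\c S_*$, where the Sobolev norms are the honest local norms in $\R^{n-1}$ and $\R^{n-1}_+$, and then transport via the diffeomorphisms $\Phi_{\c S}$ and $\Phi_{\c S}^{-1}$. Concretely, for $f\in H^{s_1}(\c S)$ and $g\in H^{s_2}(\c S)$ write $\tilde f=f\circ\Phi_{\c S}$ and $\tilde g=g\circ\Phi_{\c S}$, which live in $H^{s_1}(\c S_*)$ and $H^{s_2}(\c S_*)$ with norms comparable to those of $f,g$ (the implied constants depending only on the $H^{s-\frac12}$ norms of $\Phi_{\c S}^{\pm 1}$, hence only on $\Lambda_*$). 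Then $\widetilde{fg}=\tilde f\tilde g$, and it suffices to prove the two product inequalities for $\tilde f,\tilde g$ on $\c S_*$; pushing the resulting estimate forward by $\Phi_{\c S}^{-1}$ costs another $\Lambda_*$-dependent factor, and since all the indices $s_1,s_2,s_1+s_2-\frac{n-1}2$ lie in $[-(s-\tfrac12),s-\tfrac12]$ by hypothesis, the transport is valid in both directions.

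Next I would localize on $\c S_*$: fix a finite atlas of interior and boundary charts with a subordinate partition of unity $(\chi_\alpha)$, and observe $\|uv\|_{H^r(\c S_*)}\simeq\sum_\alpha\|\widehat{\chi_\alpha u}\,\widehat{\chi_\alpha v}\|_{H^r}$ up to lower-order commutator terms coming from $\nabla\chi_\alpha$; those commutators are strictly better behaved and are absorbed. For boundary charts one uses the standard extension operator $H^r(\R^{n-1}_+)\to H^r(\R^{n-1})$, whose norm depends only on $r$ (which ranges in a fixed compact interval), so everything is reduced to the flat-space Sobolev product (Sobolev multiplication) inequalities in $\R^{n-1}$:
\begin{equation*}
	\|FG\|_{H^{s_1+s_2-\frac{n-1}2}(\R^{n-1})}\lesssim\|F\|_{H^{s_1}(\R^{n-1})}\|G\|_{H^{s_2}(\R^{n-1})}\quad\text{when }s_2<\tfrac{n-1}2,\ s_1+s_2>0,
\end{equation*}
and $\|FG\|_{H^{s_1}}\lesssim\|F\|_{H^{s_1}}\|G\|_{H^{s_2}}$ when $s_2>\tfrac{n-1}2$ and $s_1+s_2\geq 0$. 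These are classical and proved, e.g., by Littlewood–Paley/paraproduct decomposition: write $FG=\T_F G+\T_G F+\R(F,G)$ and estimate each piece, the constraint $s_1+s_2>0$ (resp.\ $\geq0$) being exactly what makes the remainder term summable, and $s_2<\frac{n-1}2$ (resp.\ $>$) controlling which regime the high–low products fall in. The line $\c L$ case in dimension $n\geq3$ is identical with $n-1$ replaced by $n-2$, using the reference norm on $\c L_*$.

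The only genuine point requiring care — and the step I would flag as the main obstacle — is checking that the change of variables $\Phi_{\c S}$ really acts boundedly on the full range $H^r$, $-(s-\tfrac12)\le r\le s-\tfrac12$, with constants uniform over $\c S\in\Lambda_*$. For $r\ge 0$ one invokes composition estimates for Sobolev functions under an $H^{s-\frac12}$ diffeomorphism, valid since $s-\tfrac12>\tfrac n2\ge\tfrac{n-1}2$ so $H^{s-\frac12}(\c S_*)$ is an algebra of Lipschitz maps; for $r<0$ one dualizes, using that $\Phi_{\c S}$ preserves the (smooth, bounded) surface measure up to an $H^{s-\frac32}$ Jacobian factor, which is itself a multiplier on the relevant negative-index spaces. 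Since $\delta$ is chosen small in Definition~\ref{defn} (or rather in the running convention fixing $\Lambda_*$), $\Phi_{\c S}$ stays within a bounded $H^{s-\frac12}$-neighborhood of a fixed diffeomorphism, so all these composition and multiplier norms are bounded by a single constant depending only on $\Lambda_*$, as claimed. Once this uniform transport is in hand, the two displayed inequalities follow immediately from their flat counterparts.
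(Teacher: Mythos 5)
Your proof follows exactly the strategy the paper indicates for this statement (the paper gives no detailed proof, only the remark that such estimates are proved on $\c S_*$ and pushed forward via $\Phi_{\c S}^{\pm1}$, whose $H^{s-\frac12}(\c S_*)$ norms are uniformly bounded on $\Lambda_*$): reduce to the fixed reference surface, localize in charts, and invoke the flat-space Sobolev multiplication laws, which is correct. One remark on the step you single out as delicate: the paper defines $H^r(\c S)$ for the whole range $-s_0\le r\le s_0$ directly ``from the reference norm on $\c S_*$'', i.e.\ by transport through $\Phi_{\c S}$, so the negative-index change of variables is definitional and your duality argument is not actually needed --- which is just as well, since as written it has an endpoint problem for $s-\frac32<\la r\ra\le s-\frac12$, where the $H^{s-\frac32}$ Jacobian factor is no longer a multiplier on $H^{\la r\ra}$.
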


\subsection{From the curvature to the surface}
Recall that the mean curvature~$\kappa$ of~$\c S$ is defined as the trace of the second fundamental form~$\Pi$, whose definition is in turn
\[\Pi(v,w)=-\ls\nab vN,w\rs\]
for~$v,w\in T\c S$. 

The regularity of the hypersurface~$\c S$ can be measured by its curvature~$\kappa$ and the curvature~$\kappa_l$ of its boundary~$\c L$ taken as a hypersurface of~$\c M$ (this is only needed in dimension greater than~$3$). 
More precisely, we have the following lemmas, distinguishing between dimension~$2$ and dimension greater than~$3$.
\begin{lem}
  For~$n=2$, take~$s>2$, a reference hypersurface~$\c S_*$, and~$\delta>0$ small enough. Assume the hypersurface~$\c S$ is in~$\Lambda_*$, and~$\kappa\in H^{s-2}(\c S)$. Then the surface~$\c S$ is actually~$H^s$, and we have the following estimates on its geometry:
 \[\la\c S\ra_s+\lA \Pi\rA_{H^{s-2}(\c S)}+\lA \N\rA_{H^{s-1}(\c S)}\leq C\lp1+\lA\kappa\rA_{H^{s-2}(\c S)}\rp.\]
\end{lem}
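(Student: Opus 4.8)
\emph{Proof strategy.} The plan is to realise $\c S$ as a graph over the reference curve $\c S_*$ in the collar coordinates $\phi$ of Lemma~\ref{lem:vectcoord}, to write its curvature as a quasilinear second-order ordinary differential operator applied to the graph function, and then to bootstrap. Since $\c S\in\Lambda_*$ we have $\c S=\Phi_{\c S}(\c S_*)$ with $\Phi_{\c S}(p)=\phi\lp p,\eta_{\c S}(p)\rp$ and $\lA\eta_{\c S}\rA_{H^{s-\frac12}(\c S_*)}<\delta$; recall that for $n=2$ the manifold $\c S_*$ is a compact interval and $\c L_*$ is two points, so no curvature of the boundary enters. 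Because $s>2$, we have $s-\frac32>\frac12$, hence $\eta_{\c S}\in H^{s-\frac12}$ and $\eta_{\c S}'\in H^{s-\frac32}$ are both continuous, $H^{s-\frac32}(\c S_*)$ is a Banach algebra, and the curvature $\kappa\in H^{s-2}(\c S)$ pulls back under $\Phi_{\c S}$ to an element of $H^{s-2}(\c S_*)$ — still denoted $\kappa$ — with $\lA\kappa\rA_{H^{s-2}(\c S_*)}\les\lA\kappa\rA_{H^{s-2}(\c S)}$, the constants depending only on $\Lambda_*$.

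Pulling back the Euclidean metric of~$\R^2$ by~$\phi$, the curvature of the graph $\sigma\mapsto\lp\sigma,\eta_{\c S}(\sigma)\rp$ in this fixed smooth Riemannian background takes the quasilinear form
\[\kappa=\c Q\lp\sigma,\eta_{\c S},\eta_{\c S}'\rp\eta_{\c S}''+\c R\lp\sigma,\eta_{\c S},\eta_{\c S}'\rp,\]
where $\c Q$, $\c R$ are smooth functions of their arguments, determined by~$\phi$ and by the geometry of~$\c S_*$. The transversality of $X$ to $\c S_*$ together with the smallness of $\lA\eta_{\c S}\rA_{H^{s-\frac12}}$, hence of $\lA\eta_{\c S}\rA_{C^1}$, ensures that $\c Q$ is bounded away from~$0$ uniformly for $\c S\in\Lambda_*$; I expect this uniform ellipticity, i.e.\ the control of $\c Q$ and $1/\c Q$ by $\Lambda_*$ alone, to be the only delicate point, in keeping with the general goal of this section. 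Granting it, we may solve $\eta_{\c S}''=\lp\kappa-\c R\rp/\c Q$.

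The remaining step is a one-shot bootstrap. By standard Moser-type composition estimates in the algebra $H^{s-\frac32}(\c S_*)$, the functions $\c R\lp\sigma,\eta_{\c S},\eta_{\c S}'\rp$ and $1/\c Q\lp\sigma,\eta_{\c S},\eta_{\c S}'\rp$ lie in $H^{s-\frac32}$ with norms bounded in terms of $\lA\eta_{\c S}\rA_{H^{s-\frac12}}<\delta$ only. Since $s-\frac32>\frac{n-1}2$ and $s-2\leq s-\frac32$, the product estimate of the preceding Proposition gives $\eta_{\c S}''=\lp\kappa-\c R\rp\cdot\lp1/\c Q\rp\in H^{s-2}(\c S_*)$ with $\lA\eta_{\c S}''\rA_{H^{s-2}}\les1+\lA\kappa\rA_{H^{s-2}(\c S)}$. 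Integrating twice on the interval $\c S_*$ — the two constants of integration being fixed by, and controlled by, the values of $\eta_{\c S}$ and $\eta_{\c S}'$ at an endpoint — yields $\eta_{\c S}'\in H^{s-1}$ and $\eta_{\c S}\in H^s$, hence $\Phi_{\c S}\in H^s(\c S_*;\R^2)$, so $\c S$ is an $H^s$ hypersurface and $\la\c S\ra_s\les1+\lA\kappa\rA_{H^{s-2}(\c S)}$. Finally the unit normal $\N$ is a smooth function of $\lp\sigma,\eta_{\c S},\eta_{\c S}'\rp$, hence lies in $H^{s-1}$, and $\Pi$ is to leading order a smooth multiple of $\eta_{\c S}''$, hence lies in $H^{s-2}$; tracking the norms gives the stated bound, with constant depending only on $\Lambda_*$.
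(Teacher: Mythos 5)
Your proof is correct, and it takes a more direct route than the paper. The paper's sketch rests on the Simons-type identity $-\Delta_{\c S}\Pi=-\c D^2\kappa+\la\Pi\ra^2\Pi-\kappa\Pi^2$ from Shatah--Zeng, transferred via $\Phi_{\c S}$ to an elliptic equation on derivatives of $\eta_{\c S}$ and closed by elliptic regularity; you instead write the curvature of the graph directly as the quasilinear second-order operator $\kappa=\c Q\,\eta_{\c S}''+\c R$, invert the (uniformly elliptic) leading coefficient, and integrate on the interval. For $n=2$ this is arguably the honest version of the argument: on a one-dimensional $\c S$ the second fundamental form reduces to its trace, so the tensorial identity degenerates to a tautology ($\la\Pi\ra^2\Pi-\kappa\Pi^2=0$ and $\Delta_{\c S}\Pi=\c D^2\kappa$), and the real content is exactly the coordinate computation you perform. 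What the paper's route buys is uniformity with the $n\geq3$ case, where $\kappa$ is only the trace of $\Pi$ and the identity is genuinely needed to recover the full tensor (together with the boundary data on $\c L$); what your route buys is the avoidance of a fourth-order equation and of any boundary conditions at the two endpoints, replacing elliptic regularity by two integrations whose constants are controlled by the $\Lambda_*$ bound. The supporting steps all check out: $H^{s-\frac32}(\c S_*)$ is an algebra since $s>2$, the Moser composition estimates apply because $\eta_{\c S},\eta_{\c S}'$ are bounded in $C^0$ by the $\Lambda_*$ condition, the product $H^{s-2}\times H^{s-\frac32}\to H^{s-2}$ is exactly the stated product proposition with $s_2=s-\frac32>\frac{n-1}2$, the lower bound on $\c Q$ follows from transversality of $X$ to $\c S_*$ plus smallness of $\delta$, and the pullback of $\lA\kappa\rA_{H^{s-2}(\c S)}$ to $\c S_*$ only uses the $H^{s-\frac12}$ norms of $\Phi_{\c S}^{\pm1}$, which are uniform in $\Lambda_*$.
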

 
 \begin{lem}
  For~$n\geq3$, take~$s>1+\frac n2$, a reference hypersurface~$\c S_*$, and~$\delta>0$ small enough. Assume the hypersurface~$\c S$ is in~$\Lambda_*$, and~$\kappa\in H^{s-2}(\c S)$, $\kappa_l\in H^{s-\frac52}(\c L)$. Then the surface~$\c S$ is actually~$H^s$, and we have the following estimates on its geometry:
 \[\la\c S\ra_s+\lA \Pi\rA_{H^{s-2}(\c S)}+\lA \N\rA_{H^{s-1}(\c S)}\leq C\lp1+\lA\kappa\rA_{H^{s-2}(\c S)}+\lA\kappa_l\rA_{H^{s-\frac52}(\c L)}\rp.\]
 We also have estimates on the geometry of~$\c L$:
 \[\la\c L\ra_{s-\frac12}+\lA \Pi_l\rA_{H^{s-2}(\c S)}+\lA n_l\rA_{H^{s-1}(\c S)}\leq C\lp1+\lA\kappa\rA_{H^{s-2}(\c S)}+\lA\kappa_l\rA_{H^{s-\frac52}(\c L)}\rp.\]
\end{lem}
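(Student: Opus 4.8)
The plan is to read the statement as an a priori elliptic regularity estimate for the quasilinear map sending a surface to its mean curvature, and to exploit crucially that a surface~$\c S$ is \emph{given} — so no existence is needed and the water-line equation and the interior equation can be decoupled. In the graph coordinates of the previous subsection, built from the collar diffeomorphism~$\phi$ of Lemma~\ref{lem:vectcoord}, I would write $\c S$ as the graph over~$\c S_*$ of $\eta:=\eta_{\c S}$ with $\lA\eta\rA_{H^{s-\mez}(\c S_*)}<\delta$; because the collar field~$X$ is tangent to~$\c M$, the boundary $\c L=\partial\c S$ is then the graph over~$\c L_*$, \emph{inside the fixed hypersurface~$\c M$}, of the trace $\eta_{\c L}:=\eta\rvert_{\c L_*}\in H^{s-1}(\c L_*)$. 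In these coordinates the mean curvature of~$\c S$ is a second-order quasilinear elliptic operator, $\c A_\eta\eta=g$, with $\c A_\eta=\lp g^{ij}_{\c S_*}+O(\nabla\eta)\rp\partial_i\partial_j+(\text{first order})$, and with $g$ equal to $\kappa$ times a smooth function of~$\nabla\eta$ plus terms that are smooth functions of $(\eta,\nabla\eta)$; similarly the curvature of~$\c L$ inside~$\c M$ is a quasilinear elliptic operator $\c B_{\eta_{\c L}}\eta_{\c L}=g_l$, built from the fixed metric of~$\c M$, with $g_l$ equal to $\kappa_l$ times a smooth function of~$\nabla\eta_{\c L}$ plus terms smooth in $(\eta_{\c L},\nabla\eta_{\c L})$. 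I would stress that $\c S_*$ is a \emph{smooth} manifold with \emph{smooth} closed boundary~$\c L_*$ — the edge of the fluid domain does not enter here — so only elliptic regularity on smooth domains is needed, not the edge theory developed later in this section.

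\emph{Step 1 (the water line).} On the closed manifold~$\c L_*$, of dimension $n-2$, the coefficients of~$\c B_{\eta_{\c L}}$ lie in $H^{s-2}$, which is a multiplier algebra there since $s>1+\frac n2$ forces $s-2>\frac{n-2}2$; by smallness of~$\delta$ the operator~$\c B_{\eta_{\c L}}$ is a small perturbation of the fixed smooth elliptic operator obtained by restricting $g^{\alpha\beta}_{\c M}\partial_\alpha\partial_\beta$ to~$\c L_*$; and $g_l\in H^{s-\frac52}$, because $\kappa_l\in H^{s-\frac52}$ and the other terms, being smooth functions of $(\eta_{\c L},\nabla\eta_{\c L})$, lie in $H^{s-2}$. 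Elliptic regularity on~$\c L_*$, absorbing the quasilinear part into the principal part by smallness, would then upgrade $\eta_{\c L}$ to $H^{s-\mez}(\c L_*)$ with $\lA\eta_{\c L}\rA_{H^{s-\mez}(\c L_*)}\les 1+\lA\kappa_l\rA_{H^{s-\frac52}(\c L)}$, with constant depending only on~$\Lambda_*$ and the geometry. Differentiating this $H^{s-\mez}$ parametrization of~$\c L$ gives the stated bounds for $\la\c L\ra_{s-\mez}$, $\Pi_l$ and~$n_l$.

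\emph{Step 2 (the free surface).} Next I would lift $\eta_{\c L}$ to some $\bar\eta\in H^s(\c S_*)$ by the trace theorem, so that $\tilde\eta:=\eta-\bar\eta\in H^{s-\mez}(\c S_*)$ has vanishing trace and solves $\c A_\eta\tilde\eta=g-\c A_\eta\bar\eta$. The coefficients of~$\c A_\eta$ lie in $H^{s-\frac32}$, a multiplier class on~$\c S_*$ since $s-\frac32>\frac{n-1}2$; and $g-\c A_\eta\bar\eta\in H^{s-2}$, because $g\in H^{s-2}$ thanks to the hypothesis $\kappa\in H^{s-2}$ and the fact that the remaining terms of~$g$ are smooth functions of $(\eta,\nabla\eta)\in H^{s-\frac32}$, while $\c A_\eta\bar\eta\in H^{s-2}$ since $\bar\eta\in H^s$. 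Elliptic regularity on the smooth manifold-with-boundary~$\c S_*$ with homogeneous Dirichlet condition — again absorbing the $O(\nabla\eta)\partial^2$ part of~$\c A_\eta$ into $g^{ij}_{\c S_*}\partial_i\partial_j$ using smallness of~$\delta$ — would then give $\tilde\eta\in H^s$, hence $\c S$ of class~$H^s$, with $\la\c S\ra_s\les 1+\lA\kappa\rA_{H^{s-2}(\c S)}+\lA\kappa_l\rA_{H^{s-\frac52}(\c L)}$. Finally $\N$ and~$\Pi$ are obtained algebraically from the first and second derivatives of the now-$H^s$ parametrization $\Phi_{\c S}$ — products and quotients of $H^{s-1}$ functions, legitimate since $s-1>\frac{n-1}2$ — which yields the two remaining terms. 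All constants end up depending only on~$\Lambda_*$ and the fixed geometry of~$\c M$ and~$\c S_*$.

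\emph{Main difficulty.} The subtle point is not the decoupling but the scaling-critical location of the quasilinear terms: the coefficient $O(\nabla\eta)$ multiplies $\partial^2\eta$, which lives at exactly the same regularity, so one cannot gain by merely freezing coefficients and iterating. The missing half-derivative is bought instead from two sources: the hypotheses $\kappa\in H^{s-2}$ and $\kappa_l\in H^{s-\frac52}$, which are half a derivative above the generic level for an $H^{s-\mez}$ surface and so place $g$ and~$g_l$ high enough; and the smallness built into~$\Lambda_*$, which turns $\c A_\eta$ and~$\c B_{\eta_{\c L}}$ into genuine perturbations of fixed smooth elliptic operators and makes the estimate essentially a Neumann series. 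Without this smallness the argument would require a paraproduct decomposition and a careful bootstrap; with it, and because there is no edge, the uniformity of the constants over~$\Lambda_*$ — the delicate issue in the elliptic problems on the fluid domain treated later — comes for free.
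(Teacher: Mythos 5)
Your proof is correct in outline and shares the paper's two-step architecture (first recover $\c L$ inside the fixed manifold $\c M$, then recover $\c S$ over $\c S_*$ using $\c L$ as boundary data), but the engine is genuinely different. The paper takes the second fundamental form as the unknown, via the Simons-type identity $-\Delta_{\c S}\Pi=-\c D^2\kappa+\la\Pi\ra^2\Pi-\kappa\Pi^2$ and its analogue for $\c L\subset\c M$, transferred by $\Phi_{\c S},\Phi_{\c L}$ to an elliptic equation on second derivatives of $\eta$; you instead write the mean curvature of the graph directly as a quasilinear second-order operator $\c A_\eta\eta=g$ and do elliptic regularity on $\eta$ itself. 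The derivative count is identical ($\Pi\sim\partial^2\eta$, $\c D^2\kappa\sim\partial^2$ of your source), and both routes hit the critical issue you isolate: the principal coefficients, being smooth functions of $\nabla\eta\in H^{s-\frac32}$, sit exactly one derivative below the target regularity, so freezing and iterating gains nothing. Your resolution by absorption — using the $\Lambda_*$-smallness of $\lA\eta\rA_{H^{s-\frac12}}$ together with the product law $H^{s-\frac32}\times H^{s-2}\to H^{s-2}$, valid since $s-\tfrac32>\tfrac{n-1}2$ — is sound, and it has two merits over the paper's sketch: it makes explicit where the uniformity of $C$ over $\Lambda_*$ comes from, and it makes the decoupling of the water-line problem transparent (the collar field $X$ being tangent to $\c M$, the curve $\c L$ is the graph of the trace $\eta\rvert_{\c L_*}$ inside the \emph{fixed} hypersurface $\c M$, so Step~1 is self-contained and then supplies clean $H^{s-\frac12}$ Dirichlet data for Step~2, a point the paper leaves implicit when it says "keeping in mind the regularity of $\c L$ as boundary data"). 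One caveat: the absorption inequality $\lA\eta\rA_{H^s}\leq C\lA g\rA_{H^{s-2}}+C\delta\lA\eta\rA_{H^s}+\dots$ is an a priori estimate, so to get the qualitative conclusion that $\c S$ is \emph{actually} $H^s$ — rather than bounded in $H^s$ if it already lies there — you still need a regularization or difference-quotient step at each stage; the paper's "standard" proof elides this as well, but your write-up should flag it since your Neumann-series phrasing suggests the membership comes for free.
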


\begin{proof}
The proof is standard, and we only give a sketch of it.
 It rests on the identity
 \begin{equation}
  -\Delta_{\c S}\Pi=-\c D^2\kappa+\la\Pi\ra^2\Pi-\kappa\Pi^2,
 \end{equation}
 which is proved in~\cite{ShatahZeng1} and stays valid for a hypersurface with boundary. For the case~$n\geq3$, the same identity holds for~$\c L$ in~$\c M$. 
 Then one can use~$\Phi_{\c L}$  to transfer it to an elliptic equation on some derivatives of~$\eta_l
$, and use elliptic regularity to find the regularity of~$\c L$ and the above estimates. Using again~$\Phi_{\c S}$ and elliptic estimates, this time in domains with boundary, keeping in mind the regularity of~$\c L$ as boundary data, we find the regularity of~$\c S$ and the estimates.
In dimension~$n=2$, we only need to use the identity on~$\c S$, since as remarked above, the boundary data consists only of two point whose range are bounded by the condition~$\c S\in\Lambda_*$.

\end{proof}

The advantage of this lemma is that now to control the regularity of~$\c S$, we only need to control~$\kappa$ and~$\kappa_l$, which are invariantly defined.

\subsection{Internal coordinates}
We can easily define Sobolev norms on~$\Omega$ by considering Sobolev functions as restrictions of functions Sobolev on~$\R^n$. Then
\[\lA u\rA_{H^r(\Omega)}=\inf\lB\lA U\rA_{H^r(\R^n)};u=U\rvert_{\Omega}\rB.\]
This way, the constants of Sobolev embeddings theorems are independent of the domain~$\Omega$.
Also, if $\Omega_t$ is a continuous one-parameter family of such domains, we can use that to define the classes~$C^k(H^r(\Omega_t))$ of functions $k$-differentiable in~$t$ with values in~$H^r(\Omega_t)$, simply by requiring that an extension of the function to~$\R^n$ be~$C^k$ in time with value in~$H^r(\R^n)$. It is easy to see that any other reasonable definition of~$C^k(H^r(\Omega_t))$ coincides with this one, which is in particular independent of the chosen (continuous) extension operator.

Our objective in this section is to construct a diffeomorphism from~$\Omega$ to~$\Omega_*$ with maximal regularity. As can be seen from the boundaryless case, any construction based on an affine change of variable would be only of~$H^{s}$ regularity, while we want it to be~$H^{s+\frac12}$. As we will see, the existence of this diffeomorphism is a consequence of Sobolev extension theorems in domains with edges. All of those are based on the following theorem in the model case of the quarter-space.
\begin{lem}	\label{lem:extmodel}
	For~$m\in\N_*$, the mapping~$u\mapsto\lB(f_k)_{0\leq k\leq m-1},(g_l)_{0\leq l\leq l-1}\rB$ defined by
	\[f_k=\partial_z^ku\rvert_{x=0},\;g_l=\partial_x^lu\rvert_{z=0}\]
	for~$u\in C^\infty(\overline{\R^+\times\R^+\times\R^{n-2}})$ has a unique continuous extension from~$H^m(\R^+\times\R^+\times\R^{n-2})$ onto the subspace of 
	\[\prod_{k=0}^{m-1}H^{m-k-\frac12}(\R^+\times\R^{n-2})\times\prod_{l=0}^{m-1}H^{m-l-\frac12}(\R^+\times\R^{n-2})\]
	defined by
	\begin{itemize}
		\item $\partial_x^lf_k(0)=\partial_z^kg_l(0)$, $l+k<m-1$ and
		\item $\int_0^1\lA\partial^l_xf_k(t)-\partial_z^kg_l(t)\rA_{L^2(\R^{n-2})}^2\frac{\d t}{t}<+\infty$, $l+k=m-1$.
	\end{itemize}
	It has a continuous right inverse, the extension operator.
\end{lem}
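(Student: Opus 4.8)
This is the trace theorem for the Sobolev space $H^m$ on a dihedral wedge — the corner analogue of the half-space trace theorem — and it underlies the corner elliptic analysis of \cite{DaugeEllCor}. The plan is to prove it in two parts: the (routine) continuity of the trace map together with the necessity of the compatibility conditions, and then the (less routine) construction of the bounded extension operator. Write $(x,z,y)$ for the coordinates on $\R^+\times\R^+\times\R^{n-2}$; the two faces $\{x=0\}$ and $\{z=0\}$ are copies of $\R^+\times\R^{n-2}$ and meet along the edge $\R^{n-2}$. A partial Fourier transform in $y$ reduces everything to a two-dimensional problem depending on the tangential frequency as a parameter — equivalently, $y$ enters below only through the Bessel multiplier $\langle D\rangle$ — so I describe the construction as if $n=2$.

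First, continuity and necessity. Extending $u\in H^m$ in $x$ to $H^m$ of the half-space $\{z>0\}$ (a fixed Stein extension, uniform in the remaining variables) and applying the ordinary half-space trace theorem gives $f_k\in H^{m-k-\frac12}$, and symmetrically $g_l\in H^{m-l-\frac12}$, with norms $\lesssim\|u\|_{H^m}$; in particular the trace map is continuous, and uniqueness of its continuous extension follows from density of smooth functions. For the compatibility conditions one argues by density: for smooth $u$ both sides of each relation equal the same mixed partial derivative of $u$ restricted to the edge, and when $l+k<m-1$ all intermediate traces live in Sobolev spaces of positive order, so the identity survives the $H^m$-limit. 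When $l+k=m-1$, $v:=\partial_x^l\partial_z^ku$ belongs only to $H^1$ of the wedge and its two face-restrictions have no $L^2$ trace on the edge; writing $v\rvert_{x=0}(t,\cdot)-v\rvert_{z=0}(t,\cdot)$ as the integral of $\partial_xv,\partial_zv$ along the broken segment $(0,t)\to(t,t)\to(t,0)$ and applying Cauchy--Schwarz gives
\[\int_0^1\lA v\rvert_{x=0}(t,\cdot)-v\rvert_{z=0}(t,\cdot)\rA_{L^2(\R^{n-2})}^2\,\frac{\d t}{t}\lesssim\lA\partial_xv\rA_{L^2}^2+\lA\partial_zv\rA_{L^2}^2<\infty,\]
which is the stated Dini condition. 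This $H^{1/2}_{00}$ phenomenon is precisely what rules out a simpler list of compatibility conditions.

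For the extension operator, recall the one-face lifting: given $(h_j)_{0\le j\le m-1}$ with $h_j$ in $H^{m-j-\frac12}$ of a hyperplane, the operator $\mathcal E(h)(t,\cdot):=\sum_{j}\frac{t^j}{j!}\chi(t\langle D\rangle)h_j$, with $\chi\in C_c^\infty$ equal to $1$ near $0$ (so all its derivatives vanish at $0$), satisfies $\partial_t^j\mathcal E(h)\rvert_{t=0}=h_j$, and a Plancherel identity combined with the rescaling $t\mapsto t/\langle\xi\rangle$ shows $\mathcal E$ maps boundedly into $H^m$ of the half-space. Apply $\mathcal E$ in the $x$-variable to arbitrary Sobolev extensions of the $f_k$ across the edge, obtaining $u_1$ with $\partial_x^ku_1\rvert_{x=0}=f_k$, and set $g_l':=g_l-\partial_z^lu_1\rvert_{z=0}$. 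Since $\partial_x^kg_l'\rvert_{x=0}=\partial_x^kg_l\rvert_{x=0}-\partial_z^lf_k\rvert_{z=0}$, the compatibility hypotheses force the $x$-traces of $g_l'$ to vanish for $k\le m-l-2$ and $\partial_x^{m-l-1}g_l'$ to satisfy the homogeneous Dini condition; hence, by the characterization of $H^s_0$ inside $H^s$ on a half-space (extension by zero across the hyperplane is bounded on $H^s$ exactly under these vanishing/Dini conditions), the zero-extension $G_l$ of $g_l'$ lies in $H^{m-l-\frac12}$ of the full hyperplane, with controlled norm.

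The remaining step — and the only delicate one — is to lift $(G_l)$ in $z$ to a function $u_2$ having these $z$-traces \emph{and} vanishing $x$-traces at $\{x=0\}$, so that $u:=(u_1+u_2)\rvert_{\R^+\times\R^+\times\R^{n-2}}$ is a bounded right inverse of the trace map. The difficulty is that $\mathcal E$ in $z$ does not preserve vanishing at $\{x=0\}$, its multiplier being non-local in $x$. I would resolve this by the standard corner construction: lift $G_l$ by $\mathcal E$ in $z$, read off the $x$-traces of the result — which by the edge-vanishing of $G_l$ are themselves admissible edge-vanishing data — subtract their $\mathcal E$-lift in $x$, and either iterate (the successive errors become more vanishing at the edge, so the corrections sum in $H^m$) or, equivalently, assemble from the outset a wedge-adapted lifting out of the edge jet — well defined up to order $m-2$, with the borderline $H^{1/2}_{00}$ datum at order $m-1$ — together with one-face corrections, i.e. a Whitney-type extension across the edge. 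Tracking constants through all stages yields boundedness of the extension operator, and the surjectivity of the trace map onto the compatible subspace follows; keeping the $H^{1/2}_{00}$ bookkeeping exact through the subtractions at the critical index $l+k=m-1$ is where all the care is needed.
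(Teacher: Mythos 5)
The paper does not actually prove this lemma: it is dispatched in one line as ``a trivial extension of Theorem~4.3 in \cite{MingWangDNCorner}'', the extension being the addition of the tangential variables $y\in\R^{n-2}$, which you correctly observe is absorbed by a partial Fourier transform in $y$. So your proposal is not so much a different route as the only route offered: you reconstruct the standard proof of the quarter-space trace theorem (Grisvard's Theorem~1.5.2.4, which is also what Ming and Wang's Theorem~4.3 rests on). Your treatment of continuity, of the necessity of the pointwise compatibility conditions for $l+k<m-1$, of the broken-path/Cauchy--Schwarz derivation of the Hardy--Dini condition at $l+k=m-1$, and of the reduction of the surjectivity to the zero-extension characterization of $\tilde H^{m-l-\frac12}$ on a half-line are all correct and are exactly the standard arguments. (You have also silently corrected the typos in the statement: as written, $f_k=\partial_z^k u\rvert_{x=0}$ is a tangential derivative of $f_0$ and the compatibility condition $\partial_x^l f_k(0)$ does not typecheck; the intended traces are the normal derivatives on each face, which is what you prove the theorem for.)

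The one soft spot is the final two-face lifting, which you rightly flag as the only delicate step but then leave as a choice between an iteration whose convergence you assert without proof and an unspecified Whitney-type construction. The iteration does not obviously converge: each correction is again of unit size in the relevant trace norm, and nothing in your argument produces a geometric gain. The clean way to close this, and the one that makes the iteration unnecessary, is to use for the second lifting (of the zero-extended data $G_l$, supported in $\{x\ge0\}$) an operator of the form $u_2(x,y,z)=\sum_l\frac{z^l}{l!}\bigl(K_z*G_l\bigr)(x,y)$ where $K_z=z^{-(n-1)}K(\cdot/z)$ and the kernel $K$ is supported in $\{x\ge0\}$ with the appropriate moment conditions. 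Such a lifting is bounded into $H^m$ of the half-space $\{z>0\}$ by the same scaling argument as for $\c E$, reproduces the $z$-traces, and preserves the support condition $\{x\ge0\}$; since $u_2\in H^m$ vanishes identically on $\{x<0\}$, all its $x$-traces of order $\le m-1$ vanish, so $u_1+u_2$ restricted to the quadrant is the desired right inverse in one pass. With that replacement your proof is complete and is, in substance, the proof the cited reference relies on.
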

This is a trivial extension of theorem~4.3 in~\cite{MingWangDNCorner}.

With smooth local charts for the manifold with corner~$\Omega_*$, we can transfer results on the quarter-space to results close to~$\c L_*$.
One such example is the Sobolev extension theorem, used in the following Proposition on global coordinates.
\begin{prop}	\label{prop:globcoord}
	For~$\delta>0$ small enough, and~$s>1+\frac n2$, for any~$\c S\in\Lambda_*$, there exists a global diffeomorphism~$\Phi_\Omega$ from~$\Omega_*$ to~$\Omega$, restricting to~$\Phi_{\c S}$ on~$\c S_*$, and satisfying
	\[\lA\Phi_{\Omega}\rA_{H^{s}(\Omega_*)}+\lA\Phi_{\Omega}^{-1}\rA_{H^{s}(\Omega)}\leq C,\]
	with~$C$ uniform in~$\Lambda_*$.
	
	Furthermore, if~$\c S_*$ and~$\c S$ are both in~$H^s$, then
	\[\lA\Phi_{\Omega}\rA_{H^{s+\frac12}(\Omega_*)}+\lA\Phi_{\Omega}^{-1}\rA_{H^{s+\frac12}(\Omega)}\leq C\lb1+\la \c S\ra_s\rb.\]
\end{prop}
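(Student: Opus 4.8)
The plan is to build $\Phi_\Omega$ in three nested layers: first straighten the bottom and fix a collar, then interpolate a boundary graph into the interior using a carefully chosen extension operator, and finally glue with a partition of unity. Concretely, I would start from the collar diffeomorphism $\phi$ of Lemma~\ref{lem:vectcoord}, which identifies a neighborhood of $\c S_*$ in $\c O$ with $\c S_*\times[-\delta,\delta]$, with $\c M$ corresponding to $\c L_*\times[-\delta,\delta]$ and $\c S_*$ to $\c S_*\times\{0\}$. In these coordinates the target surface $\c S\in\Lambda_*$ is the graph $z=\eta_{\c S}(p)$, and I seek a diffeomorphism of $\c S_*\times[-\delta/2,0]$ onto the region between $\c S_*$ and this graph, of the form $(p,z)\mapsto(\Psi(p,z),\Theta(p,z))$, that equals $\Phi_{\c S}$ (i.e.\ $z=\eta_{\c S}(p)$) on $\c S_*$, equals the identity on $\c S_*\times\{-\delta/2\}$, preserves the bottom ($\c L_*\times[-\delta,\delta]$ must map into itself, which forces $\Psi(p,z)\in\c L_*$ whenever $p\in\c L_*$), and is as regular as possible. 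Away from a neighborhood of $\c L_*$ this is the classical boundaryless problem: one extends $\eta_{\c S}$ (an $H^{s_0}$, or $H^s$, function on $\c S_*$) off the boundary into the half-line variable $z$, and the standard trace/extension theorem gains half a derivative, yielding an interior map in $H^{s+\frac12}$ when $\c S$ is $H^s$, and uniformly in $H^s$ when $\c S$ is merely $H^{s-\frac12}$, i.e.\ in $\Lambda_*$.

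The substance is what happens near $\c L_*$, and this is exactly where Lemma~\ref{lem:extmodel} is designed to be used. In a smooth local chart the corner $\Omega_*$ looks like the quarter-space $\R^+_x\times\R^+_z\times\R^{n-2}$, with $\{x=0\}\leftrightarrow\c M$ and $\{z=0\}\leftrightarrow\c S_*$; the deformation of $\c S_*$ to $\c S$ is encoded by prescribing the boundary data of the sought map on the two faces — on $\{z=0\}$ it is determined by $\Phi_{\c S}$ (with its trace $\Phi_{\c L}$ on $\{x=z=0\}$), and on $\{x=0\}$ it is determined by the requirement that the bottom face be sent into itself together with the normalization on the bottom. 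I would check that these two families of Dirichlet-type data satisfy the compatibility conditions of Lemma~\ref{lem:extmodel} (the pointwise matching $\partial_x^l f_k(0)=\partial_z^k g_l(0)$ for $l+k<m-1$ and the Dini-type integral condition at the top level $l+k=m-1$): the pointwise conditions hold because both faces' data restrict on the edge to the same thing, namely $\Phi_{\c L}$ and its derivatives, and the borderline integral condition is where the extra half-derivative of $\c S$ over $\c L$ — encoded in $\Lambda_*$ only through $H^{s-\frac12}(\c L_*)$ regularity — is precisely enough to make the integral finite (for the $H^{s+\frac12}(\Omega_*)$ statement one uses instead that $\c S$ is $H^s$, hence $\c L$ is $H^{s-\frac12}$, matching $m=s+\frac12$). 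Then the extension operator of Lemma~\ref{lem:extmodel} produces the local map with the claimed regularity, and its norm is controlled linearly by $1+\la\c S\ra_s$ (resp.\ bounded uniformly on $\Lambda_*$) because the extension operator is linear and bounded and the chart transition maps are fixed smooth maps.

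Having the near-edge pieces and the interior/away-from-edge pieces, I would patch them together with a fixed smooth partition of unity subordinate to the covering used above — one piece supported near $\c L_*$ where the Lemma~\ref{lem:extmodel} construction lives, finitely many interior-collar pieces, and one piece supported well inside $\Omega_*$ away from all of $\partial\Omega_*$ where I simply take the identity (or an affine interpolation to the correct behavior on $\c S_*$). The resulting $\Phi_\Omega$ is a smooth-coefficient combination of maps each close to the identity (using $\delta$, hence $\delta_1$, small) in the relevant norm, so $\Phi_\Omega$ itself is close to the identity in $H^s(\Omega_*)$, hence a genuine diffeomorphism onto its image $\Omega$, and the bound on $\Phi_\Omega^{-1}$ follows from the inverse function theorem together with the composition and product estimates in $H^s$ (valid since $s>1+n/2>n/2$, so $H^s$ is an algebra on a bounded Lipschitz domain). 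The restriction-to-$\c S_*$ equals $\Phi_{\c S}$ by construction since every piece was normalized to agree with $\Phi_{\c S}$ on $\c S_*$.

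The main obstacle — and the place requiring genuine care rather than bookkeeping — is verifying the compatibility and especially the borderline Dini-type condition of Lemma~\ref{lem:extmodel} for the boundary data built from $\Phi_{\c S}$ and the bottom-preservation constraint, and tracking that the constant there is \emph{linear} in $\la\c S\ra_s$ (respectively uniform over $\Lambda_*$). Subtleties: the two faces' data are not independent — the bottom-face data is partly slaved to the requirement $\Psi(\c L_*\times[-\delta,\delta])\subset\c M$, which in chart terms is a nonlinear constraint, so one must choose the bottom-face boundary datum by hand (e.g.\ as a fixed smooth retraction composed with the edge data) to keep it linear in $\c S$; and the half-derivative bookkeeping must line up the integer $m$ in Lemma~\ref{lem:extmodel} with the non-integer $s$ via the usual interpolation/complex-interpolation argument, so strictly one should either restate Lemma~\ref{lem:extmodel} for real $m$ or invoke interpolation between two integer levels, checking both compatibility conditions survive. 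Everything else — the collar from Lemma~\ref{lem:vectcoord}, the partition of unity, the algebra and inverse estimates — is routine.
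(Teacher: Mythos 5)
Your architecture is the same as the paper's: collar coordinates from Lemma~\ref{lem:vectcoord}, boundary data on the two faces $\c S_*$ and $\c B_*$, the quarter-space extension of Lemma~\ref{lem:extmodel} in local charts near $\c L_*$ glued by a partition of unity, and a diffeomorphism conclusion by closeness to the identity. The compatibility discussion (pointwise matching at the edge through $\Phi_{\c L}$, Dini condition at the top order) is also where the paper locates the work.

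There is, however, one step that fails as you wrote it: the construction of the bottom-face datum. To get $\Phi_\Omega\in H^{s+\frac12}(\Omega_*)$ from Lemma~\ref{lem:extmodel} you need the Dirichlet datum on $\c B_*$ to lie in $H^{s}(\c B_*)$, i.e.\ at the trace regularity $m-\frac12$ with $m=s+\frac12$. But its restriction to the edge is forced to be $\Phi_{\c L}$, which is only $H^{s-\frac12}(\c L_*)$. Your proposed choice --- ``a fixed smooth retraction composed with the edge data'' --- produces a function on $\c B_*$ that is essentially constant along the fibres of the retraction, hence only $H^{s-\frac12}(\c B_*)$; you lose exactly the half derivative you need, and the extension lemma then only yields $H^{s}(\Omega_*)$, not $H^{s+\frac12}(\Omega_*)$. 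The paper's fix is to \emph{gain} the half derivative when passing from $\c L_*$ to $\c B_*$: extend $\eta_{\c L}\in H^{s-\frac12}(\c L_*)$ to $\eta_{\c B}\in H^{s}(\c B_*)$ by a harmonic (Poisson-type) extension, and set $\Phi_{\c B}(p)=\phi(p,\eta_{\c B}(p))$ where $\phi$ is the flow of the vector field $X$ of Lemma~\ref{lem:vectcoord}, extended to a smooth vector field tangent to all of $\c M$. This single construction simultaneously delivers the maximal regularity $H^{s}(\c B_*)$ of the bottom datum, the correct trace $\Phi_{\c L}$ on $\c L_*$, and the constraint that $\c B_*$ is mapped into $\c M$ (because $X$ is tangent to $\c M$), so the nonlinear bottom-preservation constraint you worried about is handled linearly in $\eta$. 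With that replacement your argument goes through; the remaining points you flag (non-integer order in Lemma~\ref{lem:extmodel}, the borderline Dini condition) are glossed over by the paper as well.
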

\begin{proof}
	As stated, we want the boundary value for~$\Phi_\Omega$ to be~$\Phi_{\c S}$ on~$\c S_*$. On~$\c B_*$, which is a compact hypersurface with boundary~$\c L_*$, we need it to restrict to an~$H^s$ diffeomorphism to~$\c B$, with value~$\Phi_{\c S}\rvert_{\c L_*}$ on~$\c L_*$. Since this~$\Phi_{\c S}\rvert_{\c L_*}$ is only~$H^{s-\frac12}$, we need a diffeomorphism of maximal regularity. To define such a diffeomorphism, we use the following construction. 
	Recall that we have constructed in Lemma~\ref{lem:vectcoord} a smooth vector field~$X$ whose restriction to~$\c M$ is a tangent vector field in a neighborhood of~$\c L_*$, and such that for~$p\in\c L_*$, $\Phi_{\c S}(p):=\phi(p,\eta_{\c L}(p))$ with~$\phi$ the flow of~$X$, and~$\eta_{\c L}$ an~$H^{s-\frac12}$ function on~$\c L^*$. We can extend~$X$ to a smooth tangent vector field on the whole of~$\c M$ by gluing it to the null vector field using a partition of unity. Then we can extend~$\eta_{\c L}$ to an~$H^s$ function~$\eta_{\c B}$ on~$\c B_*$, using for example a harmonic extension. Then defining~$\Phi_{\c B}(p):=\phi(p,\eta_{\c B}(p))$, with~$\phi$ still the flow of our extended vector field, we get the promised diffeomorphism. Since~$\Phi_{\c S}$ is close to identity and we have extended~$X$ by the null vector field, this diffeomorphism is close to identity.
	
	Then we want to construct~$\Phi_\Omega$ as 
	\[\Phi_\Omega:=\Id+E(\Phi_{\c S}-\Id,\Phi_{\c B}-\Id),\]
	where~$E(a,b)$ is a Sobolev extension of~$(a,b)$ in~$\Omega$. Using~$H^{s+\frac12}$ local coordinates and a partition of unity, we only have to construct such an extension in the model case of the half-plane, which is trivial, and of the quarter-space, where we want to use Lemma~\ref{lem:extmodel}. We only need to prove that we can find the~$f_k,g_l$, with~$f_0=\Phi_{\c S}-\Id$, $g_0=\Phi_{\c B}-\Id$, and with the compatibility conditions. The only condition needed between~$\Phi_{\c S}-\Id$ and~$\Phi_{\c B}-\Id$ is their equality at~$\c L_*$. Then finding the other~$f_k,g_l$ is a simple exercise.
	Continuity of~$E$ and smallness of the boundary values give us that~$\Phi_\Omega$ is a diffeomorphism satisfying the conclusions of the Proposition.
\end{proof}

We will also need product estimates, which have exactly the same form as the one on~$\c S$. Again they will be used liberally.

\subsection{Elliptic regularity}
Our next point of order is to study two operators that appear frequently in the analysis.
The first is the harmonic extension operator~$\c H$, which takes a function~$f\in H^{\sigma+\frac12}$, $0\leq \sigma\leq s-\frac12$, on~$\c S$ to the solution of
\begin{equation*} 
	\lB
	\begin{aligned}
		&\Delta\c H(f)=0\text{ in }\Omega,\\
		&\c H(f)\rvert_{\c S}=f,\\
		&\nab\nu\c H(f)\rvert_{\c B}=0.
	\end{aligned}
	\right.
\end{equation*}
The second one takes an~$H^{\sigma-1}$ function~$g$ on~$\Omega$ and an~$H^{\sigma-\frac12}$ function~$h$ on~$\c B$ to the solution~$q=:\Delta^{-1}(g,h)$ of
\begin{equation*} 
	\lB
	\begin{aligned}
		&\Delta q=g\text{ in }\Omega,\\
		& q\rvert_{\c S}=0,\\
		&\nab\nu q\rvert_{\c B}=h.
	\end{aligned}
	\right.
\end{equation*}
We would like to prove that those mapping are continuous with value in~$H^{\sigma+1}(\Omega)$. These are both particular cases of the more general problem of finding the regularity of the solution~$u$ of the problem
\begin{equation} 	\label{eq:ellprob}
	\lB
	\begin{aligned}
		&\Delta u=g\text{ in }\Omega,\\
		&u\rvert_{\c S}=f,\\
		&\nab\nu u\rvert_{\c B}=h,
	\end{aligned}
	\right.
\end{equation}
with~$(f,g,h)\in H^{\sigma+\frac12}(\c S)\times H^{\sigma-1}(\Omega)\times H^{\sigma-\frac12}(\c B)$, where~$0\leq \sigma\leq s-\frac12$, with~$h=0$ if~$\sigma<\frac12$ because it would not be defined in a strong sense.

For future reference, we give the full existence and regularity theory for those problems, and not only the a priori estimates.

In this endeavor, we are faced with two challenges. The first is technical: in order to use our estimates in the evolution problem, our constants have to be of the form~$C(1+\la\c S\ra_s)$, with~$C$ uniform in~$\Lambda_*$. To solve this, we use the global coordinates~$\Phi_{\Omega}$ defined in Proposition~\ref{prop:globcoord} to pull back the problem to~$\Omega_*$, which gives us a family of problems with coefficients bounded by a constant of the form we want. Then we prove a regularity theory for those problems, using freely the information that the surface is in~$\Lambda_*$, but using the information that it is in~$H^s$ only once. This will give us the regularity for our problems, with constants as above.

The second challenge is deeper. The domain~$\Omega_*$ has an edge, and it is well known that elliptic problems in domain with corners or edges give solution which have in general a limited regularity at the corner, whatever the smoothness of the data.
We will prove below, using variational methods, that an~$H^1$ solution always exists. If~$\sigma>0$, one would expect from the case of a regular boundary that the solution should be~$H^{\sigma+1}$. However, in general for domains with corner, the solution is not necessarily~$H^\sigma$ at the edge. To be more specific, in~$2D$ it can be decomposed between a regular~$H^\sigma$ part and an explicit sum of singularities of the form~$r^\lambda$ or~$r^\lambda\ln r$ where~$r$ is the distance to the edge, and the~$\lambda$ are a discreet set of real numbers, here of the form~$(k+1/2)\pi/\omega$ with~$\omega$ the contact angle. Therefore, the first singularity to appear, for~$\lambda=\frac{\pi}{2\omega}$, limits the regularity of the solution to~$H^{1+\frac{\pi}{2\omega}-}$. To avoid the presence of those singularities in the evolution problem, we restrict our attention to the case where~$\omega<\pi/(n+1)$, so that we can take the regularity of the surface to be~$H^s$ with~$1+\frac n2<s<{\frac{\pi}{2\omega}-\frac12}$ and have at the same time solutions of~\eqref{eq:ellprob} with the expected regularity, and enough regularity of the surface to find solutions to the Cauchy problem. Our analysis follows closely the method in~\cite{DaugeEllCor}.

Because the meaning of the problem changes from variational to classical as~$\sigma$ increases, we recast it as follows.
First, we define for each~$\c S\in\Lambda_*$ the bilinear form
\[a_{\Omega}(u,v):=\int_\Omega\nabla u\cdot\overline{\nabla v}\d x=\int_{\Omega_*}\nab *u\cdot\overline{\nab *v}\d x_*,\]
where~$\nab *$ and~$\d x_*$ are the pullback by~$\Phi$ of the gradient and the Lebesgue measure to~$\Omega_*$.  They both derive from the pullback of the Euclidean metric to~$\Omega$, giving a bounded family of~$H^{s-\mez}$ metrics on~$\Omega_*$. We also identify~$u$ and~$v$ with their pullback to keep notations simple. Those forms are well-defined on~$H^1(\Omega_*)$.

Define the variational space
\[V:=\lB v\in H^1(\Omega_*),v=0\text{ on }\c S_*\rB.\]
The family of diffeomorphisms~$\Phi_\Omega$ induce an uniformly bounded family of isomorphism between~$V$ and the space~$V_\Omega:=\lB v\in H^1(\Omega),v=0\text{ on }\c S\rB$. Therefore if~$a$ is strongly coercive on~$V_\Omega$ with a constant independent of~$\Omega$, then it will be strongly coercive on~$V$ uniformly in~$\Lambda_*$. This means that we have to prove that for any~$v\in V_\Omega$,
\[\lA v\rA_{H^1(\Omega)}\leq C\lA\nabla v\rA_{L^2(\Omega)},\]
with~$C$ depending only on~$\Lambda_*$.

To prove this, we first remark that since~$\Omega$ is Lipschitz, the space~$V$ is the adherence for the~$H^1$ norm of~$C^\infty_c(\overline{\Omega}\setminus\c S)$. Also, because~$s-\frac12>\frac{n+1}2$, the set~$\Lambda_*$ is bounded in the~$L^\infty$ topology, and therefore all the domains~$\Omega$ are contained in a band delimited by two parallel hyperplanes. The function in~$C^\infty_c(\overline{\Omega}\setminus\c S)$ are simply the restriction of smooth functions that are zero near the ``upper'' hyperplane. Because we have defined the~$H^s$ norms in~$\Omega$ as the quotient norm from~$\R^n$, our inequality is a consequence of the fact that for those smooth functions,
\[\lA v\rA_{H^1}\leq C\lA\nabla v\rA_{L^2},\]
with~$C$ depending only on the distance between those hyperplanes, which is simply the Poincaré inequality. In summary, we have proved the following lemma.
\begin{lem}
	There exists a constant~$C$ depending only on~$\Lambda_*$ such that for any~$\c S$ in~$\Lambda_*$, the form~$a_\Omega$ satisfies for any~$u\in V$
	\[\lA u\rA_{H^1(\Omega)}^2\leq Ca(u,u).\]
	Therefore by Lax-Milgram, the family~$a$ generates a bounded family of isomorphisms~$A^{*}_{\Omega}$ between~$V$ and its dual~$V'$, defined by~$(A^{*}_\Omega u)(v)=a_{\Omega}(u,v)$. 
\end{lem}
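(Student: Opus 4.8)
The plan is to reduce this coercivity estimate, which lives on the \emph{fixed} Hilbert space $V\subset H^1(\Omega_*)$, to the single Poincaré inequality on the fixed reference domain $\Omega_*$, tracking every constant back to $\Lambda_*$.

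I would start from the geometric inputs already available. By Proposition~\ref{prop:globcoord}, $\Phi_\Omega$ and $\Phi_\Omega^{-1}$ are bounded in $H^s$ uniformly over $\Lambda_*$; since $s-1>\frac n2$ this controls $D\Phi_\Omega^{\pm1}$ in $L^\infty$, and, $\Phi_\Omega$ being close to the identity for $\delta$ small, keeps $\det D\Phi_\Omega^{\pm1}$ bounded away from $0$. Hence the pulled-back metrics on $\Omega_*$, whose Dirichlet density is $a_\Omega$, form a uniformly elliptic family: there are $0<c\leq C$ depending only on $\Lambda_*$ with $c\int_{\Omega_*}\la\nabla u\ra^2\,\d x_*\leq a_\Omega(u,u)\leq C\int_{\Omega_*}\la\nabla u\ra^2\,\d x_*$ for all $u\in H^1(\Omega_*)$, where now $\nabla$ and $\d x_*$ denote the \emph{Euclidean} gradient and measure on $\Omega_*$. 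The same change of variables gives, for $u\in H^1(\Omega_*)$ identified with its pushforward on $\Omega$, the equivalence $\|u\|_{H^1(\Omega)}^2\leq C\|u\|_{H^1(\Omega_*)}^2$ with $C$ uniform in $\Lambda_*$, and, dually, $\la a_\Omega(u,v)\ra\leq C\|u\|_{H^1(\Omega_*)}\|v\|_{H^1(\Omega_*)}$.

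Next I would invoke the one fixed Poincaré inequality: $\Omega_*$ is a bounded Lipschitz domain — its edge $\c L_*$ has contact angle bounded away from $0$ and $\pi$ — and $\c S_*$ is a part of $\partial\Omega_*$ of positive surface measure, so there is $C_*=C_*(\Omega_*)$ with $\|u\|_{H^1(\Omega_*)}^2\leq C_*\int_{\Omega_*}\la\nabla u\ra^2\,\d x_*$ for every $u\in V$. Chaining this with the two estimates above yields
\[\|u\|_{H^1(\Omega)}^2\leq C\|u\|_{H^1(\Omega_*)}^2\leq CC_*\int_{\Omega_*}\la\nabla u\ra^2\,\d x_*\leq \frac{CC_*}{c}\,a_\Omega(u,u),\]
which is the asserted bound with a constant depending only on $\Lambda_*$. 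Alternatively one can argue as in the running text: since $s-\mez>\frac{n+1}2$ the set $\Lambda_*$ is bounded in $L^\infty$, so every $\Omega$ lies in one fixed band between parallel hyperplanes; extending a function of $V_\Omega$ by zero across $\c S$ and integrating in the direction transverse to the band gives the Poincaré inequality on $\Omega$ directly, with the band width as the constant.

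Once coercivity is in hand the last sentence is immediate: continuity of $a_\Omega$ on $V$ was recorded above, so the Lax--Milgram theorem on the Hilbert space $V$ produces, for each $\c S\in\Lambda_*$, a bounded isomorphism $A^*_\Omega:V\to V'$ defined by $(A^*_\Omega u)(v)=a_\Omega(u,v)$, with $\|A^*_\Omega\|$ and $\|(A^*_\Omega)^{-1}\|$ controlled by $\Lambda_*$. I do not expect a genuine obstacle here; the only point needing care is precisely the uniformity — making sure each constant that enters (the ellipticity of the pulled-back metric, the change-of-variables factors, and the Poincaré constant) is traced to $\Lambda_*$, i.e.\ to the fixed $\Omega_*$ and to $\delta$, rather than to the individual, possibly rough, surface $\c S$, which is exactly what Proposition~\ref{prop:globcoord} and the $L^\infty$-boundedness of $\Lambda_*$ are there to guarantee.
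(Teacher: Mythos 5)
Your proposal is correct, and your primary route differs slightly from the paper's. The paper transports the question to the moving domain~$\Omega$: it observes that all~$\Omega$ with~$\c S\in\Lambda_*$ lie in a fixed band between two parallel hyperplanes (since~$\Lambda_*$ is bounded in~$L^\infty$), that~$V_\Omega$ is the~$H^1$-closure of~$C^\infty_c(\overline\Omega\setminus\c S)$, and that such functions extend to smooth functions vanishing near the ``upper'' hyperplane, so the one-dimensional Poincar\'e inequality across the band gives~$\lA v\rA_{H^1(\Omega)}\leq C\lA\nabla v\rA_{L^2(\Omega)}$ with~$C$ depending only on the band width --- no Poincar\'e constant for the corner domain~$\Omega_*$ itself is ever invoked. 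You instead stay on the fixed reference domain: you establish uniform ellipticity of the pulled-back metrics from Proposition~\ref{prop:globcoord} (uniform~$L^\infty$ control of~$D\Phi_\Omega^{\pm1}$ and of the Jacobian) and then apply the single Poincar\'e inequality on~$\Omega_*$ for functions vanishing on the positive-measure boundary portion~$\c S_*$. Both arguments are sound and both trace every constant to~$\Lambda_*$; the paper's version has the small advantage of not requiring any discussion of the Lipschitz character of the corner domain (the band argument is domain-independent), while yours isolates more cleanly the two distinct ingredients (geometry of the chart versus a fixed functional inequality) and is the template the paper itself follows later when freezing coefficients. You even record the paper's band argument as your alternative, so nothing is missing. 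The Lax--Milgram conclusion is handled identically in both.
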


To simplify the notations, we now omit the subscript~$\Omega$ from our operators, keeping in mind that we are really dealing with a family of problems on which our estimates have to be uniform.

The meaning of our problem~\eqref{eq:ellprob} changes, as~$\sigma$ increases, from variational to classical. To be precise, we introduce the following family~$A$ of operators~$A^{(\sigma)}$.
\begin{itemize}
	\item For~$0\leq \sigma\leq\frac12$, we take
	\[A^{(\sigma)}:H^{\sigma+1}(\Omega_*)\rightarrow H^{\sigma+\frac12}(\c S_*)\times(V^{1-\sigma})',\]
	where~$V^{1-\sigma}:=\lB v\in H^{1-\sigma}(\Omega_*);v\rvert_{\c S_*}=0\rB$. It is defined by
	\[A^{(\sigma)}u=(u\rvert_{\c S_*},g),\]
	where for any~$v\in V^{1-\sigma}$,
	\[g(v)=a(u,v).\]
	\item For~$\frac12< \sigma<1$, we take
	\[A^{(\sigma)}:H^{\sigma+1}(\Omega_*)\rightarrow H^{\sigma+\frac12}(\c S_*)\times(H^{\sigma-1}(\Omega_*))\times H^{\sigma-\frac12}(\c B_*),\]
	defined by
	\[A^{(\sigma)}u=(u\rvert_{\c S_*},g,(\nab \nu)_*u\rvert_{\c B_*}),\]
	where for any~$v\in H^{1-\sigma}(\Omega_*)$,
	\[g(v)=a(u,v)-\int_{\c B_*}(\nab \nu)_*u\overline v,\]
	with~$(\nab \nu)_*u$ the pullback to~$\c B^*$ of~$\nab \nu u$ on~$\c B$.
	Here we have identified~$H^{\sigma-1}$ as the dual to~$H^{1-\sigma}$ since~$0<1-\sigma<\frac12$. 
	\item For~$1\leq \sigma\leq s-\frac12$, we take
	\[A^{(\sigma)}:H^{\sigma+1}(\Omega_*)\rightarrow H^{\sigma+\frac12}(\c S_*)\times H^{\sigma-1}(\Omega_*)\times H^{\sigma-\frac12}(\c B_*),\]
	defined by
	\[A^{(\sigma)}u=(u\rvert_{\c S_*},-\Delta_*u,(\nab \nu)_*u\rvert_{\c B_*}),\]
	where~$\Delta_*$ is the Laplace operator for the pulled-back metric.
\end{itemize}

We remark that this family of operators correspond more properly to~$-\Delta$, which of course does not change anything.

Because of Green's identity on the domain~$\Omega$, there holds
\[a(u,v)=\int_{\Omega_* }(-\Delta_*)u\overline{v}+\int_{\c S_*}(\nab \nu)_*u\overline v+\int_{\c B_*}(\nab \nu)_*u\overline v\]
for regular functions.

The expression
\[\int_{\Omega_*}\nab *u\cdot\overline{\nab *v}\d x_*\]
makes sense when the integral is interpreted as a duality product in~$H^{\sigma}\times H^{-\sigma}$, for~$0\leq\sigma\leq\frac12$. Therefore if~$0\leq \sigma\leq\frac12$,  for~$u\in H^{1+\sigma}$, $a(u,.)$ can be thought of as a linear form on~$V^{1-\sigma}$. The definition is to be interpreted in this sense.

If~$v$ is zero on~$\c S$ and the functions are regular enough, there holds
\[a(u,v)-\int_{\c B_*}(\nab \nu)_*u\overline v=\int_{\Omega_* }(-\Delta_*)u\overline v,\]
and again if we take the integral as a duality product, for~$\frac12<\sigma\leq1$ the right-hand side makes sense for~$u\in H^{1+\sigma}$ as a linear form on~$v\in H^{1-\sigma}$. It is again in this sense that the definition is to be interpreted.

For~$\sigma=1$, the Green formula tells us the classical and variational formulations coincide.

\begin{prop}	\label{prop:varsol}
	The operator~$A^{(0)}$ is an isomorphism between~$H^1(\Omega_*)$ and~$H^{\frac12}(\c S_*)\times(V)' $, whose inverse has bounded norm as~$\c S$ varies in~$\Lambda_*$.
\end{prop}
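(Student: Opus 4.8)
The plan is to obtain the isomorphism statement directly from the Lax--Milgram setup established in the preceding lemma, by treating the inhomogeneous Dirichlet datum on $\c S_*$ by lifting. First I would reduce to the homogeneous case: given $(f,g)\in H^{\frac12}(\c S_*)\times V'$, choose an extension operator $R:H^{\frac12}(\c S_*)\to H^1(\Omega_*)$ with $Rf\rvert_{\c S_*}=f$ and $\lA Rf\rA_{H^1(\Omega_*)}\le C\lA f\rA_{H^{\frac12}(\c S_*)}$, with $C$ uniform in $\Lambda_*$. Such an operator exists by the trace theory pulled back from $\Omega_*$, and by working on the fixed domain $\Omega_*$ (rather than the moving $\Omega$) one gets the uniform bound for free, since the metric coefficients vary in a bounded family of $H^{s-\frac12}$ metrics. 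Then $u$ solves $A^{(0)}u=(f,g)$ if and only if $w:=u-Rf\in V$ satisfies $a(w,v)=g(v)-a(Rf,v)$ for all $v\in V$. The right-hand side is a bounded linear form on $V$, with norm $\le \lA g\rA_{V'}+C\lA f\rA_{H^{\frac12}(\c S_*)}$.

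Next I would invoke the lemma: $a$ is coercive on $V$ with constant uniform in $\Lambda_*$, hence by Lax--Milgram there is a unique $w\in V$ with the displayed identity, and $\lA w\rA_{H^1(\Omega_*)}\le C\lp\lA g\rA_{V'}+\lA f\rA_{H^{\frac12}(\c S_*)}\rp$. Setting $u=w+Rf$ gives existence, uniqueness, and the bound $\lA u\rA_{H^1(\Omega_*)}\le C\lp\lA f\rA_{H^{\frac12}(\c S_*)}+\lA g\rA_{V'}\rp$ with $C$ uniform in $\Lambda_*$. Conversely, continuity of $A^{(0)}$ is immediate: the trace map $H^1(\Omega_*)\to H^{\frac12}(\c S_*)$ is bounded uniformly in $\Lambda_*$, and $\la a(u,v)\ra\le C\lA u\rA_{H^1}\lA v\rA_{H^1}$ shows $u\mapsto a(u,\cdot)$ is bounded into $V'$. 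So $A^{(0)}$ is a bounded bijection with bounded inverse, i.e. an isomorphism, and all the constants depend only on $\Lambda_*$.

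The only point requiring a little care is that the bijectivity statement must be read correctly: $A^{(0)}u=(u\rvert_{\c S_*},g)$ where $g(v)=a(u,v)$ is defined \emph{a priori} only for $v\in V$, but since $H^{\frac12}(\c S_*)\times V'$ is exactly the target space and the argument above produces, for every pair in that space, a unique preimage, surjectivity onto the stated space is what we proved. One should also check the uniqueness half of injectivity independently of the lifting: if $A^{(0)}u=0$ then $u\rvert_{\c S_*}=0$, so $u\in V$, and $a(u,u)=g(u)=0$, whence $u=0$ by coercivity. I expect the main (very mild) obstacle to be bookkeeping the uniformity of the lifting operator $R$ over $\Lambda_*$; this is handled by fixing $R$ once and for all on $\Omega_*$ (independent of $\c S$) and noting that the bilinear form $a=a_\Omega$, written on $\Omega_*$, has coefficients bounded uniformly in $\Lambda_*$, so the perturbation term $a(Rf,v)$ inherits a uniform bound. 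Everything else is the standard Lax--Milgram argument.
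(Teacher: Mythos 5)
Your proposal is correct and follows essentially the same route as the paper: lift the Dirichlet datum by a Sobolev extension on the fixed domain $\Omega_*$, reduce to a homogeneous problem in $V$, and apply Lax--Milgram via the uniform coercivity of $a$ from the preceding lemma. The extra details you supply (explicit injectivity check, continuity of $A^{(0)}$, uniformity of the lifting) are consistent with, and slightly more complete than, the paper's argument.
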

\begin{proof}
	Being given~$(f,g)\in H^{\frac12}(\c S_*)\times(V)'$, We want to find~$u\in H^{\sigma+1}(\Omega_*)$ such that~$f=u\rvert_{\c S_*}$ and~$g(v)=a(u,v)$ for~$v\in V$. 
	We consider a Sobolev extension~$f\mapsto\tilde f$ from~$H^{\frac12}(\c S_*)$ to~$H^1(\Omega_*)$. It exists by a construction similar to the one in the proof of Proposition~\ref{prop:globcoord}. Then we use the strong coercivity of~$a$ to find~$\tilde{u}\in V$ such that
	\[a(\tilde u,v)=g(v)-a(\tilde f,v),\]
	and we set~$u=\tilde u+\tilde f$. The fact that the constant associated to this construction is uniform in~$\Lambda_*$ comes from the uniformity of the constant in the coercivity of~$a$.
\end{proof}

We link those different formulations using the following embeddings~$I_{\sigma,\sigma'}$ of the target of~$A^{(\sigma')}$ into the target of~$A^{(\sigma)}$, for~$\sigma<\sigma'$.
\begin{itemize}
	\item If~$0\leq \sigma<\sigma'<\frac12$, they are the canonical embeddings of the space~$H^{\sigma+\frac12}(\c S_*)\times(V^{1-\sigma})'$ in~$H^{\sigma'+\frac12}(\c S_*)\times(V^{1-\sigma'})'$.
	\item If~$0\leq \sigma<\frac12<\sigma'$, we write~$I_{\sigma,\sigma'}(f,g,h)=(f,g')$ with
	      \[g'(v)=g(v)+\int_{\c B_*}h\overline v\]
	      which define an embedding from~$H^{\sigma+\frac12}(\c S_*)\times(H^{1-\sigma}(\Omega_*))'\times H^{\sigma-\frac12}(\c B_*)$ to~$H^{\sigma+\frac12}(\c S_*)\times(V^{1-\sigma})'$.      
	\item If~$\frac12<\sigma<\sigma'$,
	      again we take the trivial embedding.
\end{itemize}
We remark that the definition does not depend on~$a$, and is therefore the same for all~$\c S\in\Lambda_*$.
The following is immediate.
\begin{lem}
\begin{enumerate}
	\item For~$0\leq \sigma<\sigma'<\sigma''$, 
	      \[I_{\sigma,\sigma'}\circ I_{\sigma',\sigma''}=I_{\sigma,\sigma''}.\]
	\item For~$0\leq \sigma<\sigma'$,
	      \[A^{(\sigma)}\rvert_{H^{1+\sigma'}}=I_{\sigma,\sigma'}\circ A^{(\sigma')}.\]
	\item For~$0\leq \sigma<\sigma'$, $I_{\sigma,\sigma'}$ is compact.
\end{enumerate}
\end{lem}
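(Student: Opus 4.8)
The plan is to verify the three assertions directly; since the definitions of the operators $A^{(\sigma)}$ and of the embeddings $I_{\sigma,\sigma'}$ change shape as $\sigma$ passes through $\frac12$ and $1$, the only work is a short case analysis, and, importantly, none of the three statements needs uniformity in $\Lambda_*$, so no geometry enters. For item~(1) I would check transitivity regime by regime: when $\sigma<\sigma'<\sigma''<\frac12$, and when $\frac12<\sigma<\sigma'<\sigma''$, all three maps are the canonical Sobolev restrictions, so the identity is immediate; when $\sigma<\sigma'<\frac12<\sigma''$, the map $I_{\sigma',\sigma''}$ sends $(f,g,h)$ to $\bigl(f,\ v\mapsto g(v)+\int_{\c B_*}h\overline v\bigr)$ seen in $H^{\sigma'+\frac12}(\c S_*)\times(V^{1-\sigma'})'$, and $I_{\sigma,\sigma'}$ then merely restricts that linear form from $V^{1-\sigma'}$ to the smaller space $V^{1-\sigma}$, which, because the formula $v\mapsto g(v)+\int_{\c B_*}h\overline v$ does not depend on $\sigma$, is exactly $I_{\sigma,\sigma''}(f,g,h)$; the last case $\sigma<\frac12<\sigma'<\sigma''$ is the same with $I_{\sigma',\sigma''}$ trivial and $I_{\sigma,\sigma'}$ the one adding the boundary term.

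For item~(2), fix $u\in H^{1+\sigma'}(\Omega_*)$ and again split on $\sigma,\sigma'$. If both are $\le\frac12$, both $A^{(\sigma)}u$ and $A^{(\sigma')}u$ are of the form $\bigl(u\rvert_{\c S_*},\ v\mapsto a(u,v)\bigr)$, posed on $V^{1-\sigma}$ respectively $V^{1-\sigma'}$, and $I_{\sigma,\sigma'}$ is precisely the restriction of forms from the larger of the two spaces to the smaller, so they coincide. If $\sigma\le\frac12<\sigma'$, then $u\in H^{1+\sigma'}$ with $\sigma'>\frac12$ admits a genuine Neumann trace $(\nab\nu)_*u\rvert_{\c B_*}$, and $A^{(\sigma')}u$ records it along with either $v\mapsto a(u,v)-\int_{\c B_*}(\nab\nu)_*u\,\overline v$ (if $\sigma'<1$) or $-\Delta_*u$ (if $\sigma'\ge1$); applying $I_{\sigma,\sigma'}$ adds $\int_{\c B_*}(\nab\nu)_*u\,\overline v$ back and unfolds $-\Delta_*u$ against the test function, whereupon Green's identity on $\Omega_*$ for $v$ vanishing on $\c S_*$ collapses the second component to $v\mapsto a(u,v)$ on $V^{1-\sigma}$, i.e.\ the second component of $A^{(\sigma)}u$. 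The case $\frac12<\sigma<\sigma'$ is the same computation with no boundary term ever appearing.

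Item~(3) reduces to Rellich--Kondrachov together with the fact that the transpose of a compact operator is compact. On the compact manifolds $\c S_*$ and $\c B_*$, and on the bounded Lipschitz domain $\Omega_*$ — the edge is harmless here, the compact embedding theorem holding for any bounded Lipschitz, a fortiori any bounded extension, domain — the inclusion $H^{a}\hookrightarrow H^{b}$ is compact whenever $a>b$. Up to composition with the fixed continuous map that adds the $\c B_*$ boundary term, each component of $I_{\sigma,\sigma'}$ is of that type: the $\c S_*$ component is $H^{\sigma'+\frac12}(\c S_*)\hookrightarrow H^{\sigma+\frac12}(\c S_*)$; for $\frac12<\sigma$ the other two are $H^{\sigma'-1}(\Omega_*)\hookrightarrow H^{\sigma-1}(\Omega_*)$ and $H^{\sigma'-\frac12}(\c B_*)\hookrightarrow H^{\sigma-\frac12}(\c B_*)$; and for $\sigma<\frac12$ the map into $(V^{1-\sigma})'$ is the transpose of the compact inclusion $V^{1-\sigma}\hookrightarrow V^{1-\sigma'}$ when $\sigma'\le\frac12$, and the transpose of the compact maps $V^{1-\sigma}\hookrightarrow H^{1-\sigma'}(\Omega_*)$ and $V^{1-\sigma}\to H^{\frac12-\sigma'}(\c B_*)$ (a continuous trace followed by a Rellich embedding) when $\sigma'>\frac12$. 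A finite product of compact maps into a product of Banach spaces being compact, $I_{\sigma,\sigma'}$ is compact.

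I do not expect a genuine obstacle: the staged construction of $A$ and $I$ was arranged precisely so that consistency and compactness become pure bookkeeping. The only spots calling for care are dualizing compact inclusions, rather than arguing compactness directly, in the $\sigma<\frac12$ regime of item~(3), and recording that every Rellich embedding invoked on $\Omega_*$ is indifferent to the presence of the edge.
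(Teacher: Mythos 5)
Your proof is correct and follows exactly the route the paper intends: the paper declares the lemma ``immediate'' after setting up the Green identities relating $a(u,v)$, $-\Delta_*u$ and $(\nab\nu)_*u$, and your case-by-case verification of (1) and (2) uses precisely those identities, while (3) is the standard Rellich--Kondrachov plus Schauder (transpose of compact is compact) bookkeeping. No gap; the only care needed is the one you already flag, namely dualizing the compact inclusions $V^{1-\sigma}\hookrightarrow V^{1-\sigma'}$ (and the trace into $H^{\frac12-\sigma'}(\c B_*)$) in the $\sigma<\frac12$ regime.
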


As a consequence of this and of Proposition~\ref{prop:varsol}, being given~$(f,g)\in H^{\sigma+\frac12}(\c S_*)\times(V^{1-\sigma})'$ for~$0\leq \sigma<\frac12$, or~$(f,g,h)\in H^{\sigma+\frac12}(\c S_*)\times H^{\sigma-1}(\Omega_*)\times H^{\sigma-\frac12}(\c B_*)$ for~$\frac12<\sigma\leq s-\frac12$, we can always define an~$H^1$ variational solution~$u$ by
\[u=(A^{(0)})^{-1}I_{0,\sigma}(f,g,h).\]

Since we have included transversality in our definition of~$\Lambda_*$, for any~$\c S$ in it and any point on its water line~$\c L$, we can define a contact angle as the angle between the inward normal vector to~$\c L$ in~$\c B$ and the inward normal vector to~$\c L$ in~$\c S$. This defines a continuous function on~$\c L$ because~$s-\frac12>\frac{n+1}2$ and~$n\geq2$. Therefore by taking~$\delta$ small enough, which shrinks~$\Lambda_*$, we can assume that all the angles of all the surfaces in~$\Lambda_*$ lie in an interval~$[\underline\omega,\overline\omega]$ with~$0<\underline\omega$ and~$\overline\omega<2\pi$.
Our aim is then to prove the following theorem.
\begin{thm}	\label{thm:ellreg}
	For any~$\c S$ in~$\Lambda_*$, for any~$0\leq \sigma<\min(s-1,\frac{\pi}{2\overline\omega})$, the operator~$A^{(\sigma)}_{\Omega}$ is an isomorphism.
	The norm of the inverse is uniformly bounded in~$\Lambda_*$.
	
	If~$\c S$ is also in~$H^s$, and if~$\sigma<\min(s-\frac12,\frac{\pi}{2\overline\omega}),$ then the operator~$A^{(\sigma)}_{\Omega}$ is still an isomorphism,
	and the norm of the inverse is bounded by~$C(1+\la\c S\ra_s)$, with~$C$ uniform in~$\Lambda_*$.
\end{thm}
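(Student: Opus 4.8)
The plan is to follow the classical Kondrat'ev--Dauge strategy for elliptic problems in domains with edges, adapted to keep uniform control over the geometry. The proof proceeds in four stages, detailed below.

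\textbf{Step 1: Fredholm property via compactness.} First I would establish that each $A^{(\sigma)}_\Omega$ is Fredholm of index zero. By Proposition~\ref{prop:varsol}, $A^{(0)}$ is an isomorphism, and by part~(2) of the preceding lemma, $A^{(\sigma)}\rvert_{H^{1+\sigma'}} = I_{\sigma,\sigma'}\circ A^{(\sigma')}$ for $\sigma<\sigma'$. Since the embeddings $I_{\sigma,\sigma'}$ are compact (part~(3)), and since the variational solution $u = (A^{(0)})^{-1}I_{0,\sigma}(f,g,h)$ always exists in $H^1$, injectivity of $A^{(\sigma)}$ would follow once we know that any such variational solution with data in the $\sigma$-range is actually in $H^{1+\sigma}$ --- i.e., the issue is purely one of \emph{regularity at the edge}, not of solvability. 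So the crux is an a priori estimate of the form $\lA u\rA_{H^{1+\sigma}(\Omega_*)} \leq C\lp \lA A^{(\sigma)}u\rA + \lA u\rA_{H^1(\Omega_*)}\rp$, with $C$ uniform in $\Lambda_*$; combined with compactness of $H^{1+\sigma}\hookrightarrow H^1$ and the already-known injectivity on the orthogonal complement of the (trivial) kernel, this gives the isomorphism.

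\textbf{Step 2: Interior and smooth-boundary regularity.} Away from the edge $\c L_*$, the problem is a standard mixed Dirichlet--Neumann problem on a manifold with smooth boundary (the bottom $\c B_*$ and the surface $\c S_*$ separately), with $H^{s-\mez}$ coefficients coming from the pulled-back metric. Using a partition of unity subordinate to a cover of $\overline{\Omega_*}$, one reduces to: (a) interior estimates, (b) half-space estimates for the Dirichlet condition near $\c S_*$, (c) half-space estimates for the Neumann condition near $\c B_*$. In each region one gets $\lA u\rA_{H^{1+\sigma}} \lesssim C(1+\la\c S\ra_s)\lp\text{data}\rp$ by the usual Nirenberg difference-quotient / freezing-coefficients argument, the factor $C(1+\la\c S\ra_s)$ appearing exactly once when the $H^s$ regularity of the metric is invoked and everything else being controlled by $\Lambda_*$. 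This localizes the difficulty entirely to a neighborhood of $\c L_*$.

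\textbf{Step 3: Mellin analysis near the edge --- the main obstacle.} Near $\c L_*$, using $H^{s+\mez}$ local charts (Proposition~\ref{prop:globcoord} and Lemma~\ref{lem:extmodel}), the domain looks like a wedge $\c C_\omega\times\R^{n-2}$ of opening angle $\omega(x')$ varying smoothly along the edge. One freezes the transverse variable, writes the principal part of $\Delta_*$ in polar coordinates $(r,\theta)$ in the wedge cross-section, and applies the Mellin transform in $r$. The operator pencil $\mathfrak{A}_\omega(\lambda)$ on the interval $(0,\omega)$ --- with a Dirichlet condition on one side (from $\c S_*$) and a Neumann condition on the other (from $\c B_*$) --- has eigenvalues exactly at $\lambda = (k+\mez)\pi/\omega$, $k\in\Z$, the associated singularities being $r^\lambda$ (and $r^\lambda\ln r$ in resonant cases). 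The regularity shift $H^1\to H^{1+\sigma}$ is unobstructed precisely when the strip $\Re\lambda\in(0,\sigma)$ contains no eigenvalue, which holds iff $\sigma < \pi/(2\omega) \leq \pi/(2\overline\omega)$. This is where the hypothesis $\sigma<\pi/(2\overline\omega)$ enters, and it is the heart of the matter: one must verify there is no spectrum of the pencil in the relevant strip, uniformly as $\omega$ ranges over $[\underline\omega,\overline\omega]$ (so that the residue/contour-shift estimate in the inverse Mellin transform has a uniform constant), and then patch the frozen-coefficient model estimate back to the variable-angle, variable-coefficient operator by a commutator/perturbation argument, absorbing lower-order terms into $\lA u\rA_{H^1}$. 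The subtle points are the uniform separation of the eigenvalue $\pi/(2\overline\omega)$ from the strip, tracking the $\R^{n-2}$ edge-variable dependence (which only costs $\la\c L\ra_{s-\mez}$, itself controlled by $\la\c S\ra_s$), and handling the compatibility conditions of Lemma~\ref{lem:extmodel} at the corner.

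\textbf{Step 4: Assembly.} Patching Steps~2 and~3 with the partition of unity yields the global a priori estimate with constant $C(1+\la\c S\ra_s)$, valid for $\sigma<\min(s-\mez,\pi/(2\overline\omega))$ when $\c S\in H^s$; dropping the last use of $H^s$ regularity and only invoking $\Lambda_*$-bounds gives the weaker range $\sigma<\min(s-1,\pi/(2\overline\omega))$ with $C$ uniform in $\Lambda_*$. Feeding this estimate into the Fredholm argument of Step~1 (the estimate gives closed range and finite-dimensional kernel; index zero follows by continuously deforming to the flat case, or from the variational solvability which shows surjectivity modulo the kernel; injectivity then forces the kernel to be trivial since a nonzero kernel element would be an $H^1$ variational solution with zero data, impossible by coercivity) completes the proof that $A^{(\sigma)}_\Omega$ is an isomorphism with inverse bounded as claimed.
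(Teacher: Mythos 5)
Your proposal is correct and follows essentially the same route as the paper: reduction of invertibility to edge regularity of the already-existing variational solution, localization into interior/smooth-boundary/edge pieces, Mellin analysis of the frozen Dirichlet--Neumann pencil on the sector with spectrum at $(k+\tfrac12)\pi/\omega$, a perturbation argument back to the variable-coefficient operator, and a single use of the $H^s$ regularity of $\c S$ to get the constant $C(1+\la\c S\ra_s)$. The only point you gloss over that the paper treats carefully is that the contour shift does cross poles at the integers in $(0,\sigma)$ (where the pencil is only injective \emph{modulo polynomials} and the Mellin transform of the data itself has poles), so one must check inductively that the corresponding residues are polynomials and hence harmless.
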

Because we already now that a variational solution~$u$ exists, and because~$A^{(0)}\rvert_{H^{1+\sigma'}}=I_{0,\sigma}\circ A^{(\sigma)}$, the statement is really on the regularity of this variational solution. 

Since~$\overline{\Omega_*}$ is compact, the regularity of the solution is equivalent to its regularity in a neighborhood of each point. Regularity near interior points and near regular points of the boundary are classical, because~$\Delta$ is an elliptic operator, so that~$\Delta_*$ is also elliptic. In fact, the maximal angle does not limit the regularity there. The dependence on the constant comes from the bootstrap nature of the estimates: we prove the regularity at level~$H^\sigma$ by assuming it at level~$H^{\sigma-\frac12}$, so that we only need to use the information that~$\c S$ is~$H^s$ once.

We concentrate on the case of the neighborhood of a point~$x\in\c L$. The method rests on the analysis of a constant-coefficients model operator, which we think of as ``frozen'' at the point~$x$, the regularity of which will imply the regularity of the original problem. 

\paragraph{Model operator}
The model operator is constructed as follows. We start with the form~$a$ on~$\Omega_*$. It can be written
\[a(u,v)=\sum_{i,j=1}^n\int_{\Omega_*}\alpha_{i,j}\partial_iu\overline{\partial_jv},\]
where the~$\alpha_{i,j}$ are~$H^{s-\frac12}$ functions in~$\Omega_*$. Because~$s-\frac12>\frac{n+1}2$, they are continuous functions and therefore make sense at~$x$. We can then consider the constant-coefficients form~$p$ on~$\R^{n-2}\times\Gamma_*$, where~$\Gamma_*$ is the conical sector of the plane of summit~$0$ and angle~$\omega_*$, the contact angle at~$x$, defined by the formula
\[p(u,v)=\sum_{i,j=1}^n\int_{\R^{n-2}\times\Gamma_*}\alpha_{i,j}(x)\partial_iu\overline{\partial_jv}.\]
Then we further reduce the problem to the form~$l$ defined on~$\Gamma_*$ by
\[l(u,v)=\sum_{i,j=1}^2\int_{\Gamma_*}\alpha_{i,j}(x)\partial_iu\overline{\partial_jv},\]
where we have only kept the derivatives corresponding to the variables in~$\Gamma_*$. This form also satisfies a Green formula, and we can attach it to a family of problems~$L^{(\sigma)}$ in the exact same way we have used for~$A$. The constant-coefficients differential operator~$L$ it is attached to is simply obtained by freezing the principal part of~$\Delta_*$ at~$x$ and replacing all derivatives in the part tangential to~$\c L_*$ by~$0$, and the Neumann condition is for the normal vector with constant coefficients frozen at~$x$ and projected onto~$\Gamma_*$. 

To further simplify the study of those problems, we notice that this definition is really an invariant notion. The reason is that if we have a local~$H^{s}$ diffeomorphism near~$x$ to another neighborhood of a point in the transversal intersection of two hypersurfaces, its differential at~$x$ makes sense again because~$s-1>\frac{n}2$, and can be interpreted as a linear diffeomorphism between~$\R^{n-2}\times\Gamma_*$ and the corresponding angular sector~$\R^{n-2}\times\Gamma'$, sending~$\Gamma_*$ isomorphically onto~$\Gamma'$. It is then easy to see that the restriction of this linear isomorphism to~$\Gamma_*$ sends~$l$ to the constant-coefficients form obtained by sending~$a$ to the new domain, and freezing the coefficients as above. Applying this to the diffeomorphism~$\Phi$ between~$\Omega_*$ and~$\Omega$, we see that the study of the~$L^{(\sigma)}$ is equivalent to the study of the problems derived from the form
\[\int_{\Gamma}\nabla u\cdot\overline{\nabla v}\d z\]
in the sector~$\Gamma$ whose angle~$\omega$ is the original contact angle in~$\Omega$.
It corresponds to the constant coefficients Laplace operator, with Dirichlet condition on one edge and Neumann condition on the other.
Because we want our constants to be uniform in~$\Lambda_*$, we will study those operators in the fixed domain~$\Gamma_*$.  We will however use the version in~$\Gamma$ to find a particular algebraic condition. Remark that the coefficients of the form~$l$ are a fixed number of constants inhabiting a compact set as~$\c S$ varies in~$\Lambda_*$.
 
We denote by~$B_*$ the bottom edge and by~$S_*$ the surface edge of~$\Gamma_*$.
Our aim  is to prove the following Proposition.
\begin{prop}	\label{prop:modelreg}
	For any~$\c S$ in~$\Lambda_*$, for any~$0\leq \sigma<\frac{\pi}{2\omega}$, if there exists~$u\in H^1(\Gamma_*)$ compactly supported and
	\begin{enumerate}
		\item $(f,g)\in H^{\sigma+\frac12}(S_*)\times(V^{1-\sigma})'$ for~$0\leq \sigma<\frac12$,
		\item $(f,g,h)\in H^{\sigma+\frac12}(S_*)\times(H^{1-\sigma}(\Gamma_*))'\times H^{\sigma-\frac12}(B_*)$ for~$\frac12<\sigma<1$,
		\item	$(f,g,h)\in H^{\sigma+\frac12}\times H^{\sigma-1}(\Gamma_*)\times H^{\sigma-\frac12}(B_*)$ for~$1\leq \sigma$,
	\end{enumerate}
	such that
	\[L^{(0)}u=I_{0,\sigma}(f,g)\quad(\text{resp. }L^{(0)}u=I_{0,\sigma}(f,g,h)),\]
	then~$u\in H^{\sigma+1}(\Gamma_*)$, with constants of the good form, depending on the support of~$u$.
\end{prop}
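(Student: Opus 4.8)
The plan is to study the constant-coefficient model problem on the infinite sector $\Gamma_*$ by separation of variables in polar coordinates $(r,\theta)$, reducing everything to an ODE in $\theta$ with a spectral parameter, and then to use the Mellin transform in $r$ to read off the regularity of the variational solution. I would proceed as follows. First, as already noted in the excerpt, by applying the linear change of variables coming from $D\Phi(x)$ I may assume the form is $\int_{\Gamma}\nabla u\cdot\overline{\nabla v}\d z$, i.e. the operator is the flat Laplacian on a sector of opening $\omega$ with a homogeneous Dirichlet condition on one edge $S_*$ and a homogeneous Neumann condition on the other edge $B_*$; the coefficients $\alpha_{i,j}(x)$ live in a compact set as $\c S$ ranges over $\Lambda_*$, which is what will make the constants uniform. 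Writing the Laplacian in polar coordinates, the associated Mellin symbol is the operator pencil $\c A(\lambda) = -\partial_\theta^2 - \lambda^2$ on $(0,\omega)$ with Dirichlet condition at $\theta=0$ and Neumann at $\theta=\omega$ (or vice versa). Its eigenvalues are exactly the $\lambda_k=\pm(k+\tfrac12)\pi/\omega$, $k\in\N$: these are the points where $\c A(\lambda)$ fails to be invertible, i.e. the singular exponents, and the corresponding singular functions are $r^{\lambda_k}\cos$ or $\sin$ of the angular eigenfunction (no logarithms appear here since all eigenvalues are simple and of multiplicity one in the pencil).

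Next I would invoke the standard Kondrat'ev--Mellin machinery (this is exactly the setup of \cite{DaugeEllCor}): a solution $u\in H^1(\Gamma_*)$, compactly supported, of $Lu=$ (data of regularity $H^{\sigma-1}$ in the interior with the stated boundary regularities), admits an asymptotic expansion near the vertex as a sum of a remainder in $H^{\sigma+1}$ plus a finite linear combination of the singular functions $r^{\lambda_k}$ with $0<\operatorname{Re}\lambda_k<\sigma$. The key arithmetic point is that the first positive exponent is $\lambda_0=\tfrac{\pi}{2\omega}$; hence if $\sigma<\tfrac{\pi}{2\omega}$ there are \emph{no} singular exponents in the strip $(0,\sigma)$, the singular part is empty, and $u$ is genuinely in $H^{\sigma+1}(\Gamma_*)$. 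Concretely one localizes with a cutoff supported near the vertex (far from the vertex the problem is a smooth elliptic problem on a domain with a smooth corner-free boundary, handled classically), applies the Mellin transform in $r$, inverts the pencil $\c A(\lambda)$ on the line $\operatorname{Re}\lambda = \sigma$ — which is possible precisely because this line avoids the spectrum — and uses the Parseval identity for the Mellin transform to convert the weighted estimate back into the plain $H^{\sigma+1}$ estimate. One must separately handle the low regularity ranges $0\le\sigma<\tfrac12$ and $\tfrac12<\sigma<1$ where the ``data'' lives in a dual space and the Neumann trace is only defined variationally, but the Mellin analysis is identical; the matching/compatibility at $\sigma=\tfrac12$ and $\sigma=1$ is exactly the content of the embeddings $I_{\sigma,\sigma'}$ and the Green-formula remark above. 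Uniformity of the constants in $\Lambda_*$ follows because the whole construction depends only on $\omega$ and on the finitely many coefficients $\alpha_{i,j}(x)$, all confined to a compact set; the dependence on the support of $u$ enters only through the cutoff and a Poincaré-type inequality on the truncated sector.

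The main obstacle, and the technical heart of the argument, is the spectral analysis of the operator pencil $\c A(\lambda)$: one must verify precisely which $\lambda$ give non-invertibility, that there are no generalized eigenvectors (so no $r^\lambda\ln r$ terms spoil the picture), and — crucially for \emph{uniformity} — that the norm of $\c A(\lambda)^{-1}$ on the critical line $\operatorname{Re}\lambda=\sigma$ is bounded by a constant depending only on the distance from $\sigma$ to the set $\{(k+\tfrac12)\pi/\omega\}$ and on the data's compact range, uniformly as $\omega$ varies in $[\underline\omega,\overline\omega]$. Granting that, the rest is the by-now-routine (but still bookkeeping-heavy) translation between Mellin-weighted Sobolev spaces and ordinary $H^{\sigma+1}(\Gamma_*)$ near a conical point, for which I would simply cite the corresponding statements in \cite{DaugeEllCor} and indicate the modifications needed to track the constants.
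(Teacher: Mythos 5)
Your proposal follows essentially the same route as the paper: Mellin transform in $r$, spectral analysis of the angular pencil with mixed Dirichlet/Neumann conditions giving the exponents $(k+\tfrac12)\pi/\omega$, inversion on a line $\Re\lambda=\sigma$ avoiding that spectrum, and uniformity of constants from the compactness of the frozen coefficients (the paper merely prefers to keep the frozen-coefficient form on the fixed sector $\Gamma_*$ and uses the flat Laplacian on $\Gamma$ only to compute the singular exponents). The one point you gloss over, and which the paper treats by an explicit induction, is that for $\sigma\geq1$ the contour shift does pick up residues at the integer poles of the data's Mellin transform — these are not absent but must be identified as polynomials via injectivity modulo polynomials — together with the limiting argument needed when $\sigma\in\N$; both are covered by the Kondrat'ev machinery you cite.
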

Here we have abused notation to write~$V^{1-\sigma}=\lB v\in H^{1-r}(\Gamma_*),v\rvert_{S_*}=0\rB$ and the~$I_{\sigma,\sigma'}$ have the obvious definitions. Their properties are the same, except that since~$\Gamma_*$ is not compact, they are only compact for functions with compact support.

The reason for the index~$\frac{\pi}{2\omega}$ is algebraic, and comes from the following. We let~$(r,\theta)$ be the polar coordinates in~$\Gamma$. 
For any~$\lambda$ in~$\C$, and any~$\sigma\in R$, we define
\begin{equation}	\label{eq:defsing}
S^{\lambda,\sigma}:=\lB v=\sum_{q=0}^Qr^\lambda\log^qrv_q(\theta);v_q\in H^{1+\sigma}([0,\omega_*])\rB.
\end{equation}
We say that such a function is a polynomial if it is polynomial in the coordinates~$z_1,z_2$. This is only possible for~$\lambda\in\N^*$, or if~$v=0$.
We can define~$L$ on~$S^{\lambda,\sigma}$ by testing against compactly supported function for the duality of~$V^{1-\sigma}$ or~$H^{1-\sigma}$, as is easily seen.
Then we say that for~$v\in S^{\lambda,\sigma}$, $Lv$ is polynomial if there exists polynomials~$(f,g)$ (resp.~$(f,g,h)$) such that~$L^{(0)}v=I_{0,\sigma}(f,g)$
(resp.~$L^{(0)}v=I_{0,\sigma}(f,g,h)$). If~$\lambda$ is not in~$\N^*$, then~$(f,g,h)$ is zero. At last, $L$ is said to be injective modulo polynomials if~$v\in S^{\lambda,\sigma}$ is polynomial as soon as~$(f,g,h)$ are polynomials. Again if~$\lambda$ is not an integer, this is just injectivity. 

\begin{lem}	\label{lem:singvap}
	Let~$\sigma>0$. Then~$L$ is injective modulo polynomials exactly for~$\lambda\neq(k+\frac12)\frac{\pi}{\omega}$, with~$k\in\Z$.
\end{lem}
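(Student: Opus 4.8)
The plan is to reduce the whole statement to a one-dimensional spectral computation for the operator pencil obtained by separating variables in polar coordinates $(r,\theta)$ on $\Gamma_*$. Set $\c P(\lambda):=\partial_\theta^2+\lambda^2$. A direct computation gives, for $v=\sum_{q=0}^{Q}r^{\lambda}\log^{q}r\,v_{q}(\theta)\in S^{\lambda,\sigma}$,
\[
Lv=r^{\lambda-2}\sum_{q=0}^{Q}\log^{q}r\,\Big(\c P(\lambda)v_{q}+2\lambda(q+1)v_{q+1}+(q+1)(q+2)v_{q+2}\Big)
\]
with the convention $v_{Q+1}=v_{Q+2}=0$, while the Dirichlet condition on $S_{*}$ and the Neumann condition on $B_{*}$ become $v_{q}(0)=0$ and $v_{q}'(\omega)=0$ (placing the surface edge at $\theta=0$). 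First I would check, running through the three regimes in the definition of $L^{(\sigma)}$ and using Green's formula together with the embeddings $I_{\sigma,\sigma'}$, that ``$L^{(0)}v=I_{0,\sigma}(\text{polynomials})$'' is exactly the requirement that these identities hold coefficient by coefficient with polynomial right-hand sides; for $\lambda\notin\N^{*}$ those right-hand sides are forced to vanish.

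The key computation is the spectrum of the model pencil. The operator $\c P(\lambda)$ on $[0,\omega]$ with the homogeneous mixed conditions $v(0)=0$, $v'(\omega)=0$ is formally self-adjoint and fails to be injective exactly when $v''=-\lambda^{2}v$, $v(0)=0$, $v'(\omega)=0$ has a nonzero solution. For $\lambda=0$ this forces $v=0$; for $\lambda\neq 0$ the solution is $v=A\sin(\lambda\theta)$ and the Neumann condition reads $A\lambda\cos(\lambda\omega)=0$, so there is a nonzero solution precisely when $\cos(\lambda\omega)=0$, i.e. $\lambda=(k+\frac12)\frac{\pi}{\omega}$, $k\in\Z$. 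Every singular exponent is therefore real, this is the only place the angle enters, and the smallest positive one is the threshold $\frac{\pi}{2\omega}$ of Proposition~\ref{prop:modelreg}.

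Next, the two directions. If $\lambda\neq(k+\frac12)\frac{\pi}{\omega}$, then $\c P(\lambda)$ with the homogeneous mixed conditions is invertible on the relevant Sobolev scale, so given $v\in S^{\lambda,\sigma}$ with polynomial data one solves the triangular system from $q=Q$ downwards; matching homogeneities — a polynomial has no $r^{\lambda-2}\log^{q}r$ component unless $\lambda\in\N^{*}$, and no logarithms at all — shows that $v$ differs from a polynomial particular solution by an element of the kernel of the homogeneous problem, which is $0$; hence $v$ is a polynomial (and $v=0$ when $\lambda\notin\N^{*}$). Conversely, if $\lambda=(k+\frac12)\frac{\pi}{\omega}$, I would exhibit a non-polynomial element of $S^{\lambda,\sigma}$ with polynomial data. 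When $\lambda\notin\N^{*}$, the harmonic function $v:=r^{\lambda}\sin(\lambda\theta)$ ($\theta$ measured from the Dirichlet edge) does it: it lies in $S^{\lambda,\sigma}$ for every $\sigma$, it satisfies the homogeneous Dirichlet and Neumann conditions, $Lv=r^{\lambda-2}\c P(\lambda)\sin(\lambda\theta)=0$, and it is not a polynomial since $\lambda\notin\N^{*}$. When $\lambda\in\N^{*}$ this function is itself a polynomial, so one must instead use the logarithmic generalized eigenfunction $v:=r^{\lambda}\big(\log r\,\sin(\lambda\theta)+\theta\cos(\lambda\theta)\big)$: a short computation shows it is still harmonic, its Dirichlet trace vanishes, and its Neumann trace on $B_{*}$ is a nonzero constant multiple of $r^{\lambda-1}$ and hence polynomial there, so its data is polynomial, while $v$ is not a polynomial. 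Either way $L$ is not injective modulo polynomials, which completes the equivalence.

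The spectral computation is elementary; the work lies in the bookkeeping. I expect the main obstacle to be, on the one hand, keeping the logarithmic tower under control when descending the triangular system and, on the other, making the equivalence between the weak formulations $L^{(\sigma)}$ and the pointwise ODE identities fully rigorous in each of the three ranges of $\sigma$ (this is where Green's identity and the duality between $H^{\sigma}$ and $H^{-\sigma}$ are used), together with treating the borderline exponents $\lambda\in\N^{*}$ on the same footing, so that passing ``modulo polynomials'' genuinely absorbs them.
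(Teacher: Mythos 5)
Your proposal is correct and follows essentially the same route as the paper: reduce to the triangular cascade of ODEs on the coefficients $v_q$ and identify the singular exponents as the spectrum of $\partial_\theta^2+\lambda^2$ with mixed Dirichlet--Neumann conditions, i.e. the zeros of $\cos(\lambda\omega)$. You in fact supply more detail than the paper does, notably the explicit counterexamples $r^\lambda\sin(\lambda\theta)$ and its logarithmic companion for the case $\lambda\in\N^*$, where the paper only asserts that non-injectivity of $\partial_\theta^2+\lambda^2$ ``yields immediately a counter-example.''
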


\begin{proof}
	It is easily seen that this notion is invariant by linear diffeomorphism, and therefore it is sufficient to check it on the euclidean Laplace operator in~$\Gamma$.
	Because of the classical ellipticity of~$\Delta$ in the~$\theta$ variable, this notion is independent of the regularity~$\sigma$. We can therefore check it for smooth~$v_q$.
	
	Then injectivity modulo polynomials can be computed to be a cascade of ODEs on the~$v_q$ of the form
	\[\partial_\theta^2v_q+\lambda^2u_q+2\lambda(q+1)v_{q+1}+(q+2)(q+1)v_{q+2}=0\]
	for~$q\geq1$, with~$v_{Q+1}=v_{Q+2}=0$. Then starting from~$q=Q$, we show that if~$\lambda\neq(k+\frac12)\frac{\pi}{\omega}$, all the~$v_q$ are~$0$ except~$v_0$, which is such that~$v$ is polynomial.
	If~$\lambda=(k+\frac12)\frac{\pi}{\omega}$, the fact that~$\partial_\theta^2+\lambda^2$ is not injective yields immediately a counter-example.
\end{proof}

The rest of the proof is based on the use of the Mellin transform, for which the properties in Appendix~A to~\cite{DaugeEllCor} will suffice. We only recall its definition,
\[\c M[u](\lambda)=\int_{\R} r^{-\lambda}u\frac{\d r}{r},\]
and the following proposition:
\begin{prop}
 \begin{itemize}
  \item If~$u\in H^\beta(\Gamma_*)$, with~$\beta<1$, and~$u$ is compactly supported, then~$\c M[u]$ is defined up to ~$\Re\lambda=\beta-1$ and analytic in~$\Re\lambda<\beta-1$ with values in~$H^\beta([0,\omega_*])$.
  \item If~$u\in H^\beta(\Gamma_*)$, with~$\beta\geq1$, and~$u$ is compactly supported, then~$\c M[u]$ is holomorphic in~$\Re\lambda<0$ with values in~$H^\beta([0,\omega_*])$. It can be meromorphically extended up to~$\Re\lambda<\beta-1$. Let~$U$ denote that extension. For~$\Re\lambda\in]k,k+1[$, $k\in\N$, $U$ coincides with~$\c M[u-P_ku]$, where~$P_ku$ denotes the Taylor series of order~$k$ of~$u$ at~$0$. Finally, the poles of~$U$ are simple and lie in~$k\in\N$ with~$k<\beta-1$, and we have
  \[\mathrm{Res}_{k=\lambda}r^\lambda U(\lambda)=P_{k-1}u-P_ku.\]
 \end{itemize}
\end{prop}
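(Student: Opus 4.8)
The plan is to reduce the whole statement to a one–dimensional question in the radial variable through the logarithmic substitution $r=e^{t}$, under which $\tfrac{\d r}{r}=\d t$, dilations become translations, and $\c M[u](\lambda)$ becomes the Fourier–Laplace transform $\int_{\R}e^{-\lambda t}v(t,\theta)\,\d t$ of $v(t,\theta):=u(e^{t},\theta)$. Since $u$ is compactly supported in $r$, the function $v$ is supported in a half–line $\{t\leq T\}$, so the only issue is the behaviour as $t\to-\infty$, i.e. as $r\to0$. Once the correct decay of $v$ (equivalently the weighted behaviour of $u$ near the vertex) is identified, holomorphy in a half–plane $\Re\lambda<c$ and the value of $c$ follow from a Paley–Wiener argument: the integrand depends holomorphically on $\lambda$, an absolute bound locally uniform in $\lambda$ lets one differentiate under the integral (or invoke Morera together with Fubini), and the values lie in $H^{\beta}([0,\omega_*])$ because $\partial_{\theta}$ commutes with the radial integral — with real interpolation used to cover non–integer $\beta$, exploiting the Kondrat'ev-type splitting of $H^{\beta}$ regularity between the radial and angular directions.

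For the first item, the ingredient to be supplied is a fractional Hardy estimate near the conical vertex: for $u\in H^{\beta}(\Gamma_*)$, $0\leq\beta<1$, compactly supported, one has $\int_{0}^{\infty}\|u(r,\cdot)\|_{H^{\beta}([0,\omega_*])}^{2}\,r^{1-2\beta}\,\d r\lesssim\|u\|_{H^{\beta}(\Gamma_*)}^{2}$, which holds precisely because $\beta<1$ (the case $\beta=1$ being the usual endpoint failure of Hardy's inequality in the plane), the exponent $r^{1-2\beta}$ and hence the shift by $1$ in the strip coming from the Jacobian $r\,\d r\,\d\theta$ of polar coordinates. This is exactly the content of the relevant statement in Appendix~A of~\cite{DaugeEllCor}. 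Inserting this bound into $|\c M[u](\lambda)(\theta)|\leq\int_{0}^{\infty}r^{-\Re\lambda-1}|u(r,\theta)|\,\d r$ via Cauchy–Schwarz, and doing the same after applying tangential derivatives, one finds that the radial integral converges, locally uniformly in $\lambda$ and with values in $H^{\beta}([0,\omega_*])$, exactly when $\Re\lambda<\beta-1$; holomorphy on that half-plane then follows, which proves the first bullet.

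For the second item, the obstruction is precisely that when $\beta\geq1$ the function $u$ need not vanish at the vertex, so the radial integral converges only for $\Re\lambda<0$. To continue it, fix a cutoff $\chi$ equal to $1$ near $0$ and supported near $0$, and for each integer $k<\beta-1$ write $u=\chi P_{k}u+(u-\chi P_{k}u)$, with $P_{k}u=\sum_{j=0}^{k}r^{j}c_{j}(\theta)$ the Taylor expansion of $u$ at the vertex (well defined since $\beta>k+1$ by Sobolev embedding). The remainder $u-\chi P_{k}u$ is compactly supported, lies in $H^{\beta}(\Gamma_*)$, and vanishes to order $k+1$ at the vertex, so by the first item applied to it its Mellin transform is holomorphic for $\Re\lambda<\min(k+1,\beta-1)$, hence on a neighbourhood of the strip $\Re\lambda\in\,]k,k+1[$. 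The polynomial part is treated term by term: $\c M[\chi(r)r^{j}c_{j}(\theta)](\lambda)=c_{j}(\theta)\int_{0}^{\infty}\chi(r)r^{j-\lambda-1}\,\d r$, and a direct computation (split off the piece of $\chi$ equal to $1$ on $[0,1]$) exhibits the scalar integral as $\tfrac{1}{j-\lambda}$ plus an entire function, hence meromorphic with a single simple pole, at $\lambda=j$, of residue $-1$. Adding these up yields the meromorphic extension $U$ up to $\Re\lambda<\beta-1$, whose poles are simple and contained in $\{k\in\N:k<\beta-1\}$; since $r^{\lambda}$ is entire in $\lambda$, $\mathrm{Res}_{\lambda=k}\,r^{\lambda}U(\lambda)=-r^{k}c_{k}(\theta)=P_{k-1}u-P_{k}u$. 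To identify $U$ with $\c M[u-P_{k}u]$ on the strip $\Re\lambda\in\,]k,k+1[$, one observes that the latter integral converges absolutely there — order-$(k+1)$ vanishing handles $r=0$, while $u-P_{k}u=-P_{k}u$ for large $r$, integrable against $r^{-\lambda}\tfrac{\d r}{r}$ since $\Re\lambda>k\geq j$ for every $j$ occurring — and that the difference of the two analytic continuations is the regularized Mellin transform of the homogeneous polynomial $P_{k}u$, which the same cutoff computation shows vanishes identically off the integers $\leq k$; thus the two agree on that strip.

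The step I expect to be the real obstacle is the fractional Hardy / Kondrat'ev estimate underpinning the first item: obtaining the precise radial weight $r^{1-2\beta}$ while simultaneously keeping the full $H^{\beta}([0,\omega_*])$-valued control of $\c M[u](\lambda)$ forces one to distribute the $H^{\beta}$ regularity between the radial and angular directions and to interpolate at non–integer $\beta$. Everything afterwards — the explicit Mellin transform of a monomial, reading off the poles and residues, and matching $U$ with $\c M[u-P_{k}u]$ strip by strip — is elementary complex analysis and bookkeeping.
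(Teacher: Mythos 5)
Your argument is correct, but it is worth pointing out that the paper does not prove this proposition at all: it is quoted verbatim as a known result from Appendix~A of~\cite{DaugeEllCor}, and the text explicitly defers to that reference. What you have written is essentially a reconstruction of the standard proof that lives there: the logarithmic substitution turning $\c M$ into a Fourier--Laplace transform, the fractional Hardy/Kondrat'ev weighted estimate $\int_0^\infty\lA u(r,\cdot)\rA^2_{H^\beta([0,\omega_*])}r^{1-2\beta}\d r\les\lA u\rA^2_{H^\beta(\Gamma_*)}$ (valid since $\beta<1=n/2$) to locate the abscissa $\Re\lambda=\beta-1$, and for $\beta\geq1$ the cutoff Taylor subtraction $u=\chi P_ku+(u-\chi P_ku)$ together with the explicit computation $\c M[\chi r^jc_j](\lambda)=c_j/(j-\lambda)+\text{entire}$, which yields the simple poles, the residues $-r^kc_k=P_{k-1}u-P_ku$, and the strip-by-strip identification with $\c M[u-P_ku]$ via the vanishing of the regularized Mellin transform of a monomial. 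The one place where your write-up is looser than it should be is the claim that the first item applies to $u-\chi P_ku$ when $\beta\geq1$: the first bullet is stated only for $\beta<1$, and what you actually need is the higher-order version of the weighted estimate for an $H^\beta$ function vanishing to order $k+1$ at the vertex (with the usual caveat at the critical exponents $\beta-1\in\N$, where logarithmic corrections appear and which is precisely why the proposition treats the strips $]k,k+1[$ separately). That higher-order Hardy inequality is the genuinely nontrivial analytic input, and it is exactly the statement the paper outsources to Dauge's appendix; everything else in your proof is, as you say, elementary complex analysis and bookkeeping, and it is sound.
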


There is an inversion formula,
\[u(z)=\frac1{2\pi}\int_{\R} r^{-\eta+i\zeta}\c M[u](-\eta+i\zeta)\d\zeta,\]
valid on any vertical line in the domain of holomorphy of~$\c M[u]$. At last, one can measure the~$H^\beta(\Gamma_*)$ regularity of~$u$ from the~$L^2$ norm in~$\zeta$ of the~$H^\beta([0,\omega_*],\la\zeta\ra+1)$ norm in~$\eta$ of~$\c M[u](-\eta+i\zeta)$. Here and in the following, $H^\beta([0,\omega_*],\rho)$ is the~$H^\beta$ space endowed with the norm
\[\lA f\rA^2_{\beta,\rho}=\rho^{2\beta}\lA f\rA_{L^2}^2+\lA D^{\lfloor\beta\rfloor}f\rA_{H^{\beta-\lfloor\beta\rfloor}}^2.\]

The interest of the Mellin transform is that it changes~$r\partial_r$ into~$\lambda$, so that the problem
 \begin{equation*} 	
	\lB
	\begin{aligned}
		&\Delta u=g\text{ in }\Gamma,\\
		&u\rvert_{\theta=0}=f,\\
		&\nab\nu u\rvert_{\theta=\omega}=h,
	\end{aligned}
	\right.
\end{equation*}
would become, through multiplication by the appropriate power of~$r$ and the Mellin transform,
\begin{equation*} 	
	\lB
	\begin{aligned}
		&(\partial_\theta^2+\lambda^2) U=\c M[r^2g]\text{ in }\Omega,\\
		&U\rvert_{\theta=0}=\c M[f],\\
		&\partial_\theta u\rvert_{\theta=\omega}=\c M[rh].
	\end{aligned}
	\right.
\end{equation*}
Of course, we need to work in~$\Omega_*$ and with the associated form~$l$.

To simplify notations, we concentrate on the case~$s-1\geq\sigma>\frac12$, the proof of the other cases being similar. 

So assume we are in the hypotheses of Proposition~\ref{prop:modelreg}. Then in Mellin, we give the following definitions.
\begin{itemize}
	\item The Mellin transform of~$u$ is~$U(\lambda)$, which is holomorphic in~$\Re \lambda<0$ and defined up to~$\Re \lambda=0$, with values in~$H^1([0,\omega_*])$.
	\item The Mellin transform of~$(f,r^2g,rh)$ is~$(F,G,H)(\lambda)$, which is meromorphic on~$\Re \lambda<\sigma$, defined up to~$\Re\lambda=\sigma$, with values in~$\R\times(H^{1-\sigma}([0,\omega_*]))'\times\R$ (or~$\R\times H^{\sigma-1}([0,\omega_*])\times\R$). Its poles are concentrated at the non-negative integer values of~$\lambda$. 
	\item The operator~$L^{(0)}$ becomes a holomorphic family of operators~$\c L^{(0)}(\lambda):H^1([0,\omega_*])\rightarrow\R\times (V([0,\omega_*]))'$ defined by
	\[\lp \c L^{(0)}(\lambda)v\rp(w):=(v(\omega_*),l(\lambda)(v,w)),\]
	with~$l(\lambda)$ a holomorphic family of forms on~$V([0,\omega_*])$ with constant coefficients, bounded as~$\c S$ varies in~$\Lambda_*$.
	\item The~$I_{0,\sigma}$ are transformed in a holomorphic family of compact operators~$\c I_{0,\sigma}(\lambda)$.
\end{itemize}

Now the equation on~$u$ becomes
\[\c L^{(0)}(\lambda)U(\lambda)=\c I_{0,\sigma}(\lambda)(F(\lambda),G(\lambda),H(\lambda)).\]
It is valid on the common domain of holomorphy of those functions, which is~$\Re \lambda<0$.

\begin{lem}	 \label{lem:largelambda}
	For any real numbers~$\alpha<\beta$, there is a constant~$C_{\alpha,\beta}$ uniform in~$\Lambda_*$ such that for any~$\alpha<\Re\lambda<\beta$ with~$\la\Im\lambda\ra>>1$, again uniformly in~$\Lambda_*$, $\c L^{(0)}(\lambda)$ is invertible and satisfies
	\[\lA\c L^{(0)}(\lambda)^{-1}(F,G)\rA_{H^1}\leq C_{\alpha,\beta}\lb\la F\ra+\lA G\rA_{V'}\rb.\]
	The family~$\c L^{(0)}(\lambda)^{-1}$ is meromorphic in~$\C$.
\end{lem}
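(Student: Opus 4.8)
\emph{Approach.} The idea is to treat $\c L^{(0)}(\lambda)$, for $\la\Im\lambda\ra$ large and $\Re\lambda$ confined to a bounded strip, as a perturbation of lower order in $\lambda$ of the scalar ``model'' pencil on the interval $[0,\omega_*]$, and to obtain the meromorphic continuation to all of $\C$ from the analytic Fredholm theorem. For each $\lambda$ write $\c L^{(0)}(\lambda)=\c L_{\mathrm{top}}+\c R(\lambda)$, where $\c L_{\mathrm{top}}v:=(v(\omega_*),\,w\mapsto l_{\mathrm{top}}(v,w))$ retains only the part of $l$ carrying $\partial_\theta^2$ (no power of $\lambda$), and $\c R(\lambda)$ gathers the terms with one or two powers of $\lambda$. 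The operator $\c L_{\mathrm{top}}:H^1([0,\omega_*])\to\R\times(V([0,\omega_*]))'$ is a one-dimensional mixed Dirichlet--Neumann two-point problem with a constant, uniformly elliptic, coefficient, hence an isomorphism, with norm and inverse-norm bounded uniformly over $\Lambda_*$ because the finitely many coefficients of $l$ lie in a fixed compact set. Since $H^1([0,\omega_*])$ embeds compactly into $L^2([0,\omega_*])\hookrightarrow(V([0,\omega_*]))'$, the map $\c R(\lambda)$ is, for each $\lambda$, compact and holomorphic in $\lambda$. Thus $\lambda\mapsto\c L^{(0)}(\lambda)$ is a holomorphic family of Fredholm operators of index $0$ on $\C$; once one value of $\lambda$ with $\c L^{(0)}(\lambda)$ invertible is exhibited (Step~3 below), the analytic Fredholm theorem gives that $\lambda\mapsto\c L^{(0)}(\lambda)^{-1}$ is meromorphic on $\C$, with poles exactly at the $\lambda$ where $\c L^{(0)}(\lambda)$ fails to be injective.

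\emph{Location of the non-invertibility set.} By the invariance of this construction under linear diffeomorphisms --- the same one used to reduce $l$ to the Euclidean form on $\Gamma$ --- together with Lemma~\ref{lem:singvap}, $\c L^{(0)}(\lambda)$ is injective precisely for $\lambda\notin\lB(k+\mez)\frac{\pi}{\omega}:k\in\Z\rB$; in particular the non-invertibility set lies on the real axis. Hence in the strip $\lB\alpha<\Re\lambda<\beta\rB$ it is finite, and there is a threshold $T_0=T_0(\alpha,\beta)$, uniform over $\Lambda_*$ since $\omega\in[\underline\omega,\overline\omega]$, such that $\c L^{(0)}(\lambda)$ is invertible whenever $\alpha<\Re\lambda<\beta$ and $\la\Im\lambda\ra\geq T_0$.

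\emph{The uniform bound for large $\la\Im\lambda\ra$.} It remains to quantify this invertibility. Write $\lambda=\mu+i\tau$ with $\alpha<\mu<\beta$ and $\la\tau\ra$ large. One first treats the scalar model operator $\c L_0(\lambda)$ attached to $-\partial_\theta^2-\lambda^2$ with the mixed boundary condition: its inverse is given explicitly by variation of parameters, the relevant Wronskian-type denominator being a nonzero entire function of $\lambda$ which, for $\mu$ in a bounded set, is bounded below by $c\,e^{\la\tau\ra\omega}$; inserting this into the solution formula, and splitting $(F,G)$ into the explicit homogeneous lift $v_0$ of $F$ (chosen to satisfy the homogeneous Neumann condition, so that $l(\lambda)(v_0,w)=0$ for every $w\in V$ and $v_0$ decouples from $G$) plus the part driven by $G$, one gets the bound of the statement in the $(\la\tau\ra+1)$-weighted norms, with constant depending only on $\underline\omega$ and $\overline\omega$. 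Finally, in these weighted norms $\c L_0(\lambda)^{-1}$ recovers the one or two powers of $\la\lambda\ra$ carried by $\c R(\lambda)$, so that in the factorisation $\c L^{(0)}(\lambda)=\c L_0(\lambda)\lp\Id+\c L_0(\lambda)^{-1}(\c L^{(0)}(\lambda)-\c L_0(\lambda))\rp$ the perturbation has operator norm $<\mez$ once $\la\tau\ra$ is large enough --- uniformly over $\Lambda_*$, since the coefficients of $l$ range over a fixed compact set and both $\c L_0(\lambda)$ and the correction vary continuously with them. A Neumann series then yields the invertibility of $\c L^{(0)}(\lambda)$ and the stated estimate for $\la\tau\ra\geq T_0$, with $C_{\alpha,\beta}$ uniform in $\Lambda_*$.

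\emph{Main obstacle.} The delicate ingredient is the last step: producing the explicit large-$\la\Im\lambda\ra$ estimate for the model operator $\c L_0(\lambda)$ in the $\la\lambda\ra$-weighted norms --- the regime in which the spectral parameter $\lambda^2$ is far from the (real) spectrum of $-\partial_\theta^2$ with these boundary conditions, while $-\partial_\theta^2-\lambda^2$ is itself not coercive --- and then propagating it with constants that depend only on the compact range of the coefficients of $l$ over $\Lambda_*$, and not on the individual surface. These are exactly the parameter-dependent Mellin estimates catalogued in Appendix~A of~\cite{DaugeEllCor}.
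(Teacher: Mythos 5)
Your meromorphy argument is fine and is essentially the paper's: both write $\c L^{(0)}(\lambda)$ as a fixed invertible operator plus a holomorphic family of compact perturbations and invoke the analytic Fredholm alternative (the paper perturbs around $\c L^{(0)}(\lambda_0)$ for a single invertible $\lambda_0$ rather than around the $\lambda$-independent top part, which is cosmetic). The genuine gap is in your quantitative step for $\la\Im\lambda\ra$ large. The form $l$ is \emph{not} the Euclidean form on $\Gamma_*$: its coefficients are the frozen $\alpha_{i,j}(x)$ of the pulled-back metric, so after polar coordinates and Mellin the operator reads $c(\theta)\partial_\theta^2+b(\theta)\lambda\partial_\theta+a(\theta)\lambda^2+\dots$ rather than $\partial_\theta^2+\lambda^2$. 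Hence $\c L^{(0)}(\lambda)-\c L_0(\lambda)$ carries the \emph{same} strength in $(\partial_\theta,\lambda)$ as $\c L_0(\lambda)$ itself; since $\c L_0(\lambda)^{-1}$ gains exactly two powers of $\la\lambda\ra$ and no more, the operator $\c L_0(\lambda)^{-1}\lp\c L^{(0)}(\lambda)-\c L_0(\lambda)\rp$ stays of size $O(1)$ as $\la\Im\lambda\ra\to\infty$; it does not become small. Your Neumann series therefore does not close: there is no smallness to be extracted from $\la\Im\lambda\ra$ being large, and compactness of the coefficient range gives boundedness of the perturbation, not smallness.

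There are two correct ways to repair this. The paper's route is to prove a Gårding-type inequality $\lA u\rA^2_{H^1(\la\lambda\ra)}\leq C_{\alpha,\beta}\,\Re l(\lambda)(u,u)$ for $\la\Im\lambda\ra$ large, inherited from the coercivity of the original form $a$ on $\Omega$ through successive freezing, rescaling and separation of variables (testing against $\psi(y)w(z)$ and then against $\chi(r)r^\lambda u(\theta)$), and then to conclude by Lax--Milgram; this handles arbitrary uniformly elliptic frozen coefficients at once and yields the uniformity in $\Lambda_*$. Alternatively, your explicit variation-of-parameters computation (with the Wronskian $\lambda\cos(\lambda\omega)$ bounded below by $c\la\lambda\ra e^{\la\Im\lambda\ra\omega}$) is legitimate for the genuinely Euclidean operator, and the invariance noted in the paper — $l$ is the push-forward of the Euclidean form on the sector $\Gamma$ of angle $\omega$ under a linear map, which preserves $r\partial_r$ and hence the Mellin parameter — lets you transfer that exact computation instead of perturbing. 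A last small point: Lemma~\ref{lem:singvap} gives injectivity only modulo polynomials at integer $\lambda$, so the possible non-invertibility set is $\lB(k+\mez)\frac{\pi}{\omega}\rB\cup\N$ rather than just the half-integer multiples of $\pi/\omega$; this set is still real, so the conclusion of your localisation step survives.
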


\begin{proof}
	The first step is to deduce the coercivity of~$l$ from the one of~$a$. 
	We can find an~$H^{s+\frac12}$ diffeomorphism sending a neighborhood of our point~$x$ to a neighborhood of~$0$ in~$\R^{n-2}\times\Gamma_*$ such that the frozen forms it produce are the~$p$ and~$l$ already defined. It suffices to compose our local coordinates map with the inverse of its linearized at~$x$.
	
	Then the coefficients of this form~$a$ are regular enough that
	\[\la p(u,u)-a(u,u)\ra\leq C\lb\rho\lA u\rA^2_{H^1}+\lA u\rA_{L^2}^2\rb\]
	for any~$u\in V$ with support in~$B(0,\rho)$,$\rho\leq1$. Combined with the coercivity of~$a$, this yields
	\[\lA u\rA^2_{H^1}\leq C\lB p(u,u)+\lA u\rA^2_{L^2}\rB\]
	if~$\rho$ is small enough. Applying this to~$v=u(\frac1\rho\cdot)$ and letting~$\rho$ go to~$0$ gives
	\[\lA v\rA^2_{H^1}\leq C\lB p(v,v)+\lA v\rA^2_{L^2}\rB\]
	for~$v$ compactly supported, and thus by density for any~$v$. Then
	applying this to~$v(y,z)=\psi(y)w(z)$ with~$\psi$ compactly supported, and equal to one on the unit ball of~$\R^{n-2}$, we get
	\[\lA w\rA^2_{H^1}\leq C\lB l(w,w)+\lA w\rA_{H^1}\lA w\rA_{L^2}\rB.\]
	Applying this result to~$w(z)=\chi(r)r^\lambda u(\theta)$, for~$\chi$ compactly supported, equal to~$1$ near~$r=1$ and to~$0$ near~$r=0$, and for~$\alpha<\Re\lambda<\beta$, this yields
	\[\lA u\rA^2_{H^1(\la\lambda\ra)}\leq C_{\alpha,\beta}\lB\Re l(\lambda)(u,u)+\lA u\rA_{H^1(\la\lambda\ra)}\lA u\rA_{L^2}\rB.\]
	Remarking that~$\lA u\rA_{L^2}\leq C\lambda^{-1}\lA u\rA_{H^1(\la\lambda\ra)}$, we get for~$\lambda$ big enough
	\[\lA u\rA^2_{H^1(\la\lambda\ra)}\leq C_{\alpha,\beta}\lB\Re l(\lambda)(u,u)\rB.\]
	This holds for~$u\in V([0,\omega_*])$ and is the required coercivity.
	This is enough to prove the inversibility of~$\c L(\lambda)$ and the accompanying estimates. 
	
	For the meromorphy, we start by observing that for~$\lambda,\lambda'\in\C$, the operator~$\c L^{(0)}(\lambda)-\c L^{(0)}(\lambda')$ is compact. Then if~$\lambda_0$ is such that~$\c L^{(0)}(\lambda_0)$ is invertible,  we can write
	\[\c L^{(0)}(\lambda)=\c L^{(0)}(\lambda_0)\circ(I+\c Z(\lambda))\]
	with
	\[\c Z(\lambda)=\c L^{(0)}(\lambda_0)^{-1}\circ(\c L^{(0)}(\lambda)-\c L^{(0)}(\lambda_0)).\]
	Thus~$\c Z(\lambda)$ is a holomorphic family of compact operators, with~$I+\c Z(\lambda_0)=I$ invertible. This implies the meromorphy of the family of index~$0$ Fredholm operators~$\c L^{(0)}(\lambda)$, and therefore the meromorphy of~$\c L^{(0)}(\lambda)^{-1}$. 
\end{proof}

Then we need to prove the regularity of the family~$\c L$.
\begin{lem}
	For any~$\Re\lambda\in[0,\sigma]$, if~$U\in H^1([0,\omega_*])$ is such that
	\[L^{(0)}(\lambda)U=\c I_{0,\sigma}(\lambda)(F,G,H),\]
	then~$U\in H^{\sigma+1}([0,\omega_*])$, and
	\[\lA U\rA_{H^{\sigma+1}}\leq C\lb\la F\ra+\lA G\rA_{(H^{1-\sigma})'}+\la H\ra\rb\]
	with~$C$ uniform in~$\lambda$ and in~$\Lambda_*$.
\end{lem}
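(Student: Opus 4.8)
The assertion is one-dimensional elliptic regularity for the holomorphic family~$\c L^{(0)}(\lambda)$ on the fixed interval~$[0,\omega_*]$; since the given~$U$ is already an~$H^1$ solution, the content lies entirely in its regularity, and the only real difficulty is that the constant must not deteriorate as~$\lambda$ ranges over the strip, so throughout I would use the~$\la\lambda\ra$-weighted norms~$\lA\cdot\rA_{\beta,\la\lambda\ra}$ adapted to the Mellin inversion. First I would reduce to the Euclidean model: by the invariance under linear diffeomorphisms already exploited in the proof of Lemma~\ref{lem:singvap}, and since the reducing map stays uniformly bounded in~$\Lambda_*$ (the coefficients of~$l(\lambda)$ inhabit a compact set), one may take~$\c L^{(0)}(\lambda)$ to be attached to~$-\partial_\theta^2-\lambda^2$ on~$[0,\omega]$, with Dirichlet datum~$U(\omega)=F$ at the surface edge, Neumann datum~$\partial_\theta U(0)=H$ at the bottom edge, and~$\omega$ the contact angle, ranging over~$[\underline\omega,\overline\omega]$. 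Then split~$U=v_F+v_H+w$, where~$v_F(\theta)=F\cos(\lambda\theta)/\cos(\lambda\omega)$ and~$v_H(\theta)=H\sin(\lambda(\theta-\omega))/(\lambda\cos(\lambda\omega))$ are the explicit profiles carrying the boundary data (holomorphic in~$\lambda$ away from the singular exponents), and~$w$ has homogeneous mixed conditions, so that~$w=\sum_j\hat G_j(\mu_j-\lambda^2)^{-1}e_j$ in the eigenbasis~$e_j(\theta)=\cos(k_j\theta)$ of~$-\partial_\theta^2$ with these conditions, $k_j=(j+\mez)\pi/\omega$, $\mu_j=k_j^2$.

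Then I would estimate the three pieces uniformly on~$0\le\Re\lambda\le\sigma$. For~$w$ everything collapses to the single multiplier inequality~$(\la\lambda\ra^2+\mu_j)^2\le C\la\mu_j-\lambda^2\ra^2$ for all~$j$, with~$C$ independent of~$\lambda$; this is where the hypothesis bites, since~$\sigma<\frac\pi{2\omega}$ (guaranteed uniformly over~$\Lambda_*$ by~$\sigma<\frac\pi{2\overline\omega}$) forces~$\Re(\lambda^2)\le\sigma^2<(\pi/(2\omega))^2=\mu_0\le\inf_j\mu_j$ on the whole strip, whence~$\la\mu_j-\lambda^2\ra\ge\mu_j+(\Im\lambda)^2-\sigma^2\gtrsim\mu_j+\la\lambda\ra^2$; equivalently, Lemma~\ref{lem:singvap} says the strip contains no singular exponent~$(k+\mez)\frac\pi\omega$, so~$\c L^{(0)}(\lambda)$ is genuinely invertible there. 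For~$v_F$ and~$v_H$ the same fact gives~$\la\cos(\lambda\omega)\ra\geq\la\cos(\sigma\omega)\ra>0$ on the strip (using~$\sigma\omega<\pi/2$), while these profiles concentrate at scale~$\la\lambda\ra^{-1}$ near the relevant endpoint, so a direct computation bounds them in~$\lA\cdot\rA_{\sigma+1,\la\lambda\ra}$ by~$C\la F\ra$, resp.\ $C\la H\ra$, with the~$\lambda$-weights carried by~$F$, $H$. The identification of this~$U$ with the given~$H^1$ solution uses the a priori~$H^1$ bound: Lemma~\ref{lem:largelambda} when~$\la\Im\lambda\ra$ is large, and for~$\la\Im\lambda\ra$ bounded it follows since~$\lambda$ then lies in a compact set on which~$\c L^{(0)}(\lambda)^{-1}$ is holomorphic, its only possible poles being the absent singular exponents. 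Transporting back through the reducing diffeomorphism finishes the proof.

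I expect the uniformity in~$\lambda$ to be the main obstacle: the zeroth-order term~$\lambda^2U$ has the same strength as~$\partial_\theta^2U$, so one cannot bootstrap by reinjecting it as a lower-order source --- one must invert the full~$\lambda$-dependent operator at once, in the weighted norms, and it is the pole-free strip (equivalently~$\sigma<\pi/(2\overline\omega)$) that keeps all the constants bounded. A variant that avoids the eigenbasis, and handles the general frozen form~$l(\lambda)$ directly without the Euclidean reduction, is to write~$U$ through the explicit Green's function of~$\c L^{(0)}(\lambda)$ on~$[0,\omega_*]$ --- continuous, piecewise smooth, with a unit jump of~$\partial_\theta$ across the diagonal --- and read off the two derivatives gained on the~$G$-contribution and the~$\la\lambda\ra$-uniform regularity of the~$F$- and~$H$-contributions; the required kernel bounds are again Lemma~\ref{lem:largelambda} together with the absence of poles on the strip. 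The remaining cases~$0\le\sigma<\mez$ and~$\mez<\sigma<1$ go through identically, only the functional meaning of the datum~$G$ changing, as in the text.
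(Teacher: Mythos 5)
Your route is correct in substance but genuinely different from the paper's. The paper's proof is soft: it lifts $U$ to the two-dimensional function $w(z)=\chi(r)r^\lambda U(\theta)$, supported in an annulus away from the corner, applies classical interior and smooth-boundary elliptic regularity for the constant-coefficient operator $L$ there (the coefficients inhabiting a compact set as $\c S$ varies in $\Lambda_*$), and reads off the $\langle\lambda\rangle$-weighted estimate on $U$ from the fact that $r$-derivatives of $r^\lambda U$ produce factors of $\lambda$ --- the same device as in Lemma~\ref{lem:largelambda}. That argument only yields an a priori estimate with a lower-order term $\lA U\rA_{H^1}$ on the right; its removal (uniform invertibility on the compact part of the strip) is supplied later in the text from pointwise injectivity plus compactness. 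You instead reduce to the Euclidean model and diagonalize the ODE explicitly, which proves the stated estimate directly, invertibility included, and makes transparent where $\sigma<\pi/(2\overline\omega)$ enters.

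Two steps you leave implicit deserve care, though neither is fatal. First, the conjugation taking $\c L^{(0)}(\lambda)$ on $[0,\omega_*]$ to $-\partial_\theta^2-\lambda^2$ on $[0,\omega]$ acts on angular profiles by $U\mapsto\rho^\lambda\cdot(U\circ\psi)$ for a smooth positive $\rho$ and a diffeomorphism $\psi$; one must check this is uniformly bounded, with uniformly bounded inverse, on $H^\beta([0,\omega_*],\langle\lambda\rangle)$ for $\Re\lambda\in[0,\sigma]$ (it is, since $\la\rho^\lambda\ra=\rho^{\Re\lambda}$ and each $\theta$-derivative costs a factor of $\lambda$ absorbed by the weight); the paper deliberately sidesteps this by working on the fixed $\Gamma_*$ and using the Euclidean picture only for the algebraic Lemma~\ref{lem:singvap}. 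Second, ``everything collapses to the multiplier inequality'' is literally true only for the $L^2$-based part of the weighted norm: when $\sigma-1>\mez$ the quantity $\sum_j(\langle\lambda\rangle^2+\mu_j)^{\sigma-1}\la\hat G_j\ra^2$ is not controlled by $\lA G\rA^2_{\sigma-1,\langle\lambda\rangle}$ unless $G$ satisfies boundary compatibility conditions, so the top derivatives of $w$ must be recovered by the standard bootstrap from $w''=-G-\lambda^2 w$ rather than read off the eigenexpansion. With those two routine verifications added, and with your identification of the explicit solution with the given $H^1$ one via injectivity on the pole-free strip, the proposal stands as a valid and in fact more quantitative alternative to the paper's argument.
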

\begin{proof}
	Because the~$L$ are constant coefficients, their coefficients when~$\c S$ varies in~$\Lambda$ inhabit a compact set. Therefore it is sufficient to prove this for a fixed~$\c S\in\Lambda_*$, with a constant that is an increasing function of the supremum of the coefficients.
	The operator family~$L$ is elliptic, and with constant coefficients. The classical elliptic regularity theory can be applied far from~$0$, near any interior and boundary point. The constant depends on the coefficients continuously. Then applying this regularity to~$w(z)=\chi(r)r^\lambda u(\theta)$ as above, this function being compactly supported away from~$0$, we immediately find the announced regularity. 
\end{proof}

Then we want to define~$U(\lambda)$, which only made sense for~$\Re\lambda\leq0$, in the half-plane~$\Re\lambda\leq\sigma$.  The formula
\begin{equation}	\label{eq:extmero}
	U(\lambda)=\c L^{(\sigma)}(\lambda)^{-1}(F(\lambda),G(\lambda),H(\lambda))
\end{equation}
agrees with the definition of~$U(\lambda)$ in the left half-plane, because
\[\c L^{(0)}\rvert_{H^{1+\sigma}}=I_{0,\sigma}\circ \c L^{(\sigma)}.\]
Also we have already seen that~$\c L^{(0)}(\lambda)^{-1}$ is meromorphic. Its poles are the places where~$\c L^{(0)}(\lambda)$ fails to be injective. It is immediate that the injectivity of~$\c L^{(0)}$ on~$H^1([0,\omega_*])$ is equivalent to the injectivity of~$L$ on~$S^{\lambda,0}$, the space defined in~\eqref{eq:defsing}. Because of Lemma~\ref{lem:singvap}, we know that for~$\sigma<\frac{\pi}{2\omega}$, this injectivity is true for any~$\lambda$, except for the integers, where only injectivity modulo polynomials holds. Therefore the only possible poles of~$\c L^{(0)}(\lambda)^{-1}$ when~$\sigma$ is in the range of Proposition~\ref{prop:modelreg} are the integers between~$0$ and~$\sigma$.

Because of the preceding Lemma, $\c L^{(\sigma)}(\lambda)^{-1}$ is also meromorphic with the same poles. Then because~$F$, $G$ and~$H$ where also meromorphic with integer poles, \eqref{eq:extmero} define a meromorphic extension of~$U$ to~$\Re\lambda<\sigma$, with integer poles.

We want to take the inverse Mellin transform of~$U(\lambda)$ along~$\Re\lambda=\sigma$. However, because of the possible presence of a pole at~$\sigma$ when~$\sigma\in\N$, we need to be careful.

First, take~$\sigma'\leq\sigma$ that is not an integer. If already~$\sigma$ is not an integer, we can take~$\sigma'=\sigma$.
Then~$U(\lambda)$ is meromorphic on~$\Re\lambda\leq\sigma'$. 

Taking the inverse Mellin transform along~$\Re\lambda=\sigma'$, we find a function~$u_0\in H^{\sigma'+1}_0(\Gamma_*)$, with norm bounded by
\[\int_{\Re\lambda=\sigma'}\lA U(\lambda)\rA^2_{\sigma'+1,\la\lambda\ra}\d\lambda.\]
However 
\[\lA U(\lambda)\rA^2_{\sigma'+1,\la\lambda\ra}\leq C\lb\la F(\lambda)\ra+\lA G(\lambda)\rA_{\sigma'-1,\la\lambda\ra}+\la H(\lambda)\ra\rb,\]
the value for large~$\lambda$ coming from Lemma~\ref{lem:largelambda} and the elliptic regularity, while for the other values of~$\lambda$ we use the qualitative information that~$\c L^{(s)}$ is invertible to find the equality with constants~$C_\lambda$ and then we take the supremum of those constants, because~$\lambda$ is in a compact set. This constant is also uniform in~$\Lambda_*$ because the coefficients of the problems are in a compact set.
This means that
\[\lA u_0\rA_{\sigma'+1}\leq C\lb \lA f\rA_{\sigma+\frac12}+\lA g\rA_{\sigma-1}+\lA h\rA_{\sigma-\frac12}\rb.\]

Now because of the Cauchy formula between~$\Re\lambda=0$ and~$\Re\lambda=\sigma'$, we find
\[u-u_0=2i\pi\sum_{\mu=0}^{\lfloor\sigma'\rfloor}\mathrm{Res}_{\lambda=\mu}r^\lambda\c L^{(\sigma')}(\lambda)^{-1}(F(\lambda),G(\lambda),H(\lambda)).\]

Then writing~$u_\mu$ for the~$\mu$th residue, we see that for~$\chi$ a cutoff at ~$0$, for~$\mu>0$, $\chi u_\mu\in V$. Since~$u\in V$, we deduce immediately that~$\chi u_0\in V$ and therefore~$u_0$ is a polynomial. We want to prove by recurrence that all the~$u_k$ are polynomials. By subtracting the ones already known to be polynomials, it is sufficient to assume that
\[\forall \mu\leq k-1,\mathrm{Res}_{\mu=\lambda}r^\lambda U(\lambda)=0.\]
Then comparing~$l(u-u_k,v)$ and~$l(u,v)$ for~$v\in V$ shows that~$a(u_k,v)$ is a polynomial. To conclude, injectivity modulo polynomials shows that~$u_k$ is a polynomial. The uniformity of the constants in~$\Lambda_*$ is immediate.
This ends the proof of Proposition~\ref{prop:modelreg} in the case where~$\sigma\notin\N$.

If~$\sigma\in\N$, we just proved that the regularity holds for~$\sigma'<\sigma$. Keeping separate the regular part~$u_0\in H^{\sigma'+1}$ and the residues (which are polynomials), we see that we only need to prove that~$\chi u_0\in H^{\sigma+1}$ with~$\chi$ a cut-off at~$0$. Thus we only need to show that 
\[\forall\alpha, \la\alpha\ra=\sigma+1, D^\alpha u_0\in L^2(\Gamma_*).\]
For this we want to extend 
\[w(\lambda)=(\c D^\alpha(\lambda)U(\lambda))_{\la\alpha\ra=\sigma+1}\]
up to~$\Re\lambda=\sigma$, with the punctual convergence
\[w(\sigma'+i\cdot)\rightarrow w(\sigma+i\cdot)\text{ in }L^2(\R\times[0,\omega_*])\cap L^{2,\sigma}(\R;L^2([0,\omega_*])),\]
where~$\c D^{\alpha}(\lambda)$ is the Mellin transform of~$D^\alpha$ and
\[L^{2,\sigma}(\R;H)):=\lB u; (1+\la\cdot\ra)^\sigma\lA u(\cdot)\rA_H\in L^2(\R)\rB.\]
Then passing to the limit~$\Re\sigma'\rightarrow\Re\sigma$ in the Cauchy formula would give us the result. But the usual limit case for the Mellin transform (again see~\cite{DaugeEllCor}) gives the desired result. 
This concludes the proof of Proposition~\ref{prop:modelreg}.

\paragraph{Full operator.}
The objective is now to prove the regularity of the full problem starting from this model case.
Recall that from the full form~$a$ we have first constructed a constant coefficients form~$p$ on~$\R^{n-2}\times\Gamma_*$ be freezing the coefficients at~$0$. We can associate to this form~$p$ a series of problems~$P$ as we have done for~$a$ and~$l$. The first objective is to derive the following regularity theory for~$P$.
\begin{prop}	\label{prop:frozenreg}
	For any~$\c S$ in~$\Lambda_*$, for any~$0\leq \sigma<\frac{\pi}{2\omega}$, if there exists~$u\in H^1(\R^{n-2}\times\Gamma_*)$ and
	\begin{enumerate}
		\item $(f,g)\in H^{\sigma+\frac12}(\R^{n-2}\times S_*)\times(V^{1-\sigma})'$ for~$0\leq \sigma<\frac12$,
		\item $(f,g,h)\in H^{\sigma+\frac12}(\R^{n-2}\times S_*)\times(H^{1-\sigma}(\R^{n-2}\times\Gamma_*))'\times H^{\sigma-\frac12}(\R^{n-2}\times B_*)$ for~$\frac12<\sigma<1$,
		\item	$(f,g,h)\in H^{\sigma+\frac12}(\R^{n-2}\times S_*)\times H^{\sigma-1}(\R^{n-2}\times\Gamma_*)\times H^{\sigma-\frac12}(\R^{n-2}\times B_*)$ for~$1\leq \sigma$,
	\end{enumerate}
	such that
	\[P^{(0)}u=I_{0,\sigma}(f,g)\quad(\text{resp. }P^{(0)}u=I_{0,\sigma}(f,g,h)),\]
	then~$u\in H^{\sigma+1}(\R^{n-2}\times\Gamma_*)$ near~$0$, with constant of the good form.
\end{prop}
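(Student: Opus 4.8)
The plan is to strip off the $n-2$ edge directions by a partial Fourier transform, reducing $P$ to a one-parameter family of problems on the transverse sector $\Gamma_*$, to which an extension of Proposition~\ref{prop:modelreg} applies, and then to reassemble by Plancherel. Since the statement is local at~$0$, I first multiply $u$ by a cutoff $\chi=\chi_1(y)\chi_2(z)$ equal to~$1$ near~$0$ and supported in a small ball: then $\chi u$ solves a problem of the same form whose data differ from $\chi(f,g,h)$ only by terms supported away from the corner, where $P$ is classically elliptic and $\chi u$ is already $H^{\sigma+1}$, so these extra terms are harmless and it suffices to prove $\chi u\in H^{\sigma+1}$. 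Writing points of $\R^{n-2}\times\Gamma_*$ as $(y,z)$ and applying $\c F_y$, the equation becomes a family, indexed by $\xi\in\R^{n-2}$, of problems on~$\Gamma_*$,
\[\c P(\xi)\,\widehat{\chi u}(\xi,\cdot)=\bigl(F(\xi,\cdot),\,G(\xi,\cdot),\,H(\xi,\cdot)\bigr),\]
where $\c P(\xi)$ is the triple of interior operator, Dirichlet trace on $S_*$ and Neumann trace on $B_*$ attached, exactly as $L$ was attached to $l$, to the constant-coefficient form obtained from $p$ by replacing each tangential derivative $\partial_{y_i}$ by $i\xi_i$. Thus $\c P(\xi)$ differs from $L$ only by constant-coefficient terms of strictly lower order: an interior first-order term $c^j(\xi)\partial_{z_j}$ with $c^j$ linear in $\xi$, an interior zeroth-order term $\la\xi\ra_*^2:=\sum_{i,k}\alpha_{ik}(x)\xi_i\xi_k\gtrsim\la\xi\ra^2$, and a zeroth-order term in the Neumann condition, again linear in $\xi$; moreover $\widehat{\chi u}(\xi,\cdot)$ is compactly supported in~$z$. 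As in the excerpt I treat the representative range $\mez<\sigma\le s-1$, the other cases being analogous.

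The heart of the matter is the bound, uniform in $\xi$ and in $\Lambda_*$, with $\mu:=\langle\xi\rangle$,
\[\lA\widehat{\chi u}(\xi,\cdot)\rA_{H^{\sigma+1}(\Gamma_*),\,\mu}\les\lA F(\xi,\cdot)\rA_{\sigma+\mez,\,\mu}+\lA G(\xi,\cdot)\rA_{\sigma-1,\,\mu}+\lA H(\xi,\cdot)\rA_{\sigma-\mez,\,\mu},\]
in the parameter-dependent norms already used in the Mellin analysis, with $\mu$ in place of $\langle\lambda\rangle$. Granting this, one integrates in $\xi$ and uses the equivalence of the isotropic norm $\lA v\rA_{H^\beta(\R^{n-2}\times\Gamma_*)}$ with $\bigl(\int_{\R^{n-2}}\lA\widehat v(\xi,\cdot)\rA_{\beta,\,\langle\xi\rangle}^2\,\d\xi\bigr)^{1/2}$, and likewise for the traces on $\R^{n-2}\times S_*$ and $\R^{n-2}\times B_*$; this reconstitutes $\lA\chi u\rA_{H^{\sigma+1}}$ on the left and $\lA\chi f\rA_{H^{\sigma+\mez}}+\lA\chi g\rA_{H^{\sigma-1}}+\lA\chi h\rA_{H^{\sigma-\mez}}$ on the right, which is the conclusion.

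To prove the uniform bound, I would first record the routine extension of Proposition~\ref{prop:modelreg} to model operators obtained from $L$ by adding constant-coefficient lower-order terms, in the interior and in the Neumann condition: such terms are compact perturbations at each level of regularity, hence absorbed by the same bootstrap, and, as Lemma~\ref{lem:singvap} makes clear, they leave the operator pencil $\partial_\theta^2+\lambda^2$ unchanged, hence also the singular exponents $(k+\mez)\pi/\omega$ and the admissible range $\sigma<\pi/(2\omega)$; the constants then depend only on an upper bound for the added coefficients. I then split according to $\la\xi\ra$. For $\la\xi\ra\le1$, in particular $\xi=0$, $\c P(\xi)$ is of the extended model type with coefficients ranging over a fixed compact set as $\xi$ runs over the unit ball and $\c S$ over $\Lambda_*$, so the extended Proposition gives the bound with $\mu\simeq1$, uniformly. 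For $\la\xi\ra\ge1$, I rescale $z=z'/\la\xi\ra$: then $\c P(\xi)$ becomes $\la\xi\ra^2$ times the operator with interior part $-\Delta_{*,z'}+c^j(\widehat\xi)\partial_{z'_j}+\la\widehat\xi\ra_*^2$ and $\widehat\xi=\xi/\la\xi\ra$, whose zeroth-order part is bounded below and which is again of the extended model type, with coefficients in a compact set parametrised by the unit sphere and $\Lambda_*$; the extended Proposition then yields a plain $H^{\sigma+1}(\Gamma_*)$ bound for the rescaled functions, uniform over that compact set, and the dilation intertwines the plain $H^\beta(\Gamma_*)$ norms with the $\la\xi\ra$-weighted ones, up to a power of $\la\xi\ra$ common to both sides. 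Since $\langle\xi\rangle\simeq\la\xi\ra$ there, this is the weighted bound. Patching the two regimes finishes the estimate, and with it the Proposition.

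The step I expect to be the real obstacle is precisely this parameter-uniform, correctly weighted upgrade of the cone estimate: one must check that the constants in the extension of Proposition~\ref{prop:modelreg} are genuinely uniform over the compact family of frozen coefficients produced by $\Lambda_*$ together with the direction $\widehat\xi$, and that the dilation at large $\la\xi\ra$ really does intertwine the model estimate on $\Gamma_*$ with the $\langle\xi\rangle$-weighted norms, so that Plancherel in $\xi$ produces the isotropic $H^{\sigma+1}$ norm on $\R^{n-2}\times\Gamma_*$ and not a weaker anisotropic one. Everything else---the localisation near~$0$, the use of classical ellipticity away from the corner to dispose of the commutator terms from the cutoff, and the bookkeeping of the data norms under $\c F_y$---is standard.
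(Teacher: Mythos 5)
Your strategy is the paper's own: partial Fourier transform in the edge variables, reduction to a $\xi$-parametrised family of sector problems that are constant-coefficient, lower-order (hence compact) perturbations of $L$, reduction by homogeneity to $\xi$ on the unit sphere, uniform weighted estimates, and Plancherel. However, the point you yourself flag as ``the real obstacle'' is a genuine gap as written, and it is exactly where the paper spends its effort. Proposition~\ref{prop:modelreg} (and your extended version with added lower-order terms) gives constants that \emph{depend on the support of $u$}. After the dilation $z=z'/\la\xi\ra$ in the regime $\la\xi\ra\geq1$, the rescaled function $\widehat{\chi u}(\xi,\cdot/\la\xi\ra)$ has support of diameter comparable to $\la\xi\ra$, so a direct appeal to the model proposition produces a constant that degenerates as $\la\xi\ra\to\infty$, and Plancherel then fails to reconstitute the isotropic $H^{\sigma+1}$ norm. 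What is missing is a version of the sector estimate for the unit-sphere problems $P(\xi/\la\xi\ra)$ valid for $u$ \emph{without} compact support, with constants independent of the support.

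The paper supplies precisely this ingredient before rescaling: it proves the regularity of the parameter problem separately near the corner (from $L$ together with compactness of the perturbation, as you do) and far from the corner. For the latter it returns to the full constant-coefficient problem $P$ near the compact set of points $(0,z)$ with $\la z\ra=1$ by testing against $e^{i2^\gamma\ls\eta,y\rs}u(2^\gamma z)$, and uses the scale invariance of the principal part to obtain classical interior and boundary estimates on dyadic annuli exhausting the complement of the unit ball, uniformly in $\gamma$; summing over the annuli gives the sector estimate with a constant independent of the support. With that statement in hand, the large-$\la\xi\ra$ rescaling is legitimate and the dilation does intertwine the plain and $\la\xi\ra$-weighted norms as you assert. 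So your outline is architecturally correct, but to become a proof it needs this support-uniform far-field estimate inserted before the rescaling step; the compactness-of-coefficients and unchanged-singular-exponents observations you make do not by themselves remove the support dependence.
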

We have again used the obvious definition for~$V$ and the~$I_{0,\sigma}$.
This statement is different from the one for~$L$ only if~$n\geq3$, so that their is actually a transverse direction~$y\in\R^{n-2}$.
\begin{proof}
The spirit of the proof is to first go to Fourier in the unbounded variable~$y\in\R^{n-2}$, and then use homogeneity to reduce to the dual variable~$\eta$ in the sphere~$S^{n-3}$. Then for such an~$\eta$, the regularity far from~$0$ will be a simple consequence of the classical regularity theory, while the regularity near~$0$ will come from the one of~$L$. 

Accordingly, for~$\eta\in S^{n-3}$, let~$p(\eta)$ be the form deduced from~$p$ by going to Fourier in~$y$, or equivalently replacing the~$\partial^\alpha_y$ derivatives with multiplication by~$i\eta^\alpha$. Let~$P(\eta)$ be the associated family of problems. Assume we have~$u\in V(\Gamma_*)$ and~$(f,g,h)\in H^{\sigma+\frac12}(S_*)\times(H^{1-\sigma}(\Gamma_*))'\times H^{\sigma-\frac12}(B_*)$ such that 
\[P^{(0)}(\eta)u=I_{0,\sigma}(f,g,h)).\]
We want to prove that~$u\in H^{\sigma+1}(\Gamma_*)$, with the associated constant uniform in~$\Lambda_*$. Here~$u$ need not have compact support, which is important for what follows.
We can decompose between the regularity at points far from~$0$, and at~$0$.
The regularity at~$0$ is an immediate consequence of the regularity of~$L$ and the easily established compactness of~$P(\eta)-L$ on compactly supported functions, which comes from the fact that the associated form~$a-p(\eta)$ involves no first-order derivatives.

For the regularity far from~$0$, we start by remarking that the regularity of~$P$ near any point~$(0,z)\in\R^{n-2}\times\Gamma_*$ with~$\la z\ra=1$ comes immediately from the constancy of its coefficients and the classical interior and boundary estimates. The constant can be taken uniformly for those points because they form a compact set. Then applying the associated regularity theory and estimates, to a cut-off near this set times~$e^{i2^\gamma\langle\eta,y\rangle}u(2^\gamma z)$ for~$\gamma\in\N^*$, gives the regularity of~$u$ in dyadic crowns exhausting~$B(0,1)^c$, uniformly in~$2^\gamma$. Then summing the pieces gives the regularity far from~$0$. The constant are of course smoothly dependent on the coefficients of~$p$ and on~$\eta\in S^{n-2}$, which inhabits compact set. Therefore it can be taken uniform in~$\Lambda_*$ and in~$\eta\in S^{n-2}$. 

We derive that for any~$\eta\in\R^{n-2}$, if~$u$ has compact support in~$B(0,1)$, then~$P(\eta)^{(0)}u=I_{0,\sigma}(f,g,h)$ means that~$u\in H^{\sigma+1}$ with estimations uniform in the norms~$H^{r}(\Gamma_*,\la\eta\ra)$. This is immediate for~$\eta\leq 1$ because those norms are equivalent to the classical Sobolev norm, and because~$P(\eta)-P(\eta/\la\eta\ra)$ is compact. 

For~$\eta>1$, we simply apply the regularity of~$P(\eta/\la\eta\ra)$ to~$u(z/\la\eta\ra)$. The fact that the support of this function goes to infinity is the reason we needed estimates far from~$0$ in the preceding step.

Using Fourier, those estimates immediately imply Proposition~\ref{prop:frozenreg}.

\end{proof}

Then the remainder of the proof of Theorem~\ref{thm:ellreg} is simple. We first deduce from the regularity of~$P$ near~$0$ the regularity of~$A$ near~$x\in\c L$, by mapping a neighborhood of~$x$ to a neighborhood of~$0$ in~$\R^{n-2}\times\Gamma_*$ so that~$p$ is its associated frozen coefficients form, then using the  $H^{s-\frac12}$ (with~$s>1+\frac n2$) regularity of the coefficients to write that the form~$a$ is close to the form~$p$ for functions with support small enough, close to~$x$. The size of this support depends only on the Lipschitz norm of those coefficients, so that it is bounded from below for~$\c S\in\Lambda_*$. Those steps are where the regularity of the solution gets limited to~$s+\frac12$.

Then combining this with the classic regularity near other points, and using the compactness of~$\Omega_*$, we can finish the proof. For the last step of regularity, assuming~$\c S$ is~$H^s$, one needs simply to use the~$H^s(\Omega)$ regularity of the solution, with constants uniform in~$\Lambda_*$, and then prove the regularity up to~$H^{s+\frac12}(\Omega)$ by repeating the proof above, noticing that since the analysis is performed on the constant-coefficients problems, the only time we need use their full regularity is in the last step, when deducing the regularity of the full problem from the one with frozen coefficients. It is readily seen that this gives a constant linear in~$\la\c S\ra_s$.

It can be remarked in the proof above that the regularity is limited by the angle only at~$\c L$. Therefore, if~$s\geq\frac{\pi}{n+1}$, the regularity of the solution in the full domain will only be~$H^{1+\frac{pi}{2\omega_{\mathrm{max}}}-}$, but we still have the following.
\begin{lem}
 Take~$s>1+\frac n2$, an~$H^s$ surface~$\c S_*$ as above, and~$\delta>0$ small enough. Assume~$\c S\in\Lambda_*$ is also in~$H^s$. Take~$u$ a variational solution as above, with data~$(f,g,h)\in H^{r+\frac12}\times H^{r}\times H^{r-\frac12}$, with~$0\leq r\leq s-\frac12$. Then~$u$ is locally in~$H^{r+1}$ near any point of~$\bar{\Omega}\setminus\c L$.
\end{lem}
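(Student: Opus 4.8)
This is the \emph{classical} part of the elliptic theory --- the interior and regular-boundary estimates already invoked in the proof of Theorem~\ref{thm:ellreg} --- pushed to the natural ceiling allowed by the data. Since~$\c L$ is compact, hence closed in~$\bar\Omega$, every point of~$\bar\Omega\setminus\c L$ has a neighborhood disjoint from~$\c L$; it therefore suffices to prove local~$H^{r+1}$ regularity of~$u$ near an interior point of~$\Omega$, near a point of~$\c S\setminus\c L$, and near a point of~$\c B\setminus\c L$, the last two carrying respectively the Dirichlet condition~$u=f$ and the Neumann condition~$\nab\nu u=h$.

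As in the rest of the section, I would pull the problem back by the diffeomorphism~$\Phi_\Omega$ of Proposition~\ref{prop:globcoord}. Away from~$\c L_*$ the boundary of~$\Omega_*$ is a smooth hypersurface --- a piece of the smooth reference surface~$\c S_*$, resp. of the smooth bottom~$\c M$ --- the pulled-back Laplacian~$\Delta_*$ is uniformly elliptic with coefficients in~$H^{s-\frac12}$, the variational solution belongs to~$H^1(\Omega_*)$ by Proposition~\ref{prop:varsol}, and the data stay in~$H^{r+\frac12}(\c S_*)\times H^r(\Omega_*)\times H^{r-\frac12}(\c B_*)$ by the product estimates on~$\Omega$.

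Then I would run the usual bootstrap, raising the local regularity of~$u$ by half a derivative at a time starting from~$H^1$. In a small chart one freezes the top-order coefficients of~$\Delta_*$ at the centre, moves the coefficient difference together with the first-order part to the right-hand side, and controls the resulting terms using that~$s-\frac12>\frac{n+1}2>\frac n2$, so that~$H^{s-\frac12}$ is a Sobolev multiplier and the product estimates on~$\Omega$ apply; inverting the constant-coefficient model operator (the Laplacian on~$\R^n$, resp. on~$\R^n_+$ with inhomogeneous Dirichlet or Neumann data) then closes the step. These variable-coefficient estimates hold up to the regularity permitted by the~$H^{s-\frac12}$ coefficients, which lies strictly above~$r+1$; at the boundary the bootstrap is in addition limited to~$H^{r+1}$ because~$f\in H^{r+\frac12}$ and~$h\in H^{r-\frac12}$ are precisely the Dirichlet, resp. Neumann, traces of an~$H^{r+1}$ function, while~$g\in H^r$ is the corresponding interior datum. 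One thus obtains~$u\in H^{r+1}$ locally in every case.

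The only real technical point is the standard one in variable-coefficient elliptic regularity: the leading-order error produced by freezing the coefficients formally costs two derivatives of~$u$, and must be absorbed --- by difference quotients in the manner of Nirenberg, or by exploiting the smallness of that error on a sufficiently small ball and moving it back to the left-hand side --- while the lower-order contributions are checked never to exceed the available regularity. In contrast with Theorem~\ref{thm:ellreg}, no uniformity in~$\Lambda_*$ is claimed here, so the charts may be chosen as small as needed and, in particular, may shrink as one approaches~$\c L$; this keeps the whole argument strictly local and independent of the edge analysis.
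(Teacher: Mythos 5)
Your proposal is correct and matches the paper's (implicit) argument: the paper gives no separate proof of this lemma, relying on the remark that in the proof of Theorem~\ref{thm:ellreg} the angle only limits regularity at~$\c L$, while near interior points and regular boundary points the classical elliptic theory applies after pulling back by~$\Phi_\Omega$ and bootstrapping by half a derivative at a time. Your write-up simply makes that explicit, and your observation that no uniformity in~$\Lambda_*$ is needed so the charts may shrink near~$\c L$ is exactly the right reason the edge analysis can be bypassed.
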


\paragraph{Other elliptic problems.}
Another elliptic problem in~$\Omega$ we need to solve is
\begin{equation*} 	
	\lB
	\begin{aligned}
		&\Delta u=g\text{ in }\Omega,\\
		&\nab Nu\rvert_{\c S}=f,\\
		&\nab\nu u\rvert_{\c B}=h,
	\end{aligned}
	\right.
\end{equation*}
subject to the natural condition
\begin{equation*}
	\int_\Omega g\d x=\int_{\c S}f\d S+\int_{\c B}h\d S.
\end{equation*}
The resolution is completely analogous to the previous one, although it has to be performed modulo constants since they are always solution of the homogeneous problem. The only real change is the model operator, which will of course be the Laplace operator on a sector of angle~$\omega$, but with Neumann condition at both sides. It is readily checked that the singularities appear for~$\lambda=\frac{k\pi}{\omega}$, $k\in\N^*$, and therefore the solution is (modulo constants) in~$H^{r+1}$ with~$0<r\leq \min\lp s-\frac12,\frac{\pi}{\omega_{\mathrm{max}}}\rp$. 

In the same vein, one obtain the same regularity result for the Dirichlet-Dirichlet problem, assuming the two pieces of data can be pasted together at the angle to form a smooth enough function. The singularities are at the same place as in the Neumann-Neumann case.

\subsection{The Dirichlet to Neumann operator}	\label{subsec:DN}
The fundamental operator for the analysis is the Dirichlet to Neumann operator~$\c N$, defined on functions on~$\c S$ by
\begin{equation*}
 \c Nf=\nab N\c H(f),
\end{equation*}
where~$\c H(f)$ is the harmonic extension of~$f$ in~$\Omega$, satisfying
\begin{equation*} 	
	\lB
	\begin{aligned}
		&\Delta \c H(f)=0\text{ in }\Omega,\\
		&\c H(f)\rvert_{\c S}=f,\\
		&\nab\nu\c H(f)\rvert_{\c B}=0.
	\end{aligned}
	\right.
\end{equation*}

As a consequence of the preceding analysis, this operator is continuous, elliptic, and self-adjoint on~$L^2(\c S)$, for~$\c S$ in~$\Lambda_*$. 
\begin{prop}
 Let~$s>1+\frac n2$.
 Let~$\c S_*$ be an~$H^{s-\frac12}$ reference hypersurface, and~$\delta$ small enough, so that in the corresponding~$\Lambda_*$, the maximal angle satisfies~$s<\frac\pi{2\overline\omega}$, and all other geometric conditions imposed above apply. Let~$1\leq\sigma\leq s-\frac12$.
 \begin{enumerate}
  \item Continuity: there is a constant~$C$, depending only on~$\Lambda_*$ and~$\sigma$, such that if~$\c S$ is an hypersurface in~$\Lambda_*$, and~$f\in H^\sigma(\c S)$,then~$\c Nf\in H^{\sigma-1}(\c S)$, and 
  \[\lA\c Nf\rA_{H^{\sigma-1}(\c S)}\leq C\lA f\rA_{H^\sigma(\c S)}.\]
  \item Ellipticity: there is a constant~$C$, depending only on~$\Lambda_*$ and~$\sigma$, such that if~$\c S$ is an hypersurface in~$\Lambda_*$, and~$f\in H^1(\c S)$ is such that~$\c Nf\in H^{\sigma-1}(\c S)$, then~$f\in H^{\sigma}(\c S)$, and 
  \[\lA f\rA_{H^{\sigma}(\c S)}\leq C\lb\lA\c N f\rA_{H^{\sigma-1}(\c S)}+\lA f\rA_{L^2(\c S)}\rb.\]
  \item Self-adjointness on~$L^2(\c S)$: if~$f$ and~$g$ are in~$H^1(\c S)$, then
  \[\int_{\c S}f\c Ng\d S=\int_{\c S}g\c Nf\d S.\]
 \end{enumerate}
 \end{prop}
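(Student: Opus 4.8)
The plan is to deduce all three properties from the elliptic regularity already established — Theorem~\ref{thm:ellreg} for the mixed Dirichlet--Neumann problem and its Neumann--Neumann analogue — together with the trace theorems and Green's identity on the Lipschitz domain~$\Omega$. For \emph{continuity}, given~$f\in H^\sigma(\c S)$ with~$1\le\sigma\le s-\frac12$, I would read the regularity of~$\c H(f)$ off Theorem~\ref{thm:ellreg}, applied to the problem defining~$\c H(f)$ — Dirichlet datum~$f$ on~$\c S$, vanishing interior and bottom data — i.e. at the index~$\tau=\sigma-\frac12$. This~$\tau$ satisfies~$\frac12\le\tau\le s-1$, and since~$s-1<s<\frac\pi{2\overline\omega}$ it stays in the admissible range, so~$\c H(f)\in H^{\sigma+\frac12}(\Omega)$ with~$\lA\c H(f)\rA_{H^{\sigma+\frac12}(\Omega)}\le C\lA f\rA_{H^\sigma(\c S)}$, $C$ controlled on~$\Lambda_*$; then~$\nabla\c H(f)\in H^{\sigma-\frac12}(\Omega)$, and its trace on~$\c S$, which costs half a derivative, gives~$\c Nf\in H^{\sigma-1}(\c S)$ with the claimed bound.

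For \emph{ellipticity}, suppose~$f\in H^1(\c S)$ with~$\c Nf\in H^{\sigma-1}(\c S)$. By the continuity step at~$\sigma=1$ we have~$u:=\c H(f)\in H^{\frac32}(\Omega)\subset H^1(\Omega)$, and~$u$ is a variational solution of the mixed problem~$\Delta u=0$ in~$\Omega$, $\nab Nu\rvert_{\c S}=\c Nf$, $\nab\nu u\rvert_{\c B}=0$, whose compatibility condition is automatic since~$\int_{\c S}\c Nf\,\d S=\int_\Omega\Delta\c H(f)\,\d x-\int_{\c B}\nab\nu\c H(f)\,\d S=0$. I would then invoke the regularity theory for this Neumann--Neumann problem — whose singular exponents are the~$k\pi/\omega$, $k\in\N^*$ — at the index~$r=\sigma-\frac12$, which satisfies~$0<r\le s-1<\frac\pi{2\overline\omega}<\frac\pi{\overline\omega}$; this gives~$u\in H^{\sigma+\frac12}(\Omega)$ modulo constants, hence~$f=u\rvert_{\c S}\in H^\sigma(\c S)$ with an estimate~$\lA u-c\rA_{H^{\sigma+\frac12}(\Omega)}\le C\lA\c Nf\rA_{H^{\sigma-1}(\c S)}$ for a suitable constant~$c$. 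Handling~$c$ by a Poincaré-type bound and by the boundedness of the harmonic extension from~$L^2(\c S)$ into~$L^2(\Omega)$ — the latter obtained by duality against the~$\sigma=1$ case of Theorem~\ref{thm:ellreg} — then yields~$\lA f\rA_{H^\sigma(\c S)}\le C\lp\lA\c Nf\rA_{H^{\sigma-1}(\c S)}+\lA f\rA_{L^2(\c S)}\rp$.

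For \emph{self-adjointness}, let~$f,g\in H^1(\c S)$; the continuity step at~$\sigma=1$ gives~$\c Nf,\c Ng\in L^2(\c S)$, so both integrals are well defined. Green's identity on the Lipschitz domain~$\Omega$, legitimate since~$\c H(f),\c H(g)\in H^1(\Omega)$ with~$\Delta\c H(g)=0\in L^2(\Omega)$, reads
\[\int_\Omega\nabla\c H(f)\cdot\nabla\c H(g)\,\d x=\int_{\c S}f\,\c Ng\,\d S+\int_{\c B}\c H(f)\,\nab\nu\c H(g)\,\d S,\]
and the boundary term on~$\c B$ vanishes since~$\nab\nu\c H(g)\rvert_{\c B}=0$; the left-hand side is symmetric in~$f$ and~$g$, which is exactly~$\int_{\c S}f\,\c Ng\,\d S=\int_{\c S}g\,\c Nf\,\d S$.

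The real analytic work is already contained in Theorem~\ref{thm:ellreg} and its Neumann--Neumann variant, so what remains is essentially bookkeeping: one must check that the two indices~$\sigma-\frac12$ (Dirichlet problem for continuity, Neumann problem for ellipticity) stay below the respective angle thresholds~$\frac\pi{2\overline\omega}$ and~$\frac\pi{\overline\omega}$, which the hypotheses~$s<\frac\pi{2\overline\omega}$ and~$\sigma\le s-\frac12$ guarantee with room to spare, and that the conormal-derivative traces entering Green's identity are meaningful, which holds because~$\Omega$ is Lipschitz and the relevant extensions lie in~$H^{\frac32}(\Omega)$. The one point that demands a little care is the additive constant in the ellipticity estimate: the~$L^2$-boundedness of the harmonic extension is a low-regularity endpoint not directly covered by Theorem~\ref{thm:ellreg}, and must be obtained by the duality argument indicated above.
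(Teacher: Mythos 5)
Your proposal is correct and follows exactly the route of the paper's (very brief) proof: continuity from Theorem~\ref{thm:ellreg} applied to the Dirichlet--Neumann problem at index~$\sigma-\frac12$, ellipticity from the Neumann--Neumann variant (singular exponents~$k\pi/\omega$, $k\in\N^*$, hence threshold~$\frac\pi{\overline\omega}$), and self-adjointness from Green's formula on the Lipschitz domain. The extra bookkeeping you supply — the index shifts, the automatic compatibility condition for the Neumann problem, and the treatment of the additive constant via the~$L^2$ term — is exactly what the paper leaves implicit.
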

\begin{proof}
 Continuity and ellipticity are simple consequences of the preceding analysis, for the Dirichlet-Neumann and Neumann-Neumann problem respectively,
 the angle condition being verified for both. To check that~$\c N$ is symmetric, we observe that Stokes formula is valid since the domain is Lipschitz, and then
 \[\int_{\c S}f\c Ng\d S=\int_\Omega\nabla\c Hf\cdot\nabla\c Hg\d x=\int_{\c S}g\c Nf\d S.\] 
 Self-adjointness immediately follows.
\end{proof}

Then by the spectral theorem, one can define non-integer powers of~$\c N$, with the obvious mapping and ellipticity properties.

\subsection{Div-curl Problem}

In order to recover the velocity in the following analysis, we will need an elliptic regularity statement for the following problem on a vector field~$v:\Omega\rightarrow\R^d$.

\begin{equation} 	\label{eq:divcurl}
	\lB
	\begin{aligned}
		&\nabla\cdot v=g\text{ in }\Omega,\\
		&\nabla\times v=\mu\text{ in }\Omega,\\
		&(\langle\nabla v,N\rangle^\sharp)^\top=f \text{ on }\c S\\
		&\langle v,\nu\rangle=h\text{ on }\c B.
	\end{aligned}
	\right.
\end{equation}
Here~$\nabla\times v$ is a shorthand for the vorticity form~$\mu(X)\cdot Y=\langle\nab Xv,Y\rangle-\langle\nab Yv,X\rangle$ which, as is well-known, can be seen as a function in dimension~$n=2$ and a vector field in dimension~$n=3$. In our problem, $g=0$ and we could assume~$\mu=0$, however this does not simplify the proof, and therefore we may as well study the general case.

In this paper, we will not need the existence of a variational solution for such a problem, so we concentrate on regularity theorems. We will have two main difficulties.

The first one is already present in the usual case without corners. The surface~$\c S$ is~$H^s$, and we will want the velocity field~$v\in H^s(\Omega)$. However, if we see~$v$ as a vector field on the $H^{s+\frac12}$ manifold with corner~$\Omega$, its maximal regularity would be~$H^{s-\frac12}(\Omega)$, since change of coordinates on vector fields involve multiplication by~$D\phi_\Omega$. The way out of this is to consider~$v$ as an array of function, whose regularity is only limited at~$H^{s+\frac12}(\Omega)$, and satisfying some additional relations. Therefore any technique invariant by change of coordinates on vector fields, like the Hodge decomposition, is useless in this setting. Remark however that there is no problem of definition for the data, which are all less regular functions.

The second difficulty is again due to the presence of the corner. Since we do not want to perform an analysis of the singularities as above, our aim is to reduce the problem to a scalar one, at least locally near the corner, and use the preceding results.

The reason why the boundary data on~$\c S$ is not under the more classical form~$\ls v,N\rs$ is that~$N$ would limit the regularity of this expression, being only in~$H^{s-\frac32}(\c S)$ for~$\c S\in\Lambda_*$. Since~$\nu$ has maximal regularity on~$\c B$, this does not happen for the other boundary condition.

For the following Proposition, remark that since we only want~$v\in H^s(\c S)$ at the maximum, we do not need to know that~$\c S$ is~$H^s$. The information that it is in~$H^{s-\frac12}$, which is included in the hypothesis~$\c S\in\Lambda_*$, is sufficient. 
\begin{prop}
 Take~$s>1+n/2$, an $H^s$ hypersurface~$\c S_*$ and~$\delta>0$ small enough, so that in particular as above, $s<\frac{\pi}{2\omega_{\mathrm{max}}}+\frac12$. Then if~$\c S\in\Lambda_*$, if~$(g,\mu,f,h)\in H^{\sigma-1}(\Omega)\times H^{\sigma-1}(\Omega)\times H^{\sigma-\frac 32}(\c S)\times H^{\sigma-\frac12}(\c B)$, with~$s-1\leq\sigma\leq s$, if~$v$ is a solution of~\eqref{eq:divcurl} in~$H^\sigma(\Omega)$, then
 \[\lA v\rA_{H^\sigma(\Omega)}\leq C\lb\lA g\rA_{H^{\sigma-1}(\Omega)}+\lA\omega\rA_{H^{\sigma-1}(\Omega)}+\lA f\rA_{H^{\sigma-\frac 32}(\c S)}+\lA h\rA_{H^{\sigma-\frac 12}(\c B)}+\lA v\rA_{L^2(\Omega)}\rb,\]
 where~$C$ is uniform in~$\Lambda_*$.
\end{prop}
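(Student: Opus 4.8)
The plan is to reduce \eqref{eq:divcurl} to a collection of scalar elliptic problems of the kinds already treated --- the Dirichlet--Neumann problem of Theorem~\ref{thm:ellreg} together with its Neumann--Neumann and Dirichlet--Dirichlet variants --- and to localise the argument. The algebraic fact that makes this possible is the componentwise identity $\Delta v^k=\partial_kg+\sum_j\partial_j\mu_{jk}=:G^k$, read off from $\div v=g$ and $\curl v=\mu$, which shows that each Euclidean component $v^k$ is a genuine scalar function with $\Delta v^k\in H^{\sigma-2}(\Omega)$. Because $v^k$ is a scalar, not a vector field, it pulls back to $\Omega_*$ without the loss of half a derivative that makes the Hodge approach useless here, and the scalar theory applies at the level $\sigma_{\mathrm{scal}}=\sigma-1$: the constraints $0<\sigma-1$, $\sigma-1\le s-\tfrac12$ and $\sigma-1<\tfrac{\pi}{2\overline\omega}$ are all consequences of $s>1+\tfrac n2$, $\sigma\le s$ and $s<\tfrac{\pi}{2\overline\omega}+\tfrac12$, so the constants produced are of the form asserted, controlled as in Theorem~\ref{thm:ellreg}. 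What is missing is \emph{scalar} boundary data for the $v^k$, and the whole content of the proof is that the two vector conditions of \eqref{eq:divcurl}, together with the interior relations $\div v=g$ and $\curl v=\mu$, supply it.

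Away from the water line this is the classical argument for div--curl systems. Near an interior point one uses only interior elliptic regularity for $\Delta$. Near a regular point of $\c B$ one rotates the target $\R^n$ by a smooth orthonormal frame whose last vector is $\nu$ (smooth since $\c M$ is): in these rotated components $\langle v,\nu\rangle=h$ is a Dirichlet condition for the normal component, and $\div v=g$, $\curl v=\mu$ express the normal derivatives of the tangential components through their tangential derivatives and the data, i.e. Neumann conditions, the coupling being of lower order; a first-order perturbation off the half-space model with constant coefficients then closes the estimate. Near a regular point of $\c S$ one proceeds similarly with a frame whose last vector is $N$ --- only $H^{s-\frac32}$, which is exactly the regularity that $f\in H^{\sigma-\frac32}$ and the targeted conclusion $v\in H^\sigma$, $\sigma\le s$, can afford --- the condition $(\langle\nabla v,N\rangle^\sharp)^\top=f$ yielding first-order conditions for the components tangent to $\c S$ and, again through $\div v=g$, for the remaining one.

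The substantial step is a neighbourhood of a point $x\in\c L$, where I would run exactly the scheme of Propositions~\ref{prop:modelreg} and~\ref{prop:frozenreg} and of the ``full operator'' paragraph: map the neighbourhood diffeomorphically onto one of $0$ in $\R^{n-2}\times\Gamma_*$, freeze both the coefficients of the form $a$ and the two frame vectors at $x$, and reduce to a constant-coefficient div--curl system on the sector $\Gamma_*$. The key point is that, up to perturbations involving no first-order derivative and hence compact on compactly supported functions, this frozen system is equivalent to a decoupled family of scalar Laplace problems on $\Gamma_*$, each carrying a Dirichlet or a Neumann condition on each of the two edges --- precisely the Dirichlet--Neumann, Neumann--Neumann and Dirichlet--Dirichlet model problems already analysed, whose singular exponents $(k+\tfrac12)\tfrac\pi\omega$ and $\tfrac{k\pi}\omega$ are all $\ge\tfrac{\pi}{2\omega}$ and therefore excluded by the angle hypothesis exactly as in the scalar case. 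One then transfers back through the $\R^{n-2}$-parametrised frozen problem $P$ and the genuine form $a$ as in Proposition~\ref{prop:frozenreg}, the commutators with the true frames being of lower order and bounded in $\Lambda_*$; summing the local estimates over a finite cover of the compact set $\overline{\Omega_*}$ against a subordinate partition of unity and moving all cutoff and commutator terms to the right-hand side produces the term $\lA v\rA_{L^2(\Omega)}$ and gives the stated inequality.

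I expect the genuine difficulty to be exactly this decoupling near the corner. Unlike in the scalar case the boundary conditions of \eqref{eq:divcurl} on the two edges are adapted to the two \emph{different} directions $\nu$ and $N$, which at $x$ make the angle $\omega$, so after freezing one must identify the correct scalar substitute for each (rotated) component and verify that the residual coupling --- between components, and between the first-order condition on $\c S$, the zeroth-order condition on $\c B$ and the relations $\div v=g$, $\curl v=\mu$ --- is genuinely of lower order, hence compact relative to the model operator on compactly supported functions. That is what lets the meromorphic Mellin argument of Proposition~\ref{prop:modelreg} be carried over verbatim and keeps the regularity threshold governed by $\tfrac{\pi}{2\omega}$ rather than by some smaller exponent.
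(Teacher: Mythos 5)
Your overall architecture coincides with the paper's: the componentwise identity $\Delta v^k=\partial_kg+\sum_j\partial_j\mu_{jk}$, the treatment of components as scalar functions to avoid the half-derivative loss, the localisation, the freezing of coefficients with the perturbation absorbed for small supports, and the adapted frames at regular boundary points (Neumann conditions for tangential components via $\div$ and $\curl$, Dirichlet for the normal one via the controlled tangential gradient plus the $H^{s-\eps}$ norm). Up to the water line your argument is essentially the paper's.

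However, at the corner there is a genuine gap, and it sits exactly where you flag "the genuine difficulty": you assert that the frozen div--curl system on $\R^{n-2}\times\Gamma_*$ is, modulo compact perturbations, "equivalent to a decoupled family of scalar Laplace problems" with pure Dirichlet or Neumann conditions on each edge, but you never produce the decoupling, and the componentwise route you sketch ("identify the correct scalar substitute for each rotated component") cannot produce it. The obstruction is the one you yourself name: the two boundary conditions are adapted to the non-parallel directions $\nu$ and $N$, which meet at the angle $\omega$, so no rotation of the target diagonalises both conditions simultaneously; the resulting coupling between components enters the \emph{boundary operators} at their principal order, not as a lower-order (compact) perturbation, so the Mellin analysis of Proposition~\ref{prop:modelreg} does not carry over verbatim to the componentwise system. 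The paper's resolution is to abandon Cartesian components in the straight sector altogether: after Fourier in the $\R^{n-2}$ variables it performs a Hodge decomposition $\tilde v=\nabla\phi+\nabla^\perp\tilde\phi$ of the two remaining components (legitimate there, since the sector is a fixed Lipschitz domain and the function is compactly supported, so no regularity is lost by this vectorial operation). Then $\Delta\phi=\tilde g$ with Neumann data on \emph{both} edges --- recovered from $\nab\tau\nab N\phi=\ls\nab\tau\tilde v,N\rs=\tilde f$ together with the $H^{s-\eps}$ control of $v$ --- and $\Delta\tilde\phi=\tilde\mu$ with $\tilde\phi$ constant on the boundary, i.e.\ a Dirichlet--Dirichlet problem; the scalar Neumann--Neumann and Dirichlet--Dirichlet theories in the sector (singular exponents $k\pi/\omega$, hence admissible under the angle hypothesis) then close the estimate. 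Without this change of unknowns, or an equivalent device, your corner step does not go through.
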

The dependence on~$\lA v\rA_{L^2}$ is good, since as a part of the Hamiltonian, it is a bounded quantity. Also the condition on the angle correspond to what would be expected for~$v=\nabla\psi$ and a smooth geometry, if we wanted~$\psi$ to be in~$H^{s+1}$. 

\begin{proof}
First, remark that by interpolation it is enough to prove the inequality with~$\lA v\rA_{H^{s-\epsilon}(\Omega)}$ for some~$\epsilon>0$ in place of the~$L^2$ norm.
 The proof is based on the observation that~\eqref{eq:divcurl} implies that the euclidean coordinates of~$v$ satisfy
 \begin{equation}	\label{eq:lapv}
  \Delta v^i=\partial_j\mu_j^i+\partial_if\text{ in }\Omega,
 \end{equation}
 so that one can study~$v^i$ as a function satisfying an elliptic equation.
 
 The regularity will again be proved locally near any point of~$\overline{\Omega}$. Near interior points, equation~\eqref{eq:lapv} is enough.
 
 Near a boundary point~$x_0$ not in the edge~$\c L$, the analysis is more involved. Since it is no different in both components, we concentrate on~$\c S$. First, we freeze coefficients. More precisely, we use a local coordinate map~$\psi$, of class~$H^s$, and such that~$D\psi$ is the identity at~$x_0$, and a cutoff to transfer the functions~$v^i$ close to~$x$ to compactly supported functions with value~$v^i(\psi)$ close to~$x_0$. Thus, by pulling back the~$v^i$ as functions, we avoid the loss of regularity. Then those functions, close to~$x_0$, satisfy a certain div-curl problem with non-constant coefficients, depending smoothly on~$D\psi^{-1}$, which, if frozen at~$x_0$, give the euclidean divergence and curl operators, and the straight boundary condition. 
 For example on the divergence part, if~$\chi$ denotes the inverse of~$\psi$ and~$v^i=w^i(\chi)$, we have
 \[\partial_i\chi^j\partial_jw^i=g(\psi),\]
 with~$D\chi(0)=I$, and therefore writing
 \[\partial_iw^i=\sum_i\partial_iw^i(1-\partial_i\chi^i)-\sum_{i\neq j}\partial_j\chi^i\partial_iw^j+g\]
 gives the control
 \[\lA\nabla\cdot w\rA_{H^{s-1}}\leq\lA g\rA_{H^{s-1}}+\lA Dw\rA_{H^{s-1}}\lA D\chi-Id\rA_{L^\infty}+C(\chi)\lA Dw\rA_{L^\infty}.\]
 Then since~$D\chi$ is Lipschitz we find that on a ball of radius~$\rho$,
 \[\lA\nabla\cdot w\rA_{H^{s-1}}\leq C\lb\lA g\rA_{H^{s-1}}+\rho\lA w\rA_{H^s}+\lA w\rA_{H^{s-\epsilon}}\rb\]
 for~$\epsilon>0$ small enough, with~$C$ uniform in~$\Lambda_*$, independent of~$\rho$. The same can be done for the curl and for the boundary data. Then one only needs to prove the regularity near~$x_0$ for the straight coefficients, and then take~$\rho$ small enough to absorb the term in the left-hand side. Notice that even if the right-hand side of the original problem was~$0$, we would still need to study the inhomogeneous version since the freezing process produces a right-hand side.
 
 Then one need only to prove the regularity for a solution of~\eqref{eq:divcurl} in a half-space for some function compactly supported near~$0$. We again transform the problem into~\eqref{eq:lapv}, and study each coordinates separately. If~$e_n$ is the normal coordinate in our half-space, the coordinates~$v^i$, for~$i\neq n$ satisfy a Laplace problem, and in terms of boundary data, we find
 \[\nab nv^i=\nab n\ls v,e_i\rs=\ls\nab nv,e_i\rs=\ls\nab{e_i}v,n\rs+\ls(\nabla\times v)(n),e_i\rs,\]
 and both of those terms are part of the data. Therefore we have a Neumann boundary condition for~$v_i$, and the regularity is classical.
 On the other hand, $v^n$ also satisfies a Laplace problem, and 
 \[\nab{e_i}v^n=\ls\nab{e_i}v,n\rs,\]
 which is part of the data for all~$i\neq n$, so that we control the full gradient of~$v^n$ on the boundary, and thus using~$\lA v\rA_{H^{s-\epsilon}}$ we can control the value of~$v^n$ on the boundary. We therefore have a Dirichlet problem for~$v^n$, and we can easily conclude.
 
 The last step is the control near a point of~$\c L$. As above, we reduce it to the same problem in the angular sector~$\R^{n-2}\times\Gamma$. The components of~$v$ in the unbounded direction, being tangential to all parts of the boundary, can be treated as above. We are left with two components, and we would like to reduce the problem to a 2D system in the angular sector~$\Gamma$. As in the proof of the scalar problem, we apply the Fourier transform in the unbounded variables. Treating all terms in the tangential variables and all lower order terms as a right-hand side, we have a problem with parameter~$\xi\in\R^{n-2}$, of the form
 \begin{equation}
	\lB
	\begin{aligned}
		&\nabla\cdot \tilde{v}(\xi)=\tilde{g}(\xi)\text{ in }\Gamma,\\
		&\nabla\times \tilde{v}=\tilde{\mu}(\xi)\text{ in }\Gamma,\\
		&(\langle\nabla\tilde{v}(\xi),N\rangle^\sharp)^\top=\tilde{f}(\xi) \text{ on } S\\
		&(\langle\nabla\tilde{v}(\xi),\nu\rangle^\sharp)^{\top_b}=\tilde{h}(\xi)\text{ on } B.
	\end{aligned}
	\right.
\end{equation}
 Here~$\tilde{v}(\xi)$ is for each~$\xi$ a vector field on~$\Gamma$. We need to prove weighted in~$\xi$ estimates for this problem. Again, as in the scalar case, it is sufficient to prove them for fixed~$\xi$ on the sphere, but with arbitrary large support, and constants independents of the support.
 
 Now since we are on a straight domain, there is no problem anymore to use vectorial methods. One can use Hodge decomposition in the angular sector, which exists since~$\Gamma$ is piecewise regular and the function has compact support, writing~$\tilde{v}=\nabla\phi+\nabla^\perp\tilde\phi$, where~$\nabla^\perp$ here is the perpendicular gradient~$(\partial_2,-\partial_1)$, and such that~$\nab N\phi=\tilde{v}\cdot N$ on~$S$ and the same on~$B$. Differentiating those boundary condition along the tangential direction~$\tau$, we find that we control
 \[\nab\tau\nab N\phi=\ls\nab\tau\tilde v,N\rs=\tilde f\]
 and since we also control~$v$ in~$H^{s-\epsilon}$, we control the Neumann data~$\partial_N\phi$. The same can be done on the bottom~$B$, so that we can use~$\Delta\phi=\dive\tilde v=\tilde g$ and our preceding regularity result for functions satisfying some elliptic problem with Neumann data on both sides to control~$\nabla\phi$ as expected. 
 
 At last, we find~$\Delta\tilde\phi=\nabla\times\tilde v=\tilde g$, and since~$\ls\nabla^\perp\tilde\phi,N\rs=\nab\tau\tilde\phi=0$, we find~$\tilde\phi$ to be constant on the boundary, and we can again use elliptic regularity for functions  with Dirichlet data at both sides to conclude.
 
\end{proof}

We notice that for the case~$\sigma=s-1$, the boundary data should be given under the form~$\ls v,N\rs$ and~$\ls v,\nu\rs$, because~$N\in H^{s-\frac32}(\c S)$ and therefore does not limit the regularity.

\section{Quasi-linearization}	\label{sec:quasilin}
In this section, we want to find a quantity satisfying the linearized equation. As explained in the introduction, we want to differentiate the equation in time. Since
\[\D_tv=-\nabla p-ge_n,\]
we look for an equation on~$\nabla p=\nabla p_{v,v}-g\nabla\c H(x^n\rvert_{\c S})+g\nabla x^n$ (the term~$ge_n$ does not depend on time.) As we will see later, the regularity of~$\nabla p$ is equivalent to the regularity of its normal part~$a=-\nab Np\rvert_{\c S}$, the Taylor coefficient, so we will in fact prove that~$a$ satisfies the linearized equation. 

\begin{prop}	\label{prop:quasilin}
 Let~$s>1+\frac n2$.
 Let~$\c S_*$ be an~$H^s$ reference hypersurface, and~$\delta$ small enough, so that in the corresponding~$\Lambda_*$, the maximal angle satisfies~$s<\frac12+\frac\pi{2\overline\omega}$, and all other geometric conditions imposed above apply. 
 
 Then if~$\c S_t$ is a continuous family of~$H^s$ hypersurface belonging to~$\Lambda_*$, if~$v\in C(H^s(\Omega_t))$, and if they satisfy the water waves equations, then 
 the Taylor coefficient~$a\in C(H^{s-1}(\c S_t))$ follows the equation
 \begin{equation}
  \D_t^2a+a\c Na=R.
 \end{equation}
 Here, the remainder~$R$ is in~$C(H^{s-\frac32}(\c S_t))$, with at each time~$t$
 \begin{equation}
  \lA R(t)\rA_{H^{s-\frac32}(\c S_t)}\leq Q\lp\la\c S_t\ra_s,\lA v\rA_{H^s(\Omega_t)}\rp,
 \end{equation}
 where~$Q$ is a time-independent polynomial in its variables, whose coefficients depend only on~$\Lambda_*$ and~$g$.
\end{prop}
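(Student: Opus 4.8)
First I would record that $a\in C(H^{s-1}(\c S_t))$. The physical pressure solves $\Delta p=-\tr((Dv)^2)$ in $\Omega_t$, $p|_{\c S_t}=0$, with bottom datum $\nab\nu p|_{\c B_t}=\Pi_{\c M}(v,v)-g\nu^n$ (coming from $\langle v,\nu\rangle=0$). Since $Dv\in H^{s-1}(\Omega_t)$ and $s-1>n/2$, the interior source lies in $H^{s-1}$ and the bottom datum in $H^{s-\frac12}(\c B_t)$; because the hypothesis $s<\mez+\frac\pi{2\overline\omega}$ makes the angle, not the data, the binding constraint, Theorem~\ref{thm:ellreg} and the local regularity lemma away from $\c L$ give $p\in C(H^{s}(\Omega_t))$ with polynomial bounds in $\la\c S_t\ra_s$ and $\lA v\rA_{H^s}$. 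A finer look at the Neumann trace on $\c S_t$ — splitting $p$ into an inhomogeneous-Dirichlet part and a harmonic part, and again using that the corner is non-binding — upgrades this to $\nabla p|_{\c S_t}\in C(H^{s-1}(\c S_t))$. Since $p|_{\c S_t}=0$ the tangential gradient vanishes, so $\nabla p|_{\c S_t}=-aN$ and $a=-\langle\nabla p,N\rangle|_{\c S_t}$; with $N\in C(H^{s-1}(\c S_t))$ and $s-1>\frac{n-1}2$ the product estimate on $\c S$ gives $a\in C(H^{s-1}(\c S_t))$, and $a\geq a_0$ gives $a^{-1}\in C(H^{s-1})$. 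The relation $\D_tv=-\nabla p-ge_n$ also shows $v$ is twice differentiable in time into rougher spaces, so $\D_t^2 v,\D_t^3 v,\D_t^2 p$ are well defined.

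\textbf{Differentiating twice in time.} On $\c S_t$ one has $a=\langle\D_tv,N\rangle+gN^n$. Differentiating along the particle flow, and using that restriction to $\c S_t$ commutes with $\D_t$, gives
\[\D_t^2 a=\langle\D_t^3 v,N\rangle|_{\c S}+2\langle\D_t^2 v,\D_t N\rangle|_{\c S}+\langle\D_tv,\D_t^2 N\rangle|_{\c S}+g(\D_t^2 N)^n .\]
Into this I would substitute $\D_t^2 v=-\D_t\nabla p$, $\D_t^3 v=-\D_t^2\nabla p$, the commutator $[\D_t,\nabla]f=-(Dv)\nabla f$ with $\D_t(Dv)=-D^2p-(Dv)^2$, and the evolution equation for the unit normal. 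The structural facts that do the work are: $\langle N,\D_tN\rangle=0$, $\langle N,\D_t^2 N\rangle=-\la\D_tN\ra^2$; that $p|_{\c S_t}=0$ forces $\D_tp|_{\c S_t}=\D_t^2p|_{\c S_t}=0$, so $\nabla\D_tp$ and $\nabla\D_t^2 p$ are purely normal on $\c S_t$; and the Hessian identities $D^2p(X,Y)|_{\c S}=a\,\Pi(X,Y)$ for tangent $X,Y$ and $D^2p(N,\tau)|_{\c S}=-\nab\tau a$, which with $\Delta p=-\tr((Dv)^2)$ give $D^2p(N,N)|_{\c S}=-\tr((Dv)^2)|_{\c S}-a\kappa$ and $(D^2p)N|_{\c S}=-(\tr((Dv)^2)|_{\c S}+a\kappa)N-\nabla^{\c S}a$. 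Collecting terms, the two pieces $\langle\D_tv,\D_t^2 N\rangle|_{\c S}$ and $g(\D_t^2 N)^n$ cancel up to lower order, every product of factors that are individually $H^{s-\frac32}$ (traces of $Dv$ and $(Dv)^2$, $\langle(Dv)N,N\rangle$, $\D_tN$, $\la\D_tN\ra^2$, $\partial_N\D_tp|_{\c S}$) is absorbed into $R$, and what survives is an order-two part built from $\partial_N\D_t^2 p|_{\c S}$ together with a curvature contribution $-a^2\kappa$.

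\textbf{Identification of the principal part.} This is the heart of the matter. The function $\D_t^2 p$ solves the elliptic problem~\eqref{eq:ellprob} with zero Dirichlet data on $\c S_t$, interior source $G'':=\D_t^2(-\tr((Dv)^2))+[\Delta,\D_t^2]p$ and a Neumann datum on $\c B_t$; expanding $G''$ with the commutators above and using the elliptic bounds already obtained for $p$ and $\D_tp$, one isolates the part of $\partial_N\D_t^2 p|_{\c S}$ that is not in $H^{s-\frac32}(\c S_t)$ and, by the self-adjoint Dirichlet-to-Neumann calculus of Section~\ref{subsec:DN} and the harmonic-extension structure, identifies it as $a\,\c Na$ together with a curvature term that exactly cancels the $-a^2\kappa$ of the previous step, leaving $\D_t^2 a+a\,\c Na=R$. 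The main obstacle is precisely this exact cancellation: the competing pieces all sit in $H^{s-2}(\c S_t)$, half a derivative below the target remainder space, so they cannot be matched merely modulo $H^{s-2}$ — the order-two terms must be tracked exactly. This is the analogue, for the Taylor coefficient, of the hidden wave structure behind the linearized equation~\eqref{eq:lin}, and it is where the positivity $a\geq a_0$ and the precise form of the pressure problem are used.

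\textbf{Control of $R$ and continuity in time.} Each term placed into $R$ is a fixed algebraic expression in the geometry of $\c S_t$ and $\c L_t$ (the curvatures $\kappa,\kappa_l$, controlled by $\la\c S_t\ra_s$ through the curvature lemmas), in the traces on $\c S_t$ and $\c B_t$ of $v$, $Dv$, $p$, $\D_tp$, $\D_t^2 p$ and their first derivatives — controlled by $\lA v\rA_{H^s(\Omega_t)}$ and $\la\c S_t\ra_s$ via the elliptic estimates of Theorem~\ref{thm:ellreg} (constants $C(1+\la\c S_t\ra_s)$, uniform in $\Lambda_*$) and, where $v$ itself must be reconstructed, the div-curl regularity result — and in the mapping properties of $\c N$. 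Combining these with the product estimates on $\Omega_t$, $\c S_t$ and $\c L_t$ yields $\lA R(t)\rA_{H^{s-\frac32}(\c S_t)}\leq Q\lp\la\c S_t\ra_s,\lA v\rA_{H^s(\Omega_t)}\rp$ with $Q$ a polynomial whose coefficients depend only on $\Lambda_*$ and $g$. Finally, since $v\in C(H^s(\Omega_t))$ and $\c S_t$ is a continuous family, every elliptic solution operator and every trace depends continuously on $t$, so $a\in C(H^{s-1}(\c S_t))$ and $R\in C(H^{s-\frac32}(\c S_t))$.
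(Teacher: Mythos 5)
There is a genuine gap at the decisive step. Your first and last paragraphs (regularity of $a$, bookkeeping of the remainder, continuity in time) follow the same lines as the paper, and your overall skeleton — two time derivatives, elliptic problems for $\D_tp$ and $\D_t^2p$, lower-order contributions from $\D_tN$ and $\D_t^2N$ — is the right one. (Two small inaccuracies: the paper obtains $p\in C(H^{s+\frac12}(\Omega_t))$, not merely $H^s$, and it needs that extra half derivative to put $\ls D^2p,\nabla p\rs^\sharp$ in $H^{s-\frac32}(\Omega_t)$ and its normal trace in $H^{s-\frac32}(\c S_t)$; and your bottom Neumann datum for $p$ has the wrong sign.) But the paragraph you yourself call "the heart of the matter" is not a proof: you assert that the non-$H^{s-\frac32}$ part of $\partial_N\D_t^2p\rvert_{\c S}$ is $a\,\c Na$ plus a curvature term that exactly cancels the $-a^2\kappa$ produced by your surface differentiation, and you correctly observe that these competing terms live in $H^{s-2}(\c S_t)$ so the cancellation must be exact — yet you give no mechanism by which it happens. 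Naming the obstacle is not the same as overcoming it.

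The paper's mechanism is a specific algebraic identity that your route has no substitute for. Working in the domain rather than on the surface, after the two differentiations the principal part of $\D_t^2\nabla p$ is
\[\ls D^2p,\nabla p\rs^\sharp-\nabla\Delta^{-1}\lp0,D^2p(\nu,\nabla p)\rp,\]
and since $\ls D^2p,\nabla p\rs^\sharp=\frac12\nabla\lp\la\nabla p\ra^2\rp$ and $D^2p(\nu,\nabla p)=\frac12\nab\nu\la\nabla p\ra^2$, this sum is $\frac12\nabla\c H\lp\la\nabla p\ra^2\rvert_{\c S_t}\rp$ minus the gradient of a function solving a problem with zero data on both boundary pieces and an $H^{s-1}(\Omega_t)$ source, hence more regular. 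Because $\nabla p=\nab NpN$ on $\c S_t$, one has $\la\nabla p\ra^2\rvert_{\c S_t}=a^2$, and $\c H(a^2)=\c H(a)^2$ modulo $H^{s+\frac12}(\Omega_t)$, so the normal trace produces exactly $a\,\c Na$ — no curvature term ever appears, and no delicate cancellation needs to be verified. Your surface-based computation, by contrast, generates $a^2\kappa$-type terms from the Hessian identities and must then cancel them against an unidentified piece of $\partial_N\D_t^2p\rvert_{\c S}$; until you exhibit that piece explicitly (which essentially forces you back to the domain computation above), the equation $\D_t^2a+a\,\c Na=R$ with $R\in H^{s-\frac32}(\c S_t)$ is not established.
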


\begin{proof}
 This is only a long computation. To start with, we recall that~$p$ is the solution of the elliptic equation
 \begin{equation} 	\label{eq:pressell}
	\lB
	\begin{aligned}
		&\Delta p=-\mathrm{tr}\lp(Dv)^2\rp\text{ in }\Omega_t,\\
		&p\rvert_{\c S_t}=0,\\
		&\nab\nu p\rvert_{\c B_t}=-\Pi_{\c M}(v,v)+g\nu^n
	\end{aligned}
	\right.
\end{equation}

where~$\Pi_{\c M}(v,w)=-\ls\nab{\nu}v,w\rs$ is the second fundamental form of~$\c M$ and~$\nu^n$ is the component of~$\nu$ along~$e_n$. 
As a consequence of Theorem~\ref{thm:ellreg}, recalling that~$\Pi_{\c M}$ is smooth, we find that~$p\in C(H^{s+\frac12}(\Omega_t))$ with for each time
\begin{equation}
 \lA p\rA_{H^{s+\frac12}(\Omega_t)}\leq Q\lp\la\c S_t\ra_s,\lA v\rA_{H^{s-\frac12}(\Omega_t)}\rp
\end{equation}
where~$Q$ is as in the Proposition. The fact that we can use only the~$H^{s-\frac12}$ norm of~$v$ means that the regularity of~$p$ is limited by the domain, and not its data.

Then
\begin{equation}	\label{eq:evgradpprep}
 \D_t\nabla p=-\ls\nabla v,\nabla p\rs^\sharp+\nabla\D_tp,
\end{equation}
where~$\sharp$ is the musical isomorphism corresponding to raising indices with the metric.
We want to find the elliptic problem satisfied by~$\D_tp$.

\paragraph{Elliptic problem for~$\D_tp$.}
First, using~\eqref{eq:pressell},
\begin{equation}
 \D_tp\rvert_{\c S_t}=\D_t\lp p\rvert_{\c S_t}\rp=0.
\end{equation}

In~$\Omega_t$,
\begin{equation*}
 \Delta\D_tp=\D_t\Delta p+2\tr\lp D^2p\cdot Dv\rp+\ls\Delta v,\nabla p\rs,
\end{equation*}
where~$\cdot$ represents the matrix product. But, using~\eqref{eq:pressell}, we find
\begin{equation*}
 \D_t\Delta p=-\D_t\tr\lp(Dv)^2\rp=-2\tr\lp D\D_t v\cdot Dv\rp+2\tr\lp(Dv)^3\rp,
\end{equation*}
and the Euler equations give
\begin{equation*}
 \D_t\Delta p=2\tr\lp D^2p\cdot Dv\rp+2\tr\lp(Dv)^3\rp.
\end{equation*}
Thus we have proved that in~$\Omega_t$,
\begin{equation}
 \Delta\D_tp=4\tr\lp D^2p\cdot Dv\rp+2\tr\lp(Dv)^3\rp+\ls\Delta v,\nabla p\rs.
\end{equation}

Since both~$\nu$ and~$\Pi_{\c M}$ are smooth and independent of~$t$, we compute easily that on points of~$\c B_t$,
\begin{equation}	\label{eq:evnorbot}
 \D_t\nu\rvert_{\c B}=\nab v\nu
\end{equation}
and for~$w$ and~$w'$ tangent to~$\c B$,
\begin{equation}	\label{eq:ev2fundbot}
 \D_t\lp\Pi_{\c M}(w,w')\rp=\Pi_{\c M}(\D_t^\top w,w')+\Pi_{\c M}(w,\D_t^\top w')-\ls D^2\nu(v,w),w'\rs,
\end{equation}
where~$\top$ represents the orthogonal projection to the tangent plane of~$\c M$.

On the other hand,
\begin{equation*}
 \nab\nu\lp\D_tp\rp\rvert_{\c B_t}=\D_t\lp\nab\nu p\rvert_{\c B_t}\rp+\ls\nab\nu v,\nabla p\rs\rvert_{\c B_t}+\Pi_{\c M}(v,\nabla^\top p). 
\end{equation*}
Using the elliptic equation~\eqref{eq:pressell} on~$p$ and the preceding computations~\eqref{eq:evnorbot} and~\eqref{eq:ev2fundbot}, we find
\begin{equation*}
 \D_t\lp\nab\nu p\rvert_{\c B_t}\rp=-2\Pi_{\c M}(\D_t^\top v,v)+\ls D^2\nu(v,v),v\rs\rvert_{\c B_t}+g\lp\nab v\nu\rp^n\rvert_{\c B_t}.
\end{equation*}
Euler equations let us conclude
\begin{equation*}
 \D_t\lp\nab\nu p\rvert_{\c B_t}\rp=2\Pi_{\c M}(\nabla^\top p,v)+\ls D^2\nu(v,v),v\rs\rvert_{\c B_t}-g\lp\nab v\nu\rp^n\rvert_{\c B_t}.
\end{equation*}
Therefore,
\begin{equation}
  \nab\nu\lp\D_tp\rp\rvert_{\c B_t}=3\Pi_{\c M}(\nabla^\top p,v)+\ls D^2\nu(v,v),v\rs\rvert_{\c B_t}-g\lp\nab v\nu\rp^n\rvert_{\c B_t}+\ls\nab\nu v,\nabla p\rs\rvert_{\c B_t}. 
\end{equation}

By grouping together terms of same regularity, we thus find for~$\D_tp$ the expression
\begin{equation}	\label{eq:ev1p}
\begin{aligned}
 \D_tp=&\Delta^{-1}\lp\ls\Delta v,\nabla p\rs,\ls\nab\nu v,\nabla p\rs\rp\\
 &+\Delta^{-1}\lp 4\tr\lp D^2p\cdot Dv\rp+2\tr\lp(Dv)^3\rp,3\Pi_{\c M}(\nabla^\top p,v)+\ls D^2\nu(v,v),v\rs-g(\nab v\nu)^n\rp,
 \end{aligned}
\end{equation}
where~$\Delta^{-1}(g,h)$ is the solution of
\begin{equation*} 	
	\lB
	\begin{aligned}
		&\Delta u=g\text{ in }\Omega_t,\\
		&u\rvert_{\c S_t}=0,\\
		&\nab\nu u\rvert_{\c B_t}=h.
	\end{aligned}
	\right.
\end{equation*}

The elliptic regularity Theorem~\ref{thm:ellreg}, combined with product estimates gives us that the first term of~\eqref{eq:ev1p} is in~$H^s(\Omega_t)$, while the second is in~$H^{s+\frac12}$.

If we plug this into~\eqref{eq:evgradpprep}, we find
\begin{equation}	\label{eq:ev1gradp}
\begin{aligned}
 \D_t\nabla p=&-\ls\nabla v,\nabla p\rs^\sharp+\nabla\Delta^{-1}\lp\ls\Delta v,\nabla p\rs,\ls\nab\nu v,\nabla p\rs\rp\\
 &+\nabla\Delta^{-1}\lp 4\tr\lp D^2p\cdot Dv\rp+2\tr\lp(Dv)^3\rp,3\Pi_{\c M}(\nabla^\top p,v)+\ls D^2\nu(v,v),v\rs-g(\nab v\nu)^n\rp.
 \end{aligned}
\end{equation}
The first line of the right-hand side is in~$H^{s-1}(\Omega_t)$, the second in~$H^{s-\frac12}(\Omega_t)$.
Let us call 
\begin{equation*}
 \alpha:=\Delta^{-1}\lp\ls\Delta v,\nabla p\rs,\ls\nab\nu v,\nabla p\rs\rp,
\end{equation*}
and
\begin{equation*}
 \beta:=\Delta^{-1}\lp 4\tr\lp D^2p\cdot Dv\rp+2\tr\lp(Dv)^3\rp,3\Pi_{\c M}(\nabla^\top p,v)+\ls D^2\nu(v,v),v\rs-g(\nab v\nu)^n\rp,
\end{equation*}
so that in short,
\begin{equation}	\label{eq:ev1gradpshort}
 \D_t\nabla p=-\ls\nabla v,\nabla p\rs^\sharp+\nabla\alpha+\nabla\beta.
\end{equation}

Now we need to compute the second derivative in time. For this, we compute the derivative of each of the three terms of~\eqref{eq:ev1gradpshort}.

\paragraph{First term.}
We first compute
\begin{equation*}
 -\D_t\ls\nabla v,\nabla p\rs^\sharp=-\ls\nabla\D_tv,\nabla p\rs^\sharp+\ls\nab{\nabla v}v,\nabla p\rs^\sharp-\ls\nabla v,\D_t\nabla p\rs^\sharp.
\end{equation*}
From Euler equations, the first term is
\begin{equation*}
 -\ls\nabla\D_tv,\nabla p\rs^\sharp=\ls D^2p,\nabla p\rs^\sharp.
\end{equation*}
Using the evolution of~$\nabla p$ from~\eqref{eq:ev1gradpshort} to express the third term, we find
\begin{equation*}
 -\D_t\ls\nabla v,\nabla p\rs^\sharp=\ls D^2p,\nabla p\rs^\sharp+2\ls\nab{\nabla v}v,\nabla p\rs^\sharp-\ls\nabla v,\nabla\alpha\rs^\sharp-\ls\nabla v,\nabla\beta\rs^\sharp.
\end{equation*}
Using the product estimates we can sum this up as
\begin{equation}
 -\D_t\ls\nabla v,\nabla p\rs^\sharp=\ls D^2p,\nabla p\rs^\sharp+R
\end{equation}
with~$R$ satisfying
\begin{equation}
 \lA R\rA_{H^{s-1}(\Omega_t)}\leq Q\lp\la\c S_t\ra_s,\lA v\rA_{H^s(\Omega_t)}\rp
\end{equation}
with~$Q$ as in the Proposition.

\paragraph{Second term.}
We now compute
\begin{equation}
 \D_t\nabla\alpha=\nabla\D_t\alpha-\ls\nabla v,\nabla\alpha\rs^\sharp.
\end{equation}
To find an expression for~$\D_t\alpha$, we use the same method as for~$p$. We look for an elliptic problem it satisfies.

It is immediate that
\begin{equation}	\label{eq:evalphaS}
 \D_t\alpha\rvert_{\c S_t}=0.
\end{equation}

In the domain~$\Omega_t$,
\begin{equation}	\label{eq:lapevalpha1}
 \Delta\D_t\alpha=\D_t\Delta\alpha+2\tr\lp D^2\alpha\cdot Dv\rp+\ls\Delta v,\nabla\alpha\rs.
\end{equation}
Let us concentrate on the first term, using that
\begin{equation*}
 \Delta\alpha=\ls\Delta v,\nabla p\rs.
\end{equation*}
We find
\begin{equation*}
 \D_t\Delta\alpha=\ls\Delta\D_tv,\nabla p\rs-2\tr\lp\ls D^2v,\nabla p\rs\cdot Dv\rp-\ls\nab{\Delta v}v,\nabla p\rs+\ls\Delta v,\D_t\nabla p\rs.
\end{equation*}
Thanks once again to Euler's equations, 
\begin{equation*}
 \ls\Delta\D_tv,\nabla p\rs=-\ls\Delta\nabla p,\nabla p\rs=\ls\nabla\tr\lp(Dv)^2\rp,\nabla p\rs,
\end{equation*}
so that again, using the evolution of~$\nabla p$ from~\eqref{eq:ev1gradpshort}, 
\begin{equation*}
 \D_t\Delta\alpha=\ls\nabla\tr\lp(Dv)^2\rp,\nabla p\rs-2\tr\lp\ls D^2v,\nabla p\rs\cdot Dv\rp-2\ls\nab{\Delta v}v,\nabla p\rs+\ls\Delta v,\nabla\alpha\rs+\ls\Delta v,\nabla\beta\rs.
\end{equation*}
Combined with~\eqref{eq:lapevalpha1}, this gives
\begin{multline}	\label{eq:evalphaDel}
 \D_t\Delta\alpha=\ls\nabla\tr\lp(Dv)^2\rp,\nabla p\rs-2\tr\lp\ls D^2v,\nabla p\rs\cdot Dv\rp\\
 -2\ls\nab{\Delta v}v,\nabla p\rs+2\ls\Delta v,\nabla\alpha\rs+\ls\Delta v,\nabla\beta\rs+2\tr\lp D^2\alpha\cdot Dv\rp.
\end{multline}

At last on~$\c B_t$,
\begin{equation*}
 \nab\nu\D_t\alpha=\D_t(\nab\nu\alpha)+\ls\nab\nu v,\nabla\alpha\rs-\ls\nab v\nu,\nabla \alpha\rs.
\end{equation*}
The first term can be computed from
\begin{equation*}
 \nab\nu\alpha\rvert_{\c B_t}=\ls\nab\nu v,\nabla p\rs,
\end{equation*}
and is
\begin{equation*}
 \D_t(\nab\nu\alpha)=\ls\nab\nu\D_tv,\nabla p\rs+\ls\nab{\nab v\nu}v,\nabla p\rs-\ls\nab{\nab\nu v}v,\nabla p\rs+\ls\nab\nu v,\D_t\nabla p\rs.
\end{equation*}
Using Euler equations and~\eqref{eq:ev1gradpshort} once again,
\begin{equation}	\label{eq:evalphaB}
 \nab\nu\D_t\alpha=-D^2p(\nu,\nabla p)+\ls\nab{\nab v\nu}v,\nabla p\rs-2\ls\nab{\nab\nu v}v,\nabla p\rs+2\ls\nab\nu v,\nabla\alpha\rs+\ls\nab\nu v,\nabla\beta\rs+\Pi_{\c M}(v,\nabla^\top\alpha).
\end{equation}

Combining~\eqref{eq:evalphaS}, \eqref{eq:evalphaDel} and~\eqref{eq:evalphaB} with elliptic and product estimates, we find
\begin{equation}
 \D_t\nabla\alpha=-\nabla\Delta^{-1}\lp0,D^2p(\nu,\nabla p)\rp +R
\end{equation}
with
\begin{equation}
 \lA R\rA_{H^{s-1}(\Omega_t)}\leq Q\lp\la\c S_t\ra_s,\lA v\rA_{H^s(\Omega_t)}\rp.
\end{equation}

\paragraph{Third term.}
We finish by computing
\begin{equation}
 \D_t\nabla\beta=\nabla\D_t\beta-\ls\nabla v,\nabla\beta\rs^\sharp.
\end{equation}
Again we look for an elliptic problem on~$\D_t\beta$.

As above,
\begin{equation}
 \D_t\beta\rvert_{\c S_t}=0.
\end{equation}

In~$\Omega_t$, 
\begin{equation*}
 \Delta\D_t\beta=\D_t\Delta\beta+2\tr\lp D^2\beta\cdot Dv\rp+\ls\Delta v,\beta\alpha\rs,
\end{equation*}
and
\begin{multline*}
 \D_t\Delta\beta=4\tr\lp D\D_t\nabla p\cdot Dv\rp+4\tr\lp D^2p\cdot D\D_t v\rp-8\tr\lp D^2p\cdot Dv\cdot Dv\rp\\
 +6\tr\lp D\D_tv\cdot Dv\cdot Dv\rp-6\tr\lp(Dv)^4\rp.
\end{multline*}
At the end end, each of those terms lies in~$H^{s-2}(\Omega_t)$, so that
\begin{equation}
 \lA \Delta\D_t\beta\rA_{H^{s-2}(\Omega_t)}\leq Q\lp\la\c S_t\ra_s,\lA v\rA_{H^s(\Omega_t)}\rp.
\end{equation}

On the bottom,
\begin{equation*}
 \nab\nu\D_t\beta\rvert_{\c B_t}=\D_t\lp\nab\nu\beta\rvert_{\c B_t}\rp+\ls\nab\nu v,\nabla\beta\rs-\ls\nab v\nu,\nabla\beta\rs,
\end{equation*}
Since
\begin{equation*}
 \nab\nu\beta\rvert_{\c B_t}=3\Pi_{\c M}\lp\nabla^\top p,v\rp+\ls D^2\nu(v,v),v\rs-g\lp\nab v\nu\rp^n,
\end{equation*}
we only need to compute the evolution of each of those three terms.
The first gives
\begin{equation*}
 3\D_t\Pi_{\c M}\lp\nabla^\top p,v\rp=3\Pi_{\c M}\lp\D_t^\top\nabla^\top p,v\rp+3\Pi_{\c M}\lp\nabla^\top p,\D_t^\top v\rp-3\ls D^2\nu(\nabla^\top p,v),v\rs,
\end{equation*}
which is easily seen to lie in~$H^{s-\frac32}(\c B_t)$.
For the second term,
\begin{equation*}
 \D_t\ls D^2\nu(v,v),v\rs=\ls D^4\nu(v,v,v),v\rs+2\ls D^2\nu(\D_tv,v),v\rs+\ls D^2\nu(v,v),\D_tv\rs,
\end{equation*}
which is again in~$H^{s-\frac32}(\c B_t)$.
The last term is 
\begin{equation*}
 -g\D_t\lp\nab v\nu\rp^n=-g\lp\nab{\D_tv}\nu\rp^n+g\lp D^2\nu(v,v)\rp^n,
\end{equation*}
still in~$H^{s-\frac32}(\c B_t)$.
Putting all of this together, we see that
\begin{equation}
 \lA \nab\nu\D_t\beta\rA_{H^{s-\frac32}(\c B_t)}\leq Q\lp\la\c S_t\ra_s,\lA v\rA_{H^s(\Omega_t)}\rp.
\end{equation}

Thus in the end, using elliptic regularity,
\begin{equation}
 \lA\D_t\beta\rA_{H^s(\Omega_t)}\leq Q\lp\la\c S_t\ra_s,\lA v\rA_{H^s(\Omega_t)}\rp.
\end{equation}

\paragraph{Equation on~$\nabla p$.}
We have therefore proved that
\begin{equation*}
 \D_t^2\nabla p=\ls D^2p,\nabla p\rs^\sharp-\nabla\Delta^{-1}\lp0,D^2p(\nu,\nabla p)\rp+R
\end{equation*}
where
\begin{equation*}
 \lA R\rA_{H^{s-1}(\Omega_t)}\leq Q\lp\la\c S_t\ra_s,\lA v\rA_{H^s(\Omega_t)}\rp.
\end{equation*}
To finish, we remark that the first two terms of the right-hand side can be rewritten as 
\begin{equation*}
 \frac12\nabla\lp\la\nabla p\ra^2\rp-\frac12\nabla\Delta^{-1}\lp0,\nab\nu\la\nabla p\ra^2\rp=\frac12\nabla\c H\lp\la\nabla p\ra^2\rvert_{\c S_t}\rp-\nabla f,
\end{equation*}
where~$f$ is solution of
 \begin{equation*} 
	\lB
	\begin{aligned}
		&\Delta f=2\ls\nabla p,\nabla\tr\lp(Dv)^2\rp\rs+2\tr\lp(D^2p)^2\rp\text{ in }\Omega_t,\\
		&f\rvert_{\c S_t}=0,\\
		&\nab\nu f\rvert_{\c B_t}=0,
	\end{aligned}
	\right.
\end{equation*}
and is therefore in~$H^s(\Omega_t)$.
Remarking that on~$\c S_t$, because~$p\rvert_{\c S_t}=0$, $\nabla p=\nab NpN$, we find
\begin{equation*}
 \D_t^2\nabla p=\frac12\nabla\c H(a^2)+R
\end{equation*}
with~$R$ as above.
To finish, we notice that
\begin{equation*}
 \c H(a^2)=\lp\c H(a)\rp^2+H^{s+\frac12}(\Omega_t)
\end{equation*}
because of the elliptic problem satisfied by the difference, so that in the end,
\begin{equation}	\label{eq:ev2gradp}
 \D_t^2\nabla p=\c H(a)\nabla\c H(a)+R
\end{equation}
with
\begin{equation}
 \lA R\rA_{H^{s-1}(\Omega_t)}\leq Q\lp\la\c S_t\ra_s,\lA v\rA_{H^s(\Omega_t)}\rp.
\end{equation}

\paragraph{Equation on~$a$.}

To transform this into an equation on~$a$, we need to compute the evolution of the normal to the surface, $\D_tN$. We redo the computations of Shatah and Zheng in~\cite{ShatahZeng1}. First, because~$\la N\ra=1$, 
\[\D_tN\perp N.\]
Then we can choose~$\tau_0$ tangent to~$\c S_{t_0}$ at the point~$x_0\in\c S_{t_0}$, and transport it in time as a solution of
\begin{equation*}
 \D_t\tau=\nab\tau v,\quad\tau(t_0)=\tau_0.
\end{equation*}
Immediatly,
\begin{equation*}
 \ls\D_t N,\tau_0\rs=-\ls N,\D_t\tau\rs=-\ls\nab{\tau_0}v,N\rs.
\end{equation*}
Thus
\begin{equation}	\label{eq:EvN}
 \D_tN=-\lp\ls\nabla v,N\rs^\sharp\rp^\top\in H^{s-\frac32}(\c S_t).
\end{equation}
This also gives
\begin{equation}
 \ls\D_t^2N,N\rs=-\la\D_tN\ra^2\in H^{s-\frac32}(\c S_t).
\end{equation}

Now since~$a=-\ls\nabla p,N\rs\rvert_{\c S_t}$, we find
\begin{equation*}
 \D_t^2a=-\ls\D_t^2\nabla p,N\rs-2\ls\D_t\nabla p,\D_tN\rs+a\ls\D_t^2N,N\rs,
\end{equation*}
and the last two terms are in~$H^{s-\frac32}(\c S_t)$. Thus by taking the scalar product with~$N$ of the trace of~\eqref{eq:ev2gradp}, 
\begin{equation*}
 \D_t^2a+a\c Na=H^{s-\frac32}(\c S_t),
\end{equation*}
with estimates on the remainder as in the Proposition.

\end{proof}

\section{The energy}	\label{sec:Energy}

Using this quasi-linear form for the equations, an obvious choice for the energy is
\begin{equation}
 E:=\int_{\c S_t}\la\c N^{s-\frac32}\D_ta\ra^2\d S+\int_{\c S_t}a\la\c N^{s-1}a\ra^2\d S+\lA\mu\rA^2_{H^{s-1}(\Omega_t)}.
\end{equation}
Here, we use the powers of the Dirichlet to Neumann map~$\c N$, defined in Subsection~\ref{subsec:DN}, and we integrate according to the surface element~$\d S$ of~$\c S_t$. The first two terms correspond to the energy for the linearized equation satisfied by~$a$. We will see that those terms do not control the vorticity part of~$v$, and therefore we added the~$H^{s-1}(\Omega_t)$ norm of~$\mu:=Dv-Dv^*$, which will be well controlled because the vorticity is a conservation law for Euler equations. 

We first need to show that one can recover the original unknowns from this energy.
\begin{prop}	\label{prop:controlE}
 Let~$s>1+\frac n2$.
 Let~$\c S_*$ be an~$H^s$ reference hypersurface, and~$\delta$ small enough, so that in the corresponding~$\Lambda_*$, the maximal angle satisfies~$s<\frac12+\frac\pi{2\overline\omega}$, and all other geometric conditions imposed above apply. 
 
 Then if~$\c S$ is an~$H^s$ hypersurface belonging to~$\Lambda_*$ and~$v\in H^s(\Omega_t)$ are solution of the equations, and if the Taylor condition~$a\geq a_0>0$ is satisfied, then~$E$ is well-defined, finite, and we have
 \[\la\c S\ra^2_s+\lA v\rA^2_{H^s(\Omega)}\leq \c F\lp\lA v\rA_{H^{s-\frac12}(\Omega)},E\rp\]
where~$\c F$ is a non-decreasing continuous function of its arguments, depending only on~$\Lambda_*$, $a_0$ and~$\underline{\omega}$.
\end{prop}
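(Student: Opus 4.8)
The plan is to recover the two unknowns in three nested stages, climbing from the lowest order — which is either built into the hypothesis $\c S\in\Lambda_*$ or appears as an argument of $\c F$ — up to the top order. Throughout, the decisive structural fact is that the elliptic regularity of Theorem~\ref{thm:ellreg} and the div-curl estimate have $\Lambda_*$-uniform constants at every level strictly below the critical one, which is exactly where $s<\frac12+\frac\pi{2\overline\omega}$ is spent. Since $\c S$ is $H^s$ and $v\in H^s(\Omega)$ by assumption, all quantities are finite a priori, so the estimates need only be \emph{closed}, not established qualitatively.

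First, one recovers the boundary data from $E$. Because $a\geq a_0>0$, the middle term gives $a_0\lA\c N^{s-1}a\rA_{L^2(\c S)}^2\leq E$; combining the spectral calculus of the self-adjoint operator $\c N$ with its ellipticity (the Dirichlet-to-Neumann Proposition) and with a crude $L^2(\c S)$ bound on $a$ — itself coming from the pressure equation \eqref{eq:pressell} and from Theorem~\ref{thm:ellreg} at a sub-critical level, hence $\Lambda_*$-uniformly — one obtains $\lA a\rA_{H^{s-1}(\c S)}\leq\c F\lp\lA v\rA_{H^{s-\frac12}(\Omega)},E\rp$. Treating the first and third terms the same way yields $\lA\D_ta\rA_{H^{s-\frac32}(\c S)}\leq\c F(\cdot)$ and $\lA\mu\rA_{H^{s-1}(\Omega)}^2\leq E$, and shows along the way that $E$ is finite and well defined.

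Next, the geometry. By the curvature-to-surface Lemma of Section~\ref{sec:analysis}, bounding $\la\c S\ra_s$ (and $\la\c L\ra_{s-\frac12}$) reduces to bounding the mean curvature $\kappa$ in $H^{s-2}(\c S)$ (and $\kappa_l$ in $H^{s-\frac52}(\c L)$). The key device is to apply the div-curl estimate to the gradient field $\nabla p$: since $p\rvert_{\c S}=0$ and $\nabla p\rvert_{\c S}=-aN$, all second-fundamental-form contributions cancel, and the $\c S$-boundary datum reduces exactly to $\lp\ls\nabla(\nabla p),N\rs^\sharp\rp^\top\big|_{\c S}=-\nabla^{\c S}a$, controlled in $H^{s-2}(\c S)$ by $\lA a\rA_{H^{s-1}(\c S)}$ with a $\Lambda_*$-uniform constant — so there is no feedback through $\la\c S\ra_s$. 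With $\Delta p=-\tr\lp(Dv)^2\rp$ and $\nab\nu p\rvert_{\c B}=-\Pi_{\c M}(v,v)+g\nu^n$ controlled at low order, the div-curl estimate gives $\nabla p\in H^{s-\frac12}(\Omega)$, hence $b:=\nab N\nab Np\rvert_{\c S}\in H^{s-2}(\c S)$. Differentiating \eqref{eq:pressell} in the normal direction at $\c S$ yields, up to lower-order terms built from $a$ and $\Pi$, the identity $a\,\kappa=b+\tr\lp(Dv)^2\rp\rvert_{\c S}$, so dividing by $a\geq a_0$ bounds $\kappa$. The contact-line curvature $\kappa_l$ is treated by the same scheme with the boundary versions of the elliptic and div-curl results and the lower bound $\omega\geq\underline\omega$, which keeps $\ls\nu,N\rs$ away from zero so that the trace identities at $\c L$ remain usable.

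Finally, the velocity, by the div-curl estimate applied to $v$ itself at level $s$: $\nabla\cdot v=0$, $\nabla\times v=\mu$ with $\lA\mu\rA_{H^{s-1}(\Omega)}^2\leq E$, $\ls v,\nu\rs=0$ on $\c B$, and on $\c S$ the datum $\lp\ls\nabla v,N\rs^\sharp\rp^\top=-\D_tN$ by \eqref{eq:EvN}. One bounds $\D_tN$ in $H^{s-\frac32}(\c S)$ through $\ls\nab\tau v,N\rs=\nabla^{\c S}_\tau v^\perp-\ls v,\nabla^{\c S}_\tau N\rs$, using the now-controlled $N\in H^{s-1}(\c S)$, the vorticity through $\mu$, and lower-order information on $v$; then the div-curl estimate gives $\lA v\rA_{H^s(\Omega)}\leq C\lp\lA\mu\rA_{H^{s-1}}+\lA\D_tN\rA_{H^{s-\frac32}}+\lA v\rA_{L^2}\rp$, the $L^2$-term being (after interpolation) an $H^{s-\eps}$-term, an argument of $\c F$. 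Collecting the three stages gives the stated inequality, with $\c F$ continuous, non-decreasing, and depending only on $\Lambda_*$, $a_0$ and $\underline\omega$. The principal difficulty is the mutual coupling of the top-order norms of $\c S$, $p$ and $v$ over a domain whose elliptic theory degenerates at the edge; what makes the loop close rather than spiral is precisely that the div-curl problem for $\nabla p$ — thanks to the cancellation $p\rvert_{\c S}=0$ — and the ensuing control of $\kappa$ involve only $\Lambda_*$-uniform constants.
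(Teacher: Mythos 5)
Your first two stages track the paper's proof closely (control of $a$, $\D_ta$, $\mu$ from $E$ via the ellipticity of $\c N$ and the Taylor condition; control of $\kappa$ and $\kappa_l$ from $\nabla p$ through $\kappa=\frac1a\lp\Delta p-D^2p(N,N)\rp$ and the lower bound on~$\ls n,N\rs$ coming from~$\underline\omega$). But the third stage — recovering $\lA v\rA_{H^s(\Omega)}$ — has a genuine circularity. You propose to bound the $\c S$-datum $\lp\ls\nabla v,N\rs^\sharp\rp^\top=-\D_tN$ in $H^{s-\frac32}(\c S)$ via the identity $\ls\nab\tau v,N\rs=\nabla^{\c S}_\tau v^\perp-\ls v,\nabla^{\c S}_\tau N\rs$. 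The term $\nabla^{\c S}_\tau v^\perp$ requires $v^\perp\in H^{s-\frac12}(\c S)$, i.e.\ the top-order trace of $v$ on $\c S$ — precisely what the div-curl estimate is supposed to deliver. The ``lower-order information'' $v\in H^{s-\frac12}(\Omega)$ only gives a trace in $H^{s-1}(\c S)$, half a derivative short, and the vorticity $\mu$ merely exchanges $\ls\nab\tau v,N\rs$ against $\ls\nab Nv,\tau\rs$ without supplying the missing regularity. A telltale sign is that in your velocity step the parts of $E$ involving $a$ and $\D_ta$ are never used: yet the irrotational part of $v$ at top order can only come from there. The paper's resolution is to rewrite the datum via the Euler equation: since $\nabla p=-aN$ on $\c S$, one has $(\D_t\nabla p)^\top=-a\,\D_tN$, hence $\lp\ls\nabla v,N\rs^\sharp\rp^\top=\frac1a(\D_t\nabla p)^\top$, and $\D_t\nabla p$ is itself controlled in $H^{\sigma-1}(\Omega)$ by a \emph{separate} div-curl problem whose $\c S$-datum is $\ls\D_t\nabla p,N\rs=-\D_ta$, which is what the first term of $E$ controls. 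This chain $E\to\D_ta\to\D_t\nabla p\to$ (datum for $v$) is the missing idea.

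A secondary point: you assert directly that the div-curl estimate gives $\nabla p\in H^{s-\frac12}(\Omega)$ from data ``controlled at low order,'' and likewise that $\tr\lp(Dv)^2\rp\rvert_{\c S}\in H^{s-2}(\c S)$ in the curvature step. With only $s>1+\frac n2$ (rather than $s>\frac32+\frac n2$), the source terms such as $\tr\lp(Dv)^2\rp$ and $\tr\lp(Dv)^3\rp$ are \emph{not} at the required Sobolev level when one only knows $v\in H^{s-\frac12}(\Omega)$; the paper explicitly flags this and runs a bootstrap in $\sigma\in(s-\frac12,s]$, interleaving the estimates on $\nabla p$, $\D_t\nabla p$ and $v$ and absorbing the intermediate norm $\lA v\rA_{H^{\sigma-\eps}}$ by interpolation. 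Your single pass through the estimates does not close at the stated regularity; the ordering (geometry before velocity) also forces you to take the trace of $\tr\lp(Dv)^2\rp$ before $v\in H^s(\Omega)$ is available, which the paper avoids by controlling $v$ first.
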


\begin{proof}
 If~$\c S$ is in~$H^s$ and~$v$ is in~$H^s(\Omega)$, our elliptic regularity theory gives~$\nabla p$ in~$H^{s-\frac12}$ and therefore~$a\in H^{s-1}(\c S)$, while the formula for~$\D_ta$ tells us it lies in~$H^{s-\frac32}(\c S)$. Thus the quantities composing~$E$ are all well-defined. 
 
 Now to prove our result, we start with the basic preliminary controls 
 \begin{equation*}
  \lA \nabla p\rA_{H^{s-1}(\Omega)}\leq C\lA v\rA^2_{H^{s-\frac12}(\Omega)}
 \end{equation*}
 because of the elliptic equation~\eqref{eq:pressell} on the pressure, and
 \begin{equation*}
  \lA\D_t\nabla p\rA_{H^{s-\frac32}}(\Omega)\leq\c F\lp\lA v\rA_{H^{s-\frac12}(\Omega)}\rp,
 \end{equation*}
 because of~\eqref{eq:ev1gradpshort} and the elliptic equations on~$\alpha$ and~$\beta$.
 We notice that
 \[\D_ta=-\ls\D_t\nabla p,N\rs-\ls\nabla p,\D_tN\rs=-\ls\D_t\nabla p,N\rs,\]
 because~$\D_t N$ is tangent to~$\c S$ and~$\nabla p$ is normal. Thus
 \begin{equation}
  \lA a\rA_{H^{s-\frac32}(\c S)}+\lA\D_t a\rA_{H^{s-2}(\c S)}\leq\c F\lp\lA v\rA_{H^{s-\frac12}(\Omega)}\rp.
 \end{equation}
 Here the traces on~$\c S$ are well-defined because~$s>2$.
 
 The next step is to use the ellipticity of~$\c N$ to find
 \begin{align*}
  \lA a\rA_{H^{s-1}(\c S)}&\leq \c F\lp\lA v\rA_{H^{s-\frac12}(\Omega)}\rp\lb\lA\c N^{s-1}a\rA_{L^2(\c S)}+1\rb\\
  &\leq\c F\lp\lA v\rA_{H^{s-\frac12}(\Omega)}\rp\lb\lA\frac1{\sqrt a}\rA_{L^\infty(\c S)}\lA\sqrt a\c N^{s-1}a\rA_{L^2(\c S)}+1\rb,
 \end{align*}
 so that
 \begin{equation*}
  \lA a\rA^2_{H^{s-1}(\c S)}\leq \c F\lp\lA v\rA_{H^{s-\frac12}(\Omega)}\rp\lb1+E\rb.
 \end{equation*}
Similarly,
 \begin{equation*}
  \lA\D_ta\rA^2_{H^{s-\frac32}(\c S)}\leq \c F\lp\lA v\rA_{H^{s-\frac12}(\Omega)}\rp\lb1+E\rb.
 \end{equation*}

 Next, we want to control~$\nabla p$ and~$\D_t\nabla p$ with those quantities. To avoid the apparition of~$\la\c S\ra_s$ in the right-hand side, we use the div-curl problems
 \begin{equation*} 	
	\lB
	\begin{aligned}
		&\nabla\cdot \nabla p=-\tr\lp(Dv)^2\rp\text{ in }\Omega,\\
		&\nabla\times \nabla p=0\text{ in }\Omega,\\
		&(\ls\nabla\nabla p,N\rs^\sharp)^\top=-(\nabla a)^\top \text{ on }\c S,\\
		&\ls\nabla p,\nu\rs=-\Pi_{\c M}(v,v)-g\nu^n\text{ on }\c B;
	\end{aligned}
	\right.
\end{equation*}
and
\begin{equation*} 	
	\lB
	\begin{aligned}
		&\nabla\cdot\D_t\nabla p=3\tr\lp D^2p\cdot Dv\rp+2\tr\lp(Dv)^3\rp\text{ in }\Omega,\\
		&\nabla\times\D_t\nabla p=D^2p\cdot Dv-(Dv)^*\cdot D^2p\text{ in }\Omega,\\
		&\ls\D_t\nabla p,N\rs=-\D_ta \text{ on }\c S,\\
		&\ls\D_t\nabla p,\nu\rs=3\Pi_{\c M}(\nabla^\top p,v)+\ls D^2\nu(v,v),v\rs+g\Pi_{\c M}(v,e_n)\text{ on }\c B.
	\end{aligned}
	\right.
\end{equation*}

If~$s$ was greater than~$3/2+n/2$, we could use the information~$v\in H^{s-\frac12}(\Omega)$ directly to conclude. However, here with~$s>1+n/2$, this information only gives~$\tr\lp (Dv)^3\rp\in H^{s-\frac52+}(\Omega)$ for example, and this is not enough to conclude. We therefore need to implement a bootstrap procedure. We see form the above elliptic problems that if~$s-\frac12<\sigma\leq s$, and if we choose~$\epsilon\leq\sigma-s+\frac12$, 
\begin{equation}
 \lA\nabla p\rA_{H^{\sigma-\frac12}(\Omega)}+\lA\D_t\nabla p\rA_{H^{\sigma-1}(\Omega)}\leq\c F\lp\lA v\rA_{H^{\sigma-\eps}(\Omega)},E\rp.
\end{equation}
Here we have first proved the estimate on~$\nabla p$, then used it to prove the one on~$\D_t\nabla p$.

Then we use the problem satisfied by~$v$,
\begin{equation*} 	
	\lB
	\begin{aligned}
		&\nabla\cdot v=0\text{ in }\Omega,\\
		&\nabla\times v=\mu\text{ in }\Omega,\\
		&(\langle\nabla v,N\rangle^\sharp)^\top=\frac1a(\D_t\nabla p)^\top \text{ on }\c S,\\
		&\langle v,\nu\rangle=0\text{ on }\c B,
	\end{aligned}
	\right.
\end{equation*}
to deduce
\begin{equation*}
 \lA v\rA_{H^\sigma(\Omega)}\leq\c F\lp\lA v\rA_{H^{s-\frac12}(\Omega)}\rp\lb1+E^{\frac12}+\lA\D_t\nabla p\rA_{H^{\sigma-\frac12}(\Omega)}\rb.
\end{equation*}
A simple bootstrap procedure closes the estimates: we find
\begin{equation}
 \lA\nabla p\rA_{H^{s-\frac12}(\Omega)}+\lA\D_t\nabla p\rA_{H^{s-1}(\Omega)}+\lA v\rA_{H^s(\Omega)}\leq\c F\lp\lA v\rA_{H^{s-\frac12}(\Omega)},E\rp.
\end{equation}

To control the regularity of~$\c S$, we use the formula
\begin{equation*}
 \Delta p=\Delta_{\c S}p-\kappa\nab Np+D^2p(N,N),
\end{equation*}
so that, since~$p\rvert_{\c S}=0$,
\begin{equation*}
 \kappa=\frac1a\lp\Delta p-D^2p(N,N)\rp.
\end{equation*}
Thus we can conclude that
\begin{equation}
 \lA\kappa\rA_{H^{s-2}(\c S)}\leq\c F\lp\lA v\rA_{H^{s-\frac12}(\Omega)},E\rp.
\end{equation}
If~$n=2$, this is enough. For~$n\geq3$, we need also to control~$\kappa_l$. The same formula as above, seeing~$\c L$ as the boundary of~$\c B$ with exterior normal~$n$ gives 
\begin{equation*}
 \kappa_l=-\frac1{\nab np}\lp\Delta_{\c B}p-D^2p(n,n)\rp=\frac1{\ls n,N\rs a}\lp\Delta_{\c B}p-D^2p(n,n)\rp,
\end{equation*}
where we have used that~$p=0$ on~$\c S$ to write~$\nab np=\ls N,n\rs\nab Np$. Observing that~$\ls n,N\rs$ is bounded from below because~$\frac\pi2>\omega\geq\underline{\omega}>0$, we conclude
\begin{equation}
 \lA\kappa_l\rA_{H^{s-\frac52}(\c L)}\leq\c F\lp\lA v\rA_{H^{s-\frac12}(\Omega)},E\rp.
\end{equation}
Here the traces make sense because~$s>\frac52$ when~$n\geq3$.
This concludes the proof.

\end{proof}

The last proposition is the control on the energy. We need to use a control neighborhood both in~$\c S$, which is the role of~$\Lambda_*$, and in~$v\in H^{s-\frac12}(\Omega_t)$.
\begin{prop}	\label{prop:Gron}
 Let~$s>1+\frac n2$.
 Let~$\c S_*$ be an~$H^s$ reference hypersurface, and~$\delta$ small enough, so that in the corresponding~$\Lambda_*$, the maximal angle satisfies~$s<\frac12+\frac\pi{2\overline\omega}$, and all other geometric conditions imposed above apply. Let~$A>0$.
 
 Take~$\c S_t$  a~$C^2$ family of~$H^s$ hypersurfaces  so that~$\c S_0\in\Lambda_*$, and~$v\in C^2(H^s(\Omega_t))$, satisfying~$\lA v(0)\rA_{H^{s-\frac12}(\Omega_0)}<A$. Assume that~$(\c S_t,v)$ are solutions of the equations, and that the Taylor condition~$a(t,\cdot)\geq a_0>0$ is satisfied.
 
 Then there exists a time~$T>0$, depending only on~$\Lambda_*$, $A$, $a_0$, $\underline{\omega}$, $\la\c S_0\ra_s$ and~$\lA v(0,\cdot)\rA_{H^s(\Omega_0)}$, so that for all times~$t\in[0,T]$, $\c S_t\in\Lambda_*$, $\lA v(t)\rA_{H^{s-\frac12}(\Omega_t)}<A$, and the energy~$E$ satisfies
 \[E(t)\leq E(0)+\int_0^t\c F\lp E(t')\rp\d t',\]
 where~$\c F$ is an increasing function of its argument, and depends only on~$\Lambda_*,A,a_0$, and~$\underline{\omega}$.
\end{prop}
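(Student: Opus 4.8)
The plan is a continuation argument whose engine is a Grönwall inequality for~$E$, obtained by differentiating~$E$ in time, using the quasilinear identity of Proposition~\ref{prop:quasilin}, and feeding the top-order norms back in through Proposition~\ref{prop:controlE}. First I would set up the continuation: since~$\c S_0\in\Lambda_*$ (an open set), $\lA v(0)\rA_{H^{s-\frac12}(\Omega_0)}<A$, and~$t\mapsto(\c S_t,v(t))$ is~$C^2$, there is~$T_1>0$ with~$\c S_t\in\Lambda_*$ and~$\lA v(t)\rA_{H^{s-\frac12}(\Omega_t)}<A$ for~$t\in[0,T_1]$; let~$T^*$ be the supremum of such~$T_1$. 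On~$[0,T^*)$ all the machinery applies with constants uniform in~$\Lambda_*$ and depending on~$a_0,\underline\omega$: Theorem~\ref{thm:ellreg}, Propositions~\ref{prop:quasilin} and~\ref{prop:controlE}, and the bound~$\lA v\rA_{H^{s-\frac12}}<A$ may be used freely. The goal is~$\tder E\le\c F(E)$ on~$[0,T^*)$ with~$\c F$ depending only on~$\Lambda_*,A,a_0,\underline\omega$, and then~$T^*$ bounded below by a time~$T$ of the asserted dependence.

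Next, differentiate~$E$. Since~$v\in C^2(H^s(\Omega_t))$ and~$\D_t^2a=-a\c Na+R$ with~$R\in C(H^{s-\frac32}(\c S_t))$, the function~$t\mapsto E(t)$ is~$C^1$ and~$\tder E$ is a sum of: (i) terms from the evolution of the surface measure, $\D_t(\d S)=(\dive_{\c S_t}v)\d S$ with~$\dive_{\c S_t}v=-\ls\nab Nv,N\rs\in H^{s-\frac32}(\c S_t)\hookrightarrow L^\infty(\c S_t)$, all~$\les E$; (ii) terms carrying the time commutators~$[\D_t,\c N^{s-\frac32}]$, $[\D_t,\c N^{s-1}]$ and, inside~$\tder\lA\mu\rA^2_{H^{s-1}(\Omega_t)}$, the commutator with the moving-domain norm; here one uses the shape derivative of the Dirichlet to Neumann map, which by Theorem~\ref{thm:ellreg} makes~$[\D_t,\c N^\sigma]$ behave like an operator of order~$\sigma$ with coefficients controlled by~$Q(\la\c S_t\ra_s,\lA v\rA_{H^s(\Omega_t)})$, together with the fact that~$\mu$ is transported by~$v$ along the Euler flow (a conservation law), so these terms are~$\les Q(\la\c S_t\ra_s,\lA v\rA_{H^s(\Omega_t)})$; and (iii) the main terms, obtained by letting~$\D_t$ hit the~$\D_ta$, the weight~$a$, and the~$\c N^{s-1}a$ factors and substituting~$\D_t^2a=-a\c Na+R$, the~$R$-part contributing~$\les\c F(\lA v\rA_{H^{s-\frac12}(\Omega_t)},E)\,\lA R\rA_{H^{s-\frac32}(\c S_t)}$.

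The crux is the cancellation among the leading parts of (iii). With~$\phi=\c N^{s-\frac32}a$, $\psi=\c N^{s-\frac32}\D_ta$ and~$\tilde L:=\c N^{\frac12}a\,\c N^{\frac12}$, a self-adjoint nonnegative operator on~$L^2(\c S_t)$, one has~$\c N^{s-1}a=\c N^{\frac12}\phi$, so the first two pieces of~$E$ are exactly~$\lA\psi\rA^2_{L^2(\c S_t)}+\ls\phi,\tilde L\phi\rs$: the energy of an equation~$\D_t^2\phi+\tilde L\phi=(\text{controlled})$. The standard computation gives, modulo (i)--(ii), $\tder\lp\lA\psi\rA^2+\ls\phi,\tilde L\phi\rs\rp=2\Re\ls\psi,\c N^{s-\frac32}(-a\c Na+R)\rs+2\Re\ls\psi,\tilde L\phi\rs+\ls\phi,(\D_t\tilde L)\phi\rs+\dots$, where~$\D_t\tilde L$, built from~$\D_t\c N$ and~$\D_ta$, is of order~$\le1$ and hence contributes~$\les\c F(\lA v\rA_{H^{s-\frac12}(\Omega_t)},E)\,\lA\phi\rA^2_{H^{\frac12}(\c S_t)}\les\c F(\lA v\rA_{H^{s-\frac12}(\Omega_t)},E)$ since~$\lA\phi\rA_{H^{\frac12}(\c S_t)}\les\lA a\rA_{H^{s-1}(\c S_t)}$. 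Finally~$\tilde L\phi$ and~$\c N^{s-\frac32}(a\c Na)$ differ by an~$L^2(\c S_t)$ term: commuting~$\c N^{s-\frac32}$ with~$a$ and~$\c N^{\frac12}$ with~$a$ produces~$[\c N^{s-\frac32},a]\c Na$, $\c N^{\frac12}[\c N^{\frac12},a]\phi$ and~$[\c N,a]\phi$, each in~$L^2(\c S_t)$ with norm~$\les\c F(\lA v\rA_{H^{s-\frac12}(\Omega_t)},E)$ (using~$a\in H^{s-1}(\c S_t)$ with~$s-1>\tfrac n2$, $\c Na\in H^{s-2}$, $\phi\in H^{\frac12}$), so~$-2\Re\ls\psi,\c N^{s-\frac32}(a\c Na)\rs+2\Re\ls\psi,\tilde L\phi\rs=2\Re\ls\psi,(L^2\text{-term})\rs\les\c F(\lA v\rA_{H^{s-\frac12}(\Omega_t)},E)$ and the two leading contributions cancel. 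Collecting (i)--(iii) and invoking Proposition~\ref{prop:controlE} (so~$\la\c S_t\ra_s^2+\lA v\rA^2_{H^s(\Omega_t)}\le\c F(\lA v\rA_{H^{s-\frac12}(\Omega_t)},E)$) together with~$\lA v\rA_{H^{s-\frac12}(\Omega_t)}<A$ yields~$\tder E\le\c F(E)$, $\c F$ depending only on~$\Lambda_*,A,a_0,\underline\omega$.

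To close, integrate to~$E(t)\le E(0)+\int_0^t\c F(E(t'))\d t'$ on~$[0,T^*)$ and compare with~$\dot y=\c F(y)$, $y(0)=E(0)$: $E$ stays below a bound~$\bar E$ on~$[0,T_0]$, with~$T_0=T_0(E(0),\c F)$ and~$E(0)$ majorized by~$\la\c S_0\ra_s$ and~$\lA v(0)\rA_{H^s(\Omega_0)}$. It remains to see~$T^*\ge T$: on~$[0,T^*)$, Proposition~\ref{prop:controlE} and~$E\le\bar E$ bound~$\la\c S_t\ra_s$ and~$\lA v\rA_{H^s(\Omega_t)}$ by~$\c F(A,\bar E)$, so~$\D_tv=-\nabla p-ge_n$ with~$\lA\nabla p\rA_{H^{s-\frac12}(\Omega_t)}\le Q(\la\c S_t\ra_s,\lA v\rA_{H^{s-\frac12}})$ and~$\c S_t$ moving at normal speed~$\ls v,N\rs$ give~$\tder\lA v(t)\rA^2_{H^{s-\frac12}(\Omega_t)}\le C(A,\bar E)$ and~$\tder\lA\eta_{\c S_t}\rA^2_{H^{s-\frac12}(\c S_*)}\le C(A,\bar E)$; hence~$\lA v(t)\rA_{H^{s-\frac12}(\Omega_t)}<A$ and~$\c S_t\in\Lambda_*$ persist for a positive time fixed by~$A-\lA v(0)\rA_{H^{s-\frac12}(\Omega_0)}$, the width of~$\Lambda_*$ around~$\c S_0$, and~$C(A,\bar E)$, and~$T$ is the minimum of this time and~$T_0$. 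The genuinely delicate ingredient is (ii)--(iii): the commutator and shape-derivative estimates for~$\c N$ on the \emph{moving domain with an edge}, resting on Theorem~\ref{thm:ellreg}; the rest is the standard water-waves cancellation and routine product, commutator, and elliptic estimates.
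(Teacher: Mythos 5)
Your proposal is correct and follows essentially the same route as the paper: a continuation argument in the rough topology ($\Lambda_*$ and $\lA v\rA_{H^{s-\frac12}}<A$), differentiation of~$E$ with the cancellation of the top-order terms via the quasilinear equation of Proposition~\ref{prop:quasilin}, the commutator estimates for~$[\D_t,\c N^\sigma]$ and~$[a,\c N^\sigma]$, self-adjointness of~$\c N$, the transport equation for~$\mu$, and closure through Proposition~\ref{prop:controlE}. The only differences are presentational: you package the cancellation through the symmetrized operator~$\c N^{\frac12}a\,\c N^{\frac12}$ where the paper uses the commutator lemmas directly, and you propagate the control neighborhood by differentiating the rough norms where the paper runs the equivalent estimate through the Lagrangian flow map.
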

Remark that in fact, $\Lambda_*$ and $A$ can be chosen depending only on the data. Also because the~$L^\infty$ evolution of~$a$ is controlled by the evolution of~$\c S\in\Lambda_*$ and~$v$ in~$H^{s-\frac12}$, it is easily seen that~$a_0$ can be chosen depending only on the initial data. Thus at the end the time of validity~$T$ of the Proposition only depends on the norms of the initial data. We do not write the Proposition in this way, since the point is that the function~$\c F$ in the control of the energy is uniform in a neighborhood of the the initial data in a rougher topology.

\begin{proof}
The equation being quasilinear, one needs to use control neighborhoods in rougher topologies.
\paragraph{Control neighborhoods.}
First we prove that~$\c S$ stays in~$\Lambda_*$ and~$\lA v\rA_{H^{s-\frac12}(\Omega_t)}$ stays less that~$A$ for a short time. This rests on an estimate of the Lagrangian map~$u(t,\cdot):\Omega_0\rightarrow\Omega_t$, which is the solution of
 \[\partial_tu(t,y)=v(t,u(t,y))\]
 with initial data~$u(0)=I$. It is immediate that
 \[\lA u(t,\cdot)-I\rA_{H^s(\Omega_0)}\leq C\int_0^t\lA v(t',\cdot)\rA_{H^s(\Omega_t)}\lA u(t',\cdot)\rA_{H^s(\Omega_0)}^s\d t',\]
 by taking the duality product with any test function~$f\in H^{-s}(\Omega_0)$ and writing the ODE in integral form.
 
 Take a large~$\mu$ to be chosen later, and
 \[t_0=\mathrm{sup}\lB t;\lA v(t',\cdot)\rA_{H^s(\Omega_t)}+\la\c S_{t'}\ra_s<\mu,\forall t'\in[0,t]\rB.\]
 This time is positive because of the continuity of the solution. Then 
 \[\lA u(t,\cdot)-I\rA_{H^s(\Omega_0)}\leq\mu\int_0^t\lA u(t',\cdot)\rA^s_{H^s(\Omega_0)}\d t'.\]
 Thus by ODE estimates, we find a time and a constant~$C$ depending only on~$\mu$, such that
 \[\lA u(t,\cdot)-I\rA_{H^s(\Omega_0)}\leq Ct.\]
 This implies that in local coordinates, the function~$\eta$ that parametrized~$\c S$ above~$\c S_*$ grows linearly, so that there is a time~$t_1$ depending only on~$\mu$, $\Lambda_*$ and the norms of the initial data, such that~$\c S$ stays in~$\Lambda_*$ for~$0\leq t\leq\mathrm{min}\lB t_0,t_1\rB$.
 
 The same construction, using~$u_t=v(t,u(t))$ and~$u_{tt}=(\nabla p-ge_n)\circ u$, gives that for a time~$t_2$ depending only on~$\mu$, $\Lambda_*$, $A$ and the norms of the initial data, if~$0\leq t\leq\mathrm{min}\lB t_0,t_1,t_2\rB$ then~$\lA v(t)\rA_{H^{s-\frac12}(\Omega)}<A$.

 \paragraph{Evolution of the Curl.}
 The evolution of~$\mu=Dv-(Dv)^*$ can easily be computed to be
 \begin{equation}
  \D_t\mu=-(Dv)^*\mu-\mu Dv.
 \end{equation}
 Then it is easy to obtain energy estimates,
 \begin{equation}
  \frac{\d}{\d t}\lA\mu\rA_{H^{s-1}(\Omega_t)}\leq C\lA v\rA_{H^s(\Omega_t)}\lA\mu\rA^2_{H^{s-1}(\Omega_t)}.
 \end{equation}

 \paragraph{Commutators.}
 We need to compute the commutator between~$\D_t$ and powers of~$\c N$. The one we need is
 \begin{equation}	\label{eq:comdtN}
  \lA\lb\D_t,\c N^\sigma\rb\rA_{L(H^\sigma(\c S_t);L^2(\c S_t))}\leq C\lA v\rA_{H^s(\Omega_t)},
 \end{equation}
 for~$\frac12\leq\sigma\leq s-1$, with~$C$ depending only on~$\Lambda_*$. This will be a consequence of
 \begin{equation*}
  \lA\lb\D_t,\c N\rb\rA_{L(H^r(\c S_t);H^{r-1}(\c S_t))}\leq C\lA v\rA_{H^s(\Omega_t)},
 \end{equation*}
for~$r\in[1/2,s-1/2]$, which can be proven for~$r>1$ by writing the commutator formula, and for~$r=1/2$ by weak formulation.
Then one can use the formula
\begin{equation*}
 \lb\D_t,\c N^{k+1}\rb=\lb\D_t,\c N\rb\c N^k+\c N\lb\D_t,\c N^k\rb
\end{equation*}
and interpolation to conclude.

We will also need the commutator between~$\c N$ and~$a$, 
\begin{equation}	\label{eq:comaN}
 \lA\lb a,\c N^{s-2}\rb\rA_{L(H^{s-2}(\c S_t),H^{\frac12}(\c S_t))}\leq C\lA a\rA_{H^{s-1}(\c S_t)},
\end{equation}
which can again be proven by interpolation between integer powers, those one being computed explicitly.

\paragraph{Evolution of the energy.}

Now one can tackle the evolution of the other two terms in the energy. We write
\[E_1=\int_{\c S_t}\la\c N^{s-\frac32}\D_ta\ra^2\d S\]
and
\[E_2=\int_{\c S_t}a\la\c N^{s-1}a\ra^2\d S.\]
We recall that for a function~$f$ defined on~$\c S_t$,
\begin{equation}	\label{eq:EvSur}
 \frac{\d}{\d t}\int_{\c S_t}f \d S=\int_{\c S_t}\lp\D_t f+f(\c D\cdot v^\top-\kappa v^\perp)\rp\d S,
\end{equation}

and because~$v$ is divergence-free,
\[\c D\cdot v^\top-\kappa v^\perp=\ls\nab Nv,N\rs\in H^{s-\frac32}(\c S_t)\subset L^\infty(\c S_t).\]
Thus this second term is harmless in the estimates.

First, we prove that
\begin{equation}	\label{eq:E1}
 \la\frac{\d}{\d t}E_1-\ls\c N^{s-\frac32}(\D_ta),\c N^{s-\frac32}(a\c Na)\rs_{L^2(\c S_t)}\ra\leq\c F\lp\lA v\rA_{H^s(\Omega_t)},\la\c S_t\ra_s\rp.
 \end{equation}
Since~$\D_ta\in H^{s-\frac32}(\c S_t)\subset L^\infty(\c S_t)$, we have
\[ \la\frac{\d}{\d t}E_1-\ls\D_t\c N^{s-1}(a),a\c N^{s-1}(a)\rs_{L^2(\c S_t)}\ra\leq\c F\lp\lA v\rA_{H^s(\Omega_t)},\la\c S_t\ra_s\rp,\]
and~\eqref{eq:E1} is a consequence of the commutators estimates~\eqref{eq:comdtN} and~\eqref{eq:comaN}, and of the self-adjointness of powers of~$\c N$.

Along the same lines, the commutator estimate~\eqref{eq:comdtN} proves
\begin{equation}	\label{eq:E2}
 \la\frac{\d}{\d t}E_2-\ls\c N^{s-\frac32}(\D_ta),\c N^{s-\frac32}(\D_t^2a)\rs_{L^2(\c S_t)}\ra\leq\c F\lp\lA v\rA_{H^s(\Omega_t)},\la\c S_t\ra_s\rp.
 \end{equation}
 
 Thus, using Proposition~\ref{prop:quasilin} on the equation satisfied by~$a$, we conclude 
 \[\la\frac{\d}{\d t}E\ra\leq\c F\lp\lA v\rA_{H^s(\Omega_t)},\la\c S_t\ra_s\rp.\]
 Using Proposition~\ref{prop:controlE} to control the right-hand side by a function of~$E$, we conclude the inequality of our Proposition on the interval of time~$[0,\mathrm{min}\lB t_0,t_1,t_2\rB]$. Then if we choose~$\mu$ big enough depending only on the initial data and the control neighborhoods, the control of the energy implies that~$t_0$ is bounded from below by a time~$t_*$ depending only on the initial data. Also since we have fixed~$\mu$, $t_1$ and~$t_2$ only depend on the initial data. Therefore the control is valid up to a time~$T$ as in the Proposition. 
\end{proof}

\bibliographystyle{hacm}
\bibliography{Beach}

\end{document}